\tikzset{->-/.style={decoration={
			markings,
			mark=at position #1 with {\arrow{>}}},postaction={decorate}}}
\tikzset{-<-/.style={decoration={
					markings,
					mark=at position #1 with {\arrow{<}}},postaction={decorate}}}
\DeclareMathOperator{\map}{map}
\DeclareMathOperator{\mapd}{\map_{\partial}}
\DeclareMathOperator{\coker}{coker}
\DeclareMathOperator{\Diff}{Diff}
\DeclareMathOperator{\Diffd}{\Diff_{\partial}}
\DeclareMathOperator{\ev}{ev}
\DeclareMathOperator{\Aut}{Aut}
\DeclareMathOperator{\Inn}{Inn}
\DeclareMathOperator{\Out}{Out}
\DeclareMathOperator{\Sym}{Sym}
\DeclareMathOperator{\glnz}{GL_n(\Z)}
\DeclareMathOperator{\GL}{GL}
\DeclareMathOperator{\Sp}{Sp}
\DeclareMathOperator{\sptg}{\Sp_{2g}(\Z)}
\DeclareMathOperator{\im}{im}
\DeclareMathOperator{\id}{id}
\DeclareMathOperator{\Q}{\mathbb{Q}}
\DeclareMathOperator{\Z}{\mathbb{Z}}
\DeclareMathOperator{\A}{\mathcal{A}}
\DeclareMathOperator{\F}{\mathcal{F}}
\DeclareMathOperator{\R}{\mathbb{R}}
\DeclareMathOperator{\Tot}{Tot}
\DeclareMathOperator{\gr}{gr}
\DeclareMathOperator{\Th}{Th}
\DeclareMathOperator{\MCG}{MCG}
\mathchardef\ordinarycolon\mathcode`\:
\theoremstyle{plain}
\newtheorem{theorem}{Theorem}[section]
\newtheorem{proposition}[theorem]{Proposition}
\newtheorem{conjecture}[theorem]{Conjecture}
\newtheorem{lemma}[theorem]{Lemma}
\newtheorem*{lemma*}{Lemma}
\newtheorem{corollary}[theorem]{Corollary}
\newtheorem*{corollary*}{Corollary}
\theoremstyle{definition}
\newtheorem{definition}[theorem]{Definition}
\newtheorem{question}[theorem]{Question}
\newtheorem{notation}[theorem]{Notation}
\newtheorem{example}[theorem]{Example}
\newtheorem{claim}[theorem]{Claim}
\newtheorem*{claim*}{Claim}
\newtheorem*{comment*}{Comment}
\theoremstyle{remark}
\newtheorem{remark}[theorem]{Remark}
\newtheorem*{remark*}{Remark}
\numberwithin{equation}{section}
\title[]{Cohomology of configuration spaces of surfaces as mapping class group representations}
\author{Andreas Stavrou}
\email{as2558@cam.ac.uk}
\address{Centre for Mathematical Sciences, Wilberforce Road, Cambridge CB3 0WB, UK}
\date{\today}
\begin{document}
\maketitle
\begin{abstract}
    We express the rational cohomology of the unordered configuration space of a compact oriented manifold as a representation of its mapping class group in terms of a weight-decomposition of the rational cohomology of the mapping space from the manifold to a sphere. We apply this to the case of a compact oriented surface with one boundary component and explicitly compute the rational cohomology of its unordered configuration space as a representation of its mapping class group. In particular, this representation is not symplectic, but has trivial action of the second Johnson filtration subgroup of the mapping class group. 
\end{abstract}
\section{Introduction}\label{sec:Introduction}
The \textit{ordered configuration space} of a manifold $M$ is the space $$F_n(M)=\{(x_1,...,x_n)\in \mathring{M}^n:x_i\neq x_j \text{ if } i\neq j\}$$ of $n$ distinct points in the \textit{interior} of $M$, topologised as a subspace of $M^n$.
The symmetric group $\mathfrak{S}_n$ acts on $F_n(M)$ by permuting the coordinates, and the \textit{(unordered) configuration space} of $M$ is the quotient $$C_n(M)=F_n(M)/\mathfrak{S}_n.$$ Henceforth, we will assume that the manifold $M$ is \textit{compact, connected} and \textit{oriented} of dimension $d$, possibly with boundary. Furthermore, all (co)homology will be with $\Q$ coefficients unless explicitly stated. 

The \textit{diffeomorphism group} $\Diffd(M)$ is the group of orientation preserving diffeomorphisms of $M$ pointwise preserving a neighbourhood of the boundary $\partial M$, with the Whitney $C^{\infty}$-topology. The \textit{mapping class group} $\MCG(M)$ of $M$ is the group of connected components $\pi_0(\Diffd(M))$, i.e. diffeomorphisms up to isotopy.

The group $\Diffd(M)$ acts naturally on $C_n(M)$ giving rise to an action of $\MCG(M)$ on the homology $H_*(C_n(M))$. We are interested in
\begin{question}
What is $H_*(C_n(M))$ as a $\MCG(M)$-representation?
\label{qu:thequestion}
\end{question}

\subsection{General manifolds}
We reduce Question \ref{qu:thequestion} to computing the rational cohomology of the space $\mapd(M,S^{d+2m}_{\Q})$ of continuous maps of pairs $(M,\partial M)\to (S^{d+2m}_{\Q},*)$. Here $S^{d+2m}_{\Q}$ is the rationalisation of the sphere $S^{d+2m}$. We will require the computation of two additional pieces of structure on $H^*(\mapd(M,S^{d+2m}_{\Q}))$:
\begin{enumerate}
    \item the $\MCG(M)$ action arising from the natural precomposition action of $\Diffd(M)$ on $\mapd(M,S^{d+2m}_{\Q})$ and
    
    \item the \textit{sphere action}, i.e. the action of the group
\begin{equation*}
    (\Q^*,\times)=\pi_0(\map_*(S^{d+2m}_{\Q},S^{d+2m}_{\Q}), \circ)^{\times}
\end{equation*}
on $H^*(\mapd(M,S^{d+2m}_{\Q}))$ arising from postcomposition of maps.
\end{enumerate}
For $k\in\Z$, the $k$-weightspace, $V^{(k)}$, of a $\Q^*$-representation $V$ is the subspace of $V$ on which every $q\in \Q^*$ acts via scalar multiplication by $q^k$.

\begin{theorem}[Theorem \ref{thm:maintheoremforgeneralmanifolds}]\label{thm:introgeneralmanifolds}
Let $d=\dim M$ and $m\ge 1$. Under the sphere action, $H^{*}(\map_{\partial}(M,S^{d+2m}_{\Q}))$ splits as a direct sum of its weightspaces and  there is a $\MCG(M)$-equivariant isomorphism
\begin{equation}
    H^j(C_k(M))\cong \widetilde{H}^{j+2mk}(\map_{\partial}(M,S^{d+2m}_{\Q}))^{(k)}.
\end{equation}
\end{theorem}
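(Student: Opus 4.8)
The plan is to route the isomorphism through the \emph{labelled configuration space} $C(M;S^{2m}_{\Q})$ of finite subsets of $\mathring M$ carrying labels in the rationalised even sphere $S^{2m}_{\Q}$ (a point whose label is the basepoint disappears), on which $\Q^*$ acts by applying a degree-$q$ self-map to every label. I would prove two facts and compose them. \emph{Fact A:} $\widetilde H^{*}(C(M;S^{2m}_{\Q}))$ splits into $\Q^*$-weightspaces, and $H^{j}(C_k(M))\cong\widetilde H^{j+2mk}(C(M;S^{2m}_{\Q}))^{(k)}$ for $k\ge1$, $\MCG(M)$-equivariantly. \emph{Fact B:} the scanning map induces a $\Diffd(M)$-equivariant rational homotopy equivalence $C(M;S^{2m}_{\Q})\simeq\mapd(M,S^{d+2m}_{\Q})$ intertwining the $\Q^*$-action on labels with the postcomposition (sphere) action. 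Transporting Fact A along Fact B gives the theorem (the case $k=0$ is vacuous, $C_0(M)$ being a point and $\widetilde H^{0}$ of the connected space $\mapd(M,S^{d+2m}_{\Q})$ vanishing).

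For Fact A I would filter $C(M;S^{2m}_{\Q})$ by the number of non-basepoint labels. Since $\mathfrak{S}_k$ acts freely on $F_k(\mathring M)$, the $k$-th subquotient $F_k(\mathring M)_+\wedge_{\mathfrak{S}_k}(S^{2m}_{\Q})^{\wedge k}$ is literally the Thom space of the vector bundle $F_k(\mathring M)\times_{\mathfrak{S}_k}(\R^{2m})^{k}\to C_k(\mathring M)=C_k(M)$, which is \emph{orientable} because $2m$ is even (a transposition acts on $(\R^{2m})^{k}$ with determinant $+1$); hence by the Thom isomorphism $\widetilde H^{*}(F_k(\mathring M)_+\wedge_{\mathfrak{S}_k}(S^{2m}_{\Q})^{\wedge k})\cong H^{*-2mk}(C_k(M))$, with $q\in\Q^*$ acting by the scalar $q^{k}$ (it acts on $(S^{2m}_{\Q})^{\wedge k}$ by a degree-$q^{k}$ map). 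Because $M$ is compact, each $C_k(M)$ is of finite type and only finitely many $k$ contribute in a fixed total degree, so the filtration spectral sequence $E_1^{k}\cong H^{*-2mk}(C_k(M))\Rightarrow H^{*}(C(M;S^{2m}_{\Q}))$ is finite in each degree. The operator ``$q=2$'' is compatible with this spectral sequence and acts on $E_r^{k}$ by $2^{k}$; since its differentials shift $k$, they land between eigenspaces of distinct eigenvalues and therefore vanish, so the spectral sequence degenerates; and the resulting filtered action of ``$q=2$'' on $H^{n}$, having distinct eigenvalues $2^{k}$ on the associated graded, is diagonalisable with eigenspaces canonically the graded pieces. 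This is the weight splitting, with $\widetilde H^{n}(C(M;S^{2m}_{\Q}))^{(k)}\cong H^{n-2mk}(C_k(M))$. All the ingredients (the space, its filtration, the bundles, the $\Q^*$-action) are functorial for $\Diffd(M)$ acting through $\mathring M$, so the splitting and the isomorphism are $\Diffd(M)$- and hence $\MCG(M)$-equivariant.

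For Fact B I would invoke the scanning map of Segal and McDuff in the form valid for \emph{connected} labels (here $S^{2m}_{\Q}$, using $m\ge1$): it is a homotopy equivalence, natural in the manifold and in the label space, from $C(M;S^{2m}_{\Q})$ onto $\Gamma_{\partial}(\dot{TM}\wedge_M S^{2m}_{\Q})$, the space of sections, trivial near $\partial M$, of the fibrewise smash of the fibrewise one-point compactification of $TM$ with $S^{2m}_{\Q}$. It remains to replace this bundle of rational $(d+2m)$-spheres by the trivial one. It is classified by a map $M\to B\mathrm{haut}_\bullet(S^{d+2m}_{\Q})$ that, because $M$ is oriented, factors through $B\mathrm{SO}(d)$, and the underlying map $\mathrm{SO}(d)\to\mathrm{haut}_\bullet(S^{d+2m}_{\Q})$ is rationally nullhomotopic: trivially on $\pi_0$, and on higher homotopy because it factors as $\pi_{*}(\mathrm{SO}(d))\to\pi_{*+d}(S^{d})\xrightarrow{\ \Sigma^{2m}\ }\pi_{*+d+2m}(S^{d+2m})\to\pi_{*+d+2m}(S^{d+2m})\otimes\Q$, and for $m\ge1$ the last $\Q$-vector space is nonzero only in degree $*=d+2m-1$, where the composite passes through the finite group $\pi_{2d+2m-1}(S^{d})$. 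Hence the bundle is fibre-homotopy trivial after rationalising fibres, giving $\Gamma_{\partial}(\dot{TM}\wedge_M S^{2m}_{\Q})\simeq\mapd(M,S^{d+2m}_{\Q})$ compatibly with postcomposition by degree maps; composing with scanning yields Fact B.

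The main obstacle is carrying Fact B out \emph{$\MCG(M)$-equivariantly}, since a fibrewise trivialisation of $\dot{TM}\wedge_M S^{2m}_{\Q}$ is a choice and is not $\Diffd(M)$-invariant. I would resolve this by performing scanning fibrewise over $B\Diffd(M)$, i.e. over the total space $E$ of the universal $M$-bundle with its vertical tangent bundle: scanning globalises by naturality, the classifying map $E\to B\mathrm{haut}_\bullet(S^{d+2m}_{\Q})$ of the fibrewise sphere bundle still factors through $B\mathrm{SO}(d)$, and the \emph{same} rational nullhomotopy of $B\mathrm{SO}(d)\to B\mathrm{haut}_\bullet(S^{d+2m}_{\Q})$ --- a statement about $B\mathrm{SO}(d)$ alone --- trivialises it over all of $E$, yielding a $\Diffd(M)$-equivariant isomorphism on rational cohomology. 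The points demanding genuine care are the identification of $\mathrm{haut}_\bullet(S^{d+2m}_{\Q})$ (homotopy-discrete when $d+2m$ is odd, with identity component $K(\Q,d+2m-1)$ when $d+2m$ is even) and the primitivity-and-finiteness argument showing the displayed composite is rationally null; together with bookkeeping the $\Q^*$-action through the globalised picture, this is where essentially all the work lies.
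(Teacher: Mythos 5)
Your overall architecture parallels the paper's (configuration space $\to$ section space of $\tau^+M\wedge_f S^{2m}_{\Q}$ $\to$ mapping space), but both halves are implemented differently, and the first half is fine: where the paper cites the $\MCG(M)\times\Q^*$-equivariant stable splitting of Tillmann--Thorpe and then identifies $\widetilde H^*(D_k(M;S^{2m}))$ via the Thom isomorphism for $F_k(M)\times_{\mathfrak{S}_k}(\R^{2m})^k$, you use unstable Segal--McDuff scanning plus the filtration of $C(M;S^{2m}_{\Q})$ by cardinality, forcing degeneration of the filtration spectral sequence because the differentials intertwine distinct eigenvalues $2^k$ of the label action. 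That eigenvalue argument is a legitimate substitute for the stable splitting (the paper's Claim in Theorem 3.2 is exactly your Thom-space weight computation), and your proposal to run scanning fibrewise over $B\Diffd(M)$ is a reasonable way to get the monodromy ($=\MCG(M)$) equivariance that strict naturality does not give.

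The genuine gap is in your untwisting step when $d+2m$ is even (which is exactly the case of interest here, e.g.\ $d=2$): you infer fibre-homotopy triviality of $\gamma_d^+\wedge_f S^{2m}_{\Q}$ over $BSO(d)$ from the vanishing of the composite $\pi_*(SO(d))\to\pi_{*+d+2m}(S^{d+2m})\otimes\Q$. But triviality requires the classifying map $BSO(d)\to BhAut^1_*(S^{d+2m}_{\Q})\simeq_{\Q} K(\Q,d+2m)$ to be nullhomotopic, i.e.\ the vanishing of a class in $H^{d+2m}(BSO(d);\Q)$, and vanishing of the looped map on rational homotopy only kills the primitive/suspended part of that class: a decomposable obstruction (a polynomial in Pontryagin classes, say $p_1^{m+d/2}$) is invisible to your argument, so the inference ``null on $\pi_*\otimes\Q$ of $SO(d)\to hAut_*$, hence the bundle is trivial'' does not follow as stated. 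The conclusion is true, but you must argue it differently: either identify the rational obstruction concretely --- for a fibrewise compactified vector bundle the Thom class satisfies $u^2=\pi^*e\cdot u$, so the invariant is the Euler class of $\gamma_d\oplus\R^{2m}$, which is zero --- or do what the paper does, namely split $S^{d+2m}_{\Q}\simeq S^{d+k}_{\Q}\wedge S^{2m-k}_{\Q}$ with $d+k$ odd, so that $BhAut^1_*(S^{d+k}_{\Q})$ is rationally \emph{contractible}; the latter gives not only existence but uniqueness (up to homotopy) of the nullhomotopy, which is how the paper obtains $\MCG(M)$-equivariance without working over $B\Diffd(M)$. If you keep your route, you must also make sure the trivialisation you use over the universal total space is the one pulled back from a fixed nullhomotopy over $BSO(d)$ (as you indicate), since otherwise the identification of the section space with $\mapd(M,S^{d+2m}_{\Q})$ is ambiguous up to $H^{d+2m-1}(-;\Q)$ and equivariance could fail.
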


\subsection{Surfaces}
We will apply Theorem \ref{thm:introgeneralmanifolds} to answer Question \ref{qu:thequestion} for $M=\Sigma_{g,1}$, a compact oriented surface of genus $g\ge 1$ with one boundary component. In this case, the mapping class group $\Gamma_{g,1}=\MCG(\Sigma_{g,1})$ is a group with much interest for its connection with moduli spaces of Riemann surfaces, and thus algebraic geometry and theoretical physics. 

We took inspiration from previous works concerning Question \ref{qu:thequestion} for $\Sigma_{g,1}$. These include the computation of $H^*(C_n(\Sigma_{g,1}))$ as vector spaces in \cite{bodigheimer88rationalcohomologyofsurfaces}, a complete answer to Question \ref{qu:thequestion} for with $\Z_2$-coefficients in \cite{Bianchi2020}, and, most recently, some results with $\Q$-coefficients but \textit{ordered} configurations $F_n(\Sigma_{g,1})$ in \cite{bianchi2021mapping}. For closed surfaces $\Sigma_g$, a partial answer to Question \ref{qu:thequestion} is in \cite{pagaria2019cohomology}, with a proof that the $\MCG(\Sigma_g)$-action is not symplectic (see definition below) in \cite{looijenga2020torelli}. On the latter we comment in Section \ref{sec:closedsurfaces}.

\subsubsection{$\Gamma_{g,1}$-representations}
We fix $g\ge1$ and recall some representation theory of $\Gamma_{g,1}$. The action of $\Gamma_{g,1}$ on $H_{\Z}:=H_1(\Sigma_{g,1};\Z)\simeq\Z^{2g}$ preserves the intersection product $\lambda_{\Z}:\Lambda^2H_{\Z}\to\Z$ and thus induces the short exact sequence of groups \begin{equation}
    1\to I_{g,1}\to \Gamma_{g,1}\to \sptg\to 1,
\end{equation} 
where the \textit{symplectic group} $\sptg$ is the subgroup of the linear group $\GL_{2g}(\Z)$ preserving $\lambda_{\Z}$, and the \textit{Torelli group} $I_{g,1}$ is the kernel of this action. The rationalisation $H:=H_1(\Sigma_{g,1};\Q)\simeq \Q^{2g}$ is the \textit{standard symplectic representation} of $\Gamma_{g,1}$. The symplectic form $\lambda=\lambda_{\Z}\otimes \Q$ is $\Gamma_{g,1}$-invariant, so provides a $\Gamma_{g,1}$-equivariant isomorphism $H\cong H^{\vee}$, under which $\lambda$ dualises itself to an invariant tensor $\omega\in \Lambda^2H$. Any $\Gamma_{g,1}$-representation that factors through $\sptg$, or equivalently with trivial $I_{g,1}$-action, is called \textit{symplectic}. The standard representation $H$ is symplectic, and so are all the subquotients of its tensor powers.

We fix a \textit{standard symplectic basis} $\alpha_1,...,\alpha_{2g}\in \pi_1(\Sigma_{g,1})=\Z^{*2g}$, write $[-]:\pi_1(\Sigma_{g,1})\to H$ the abelianisation and obtain the standard symplectic basis $e_i:=[\alpha_i]$ of $H$. In this notation $\omega=e_1\wedge e_2+...+e_{2g-1}\wedge e_{2g}\in \Lambda^2H$. 

The \textit{content} (Definition \ref{def:content}) is the function $$c:\pi_1(\Sigma_{g,1})\to \Lambda^2H$$ defined on each word $w=w_1w_2...w_k\in\pi_1(\Sigma_{g,1})$, in the alphabet $\{\alpha_i\}$, by
\begin{equation}
    c(w)=\sum_{\mathclap{1\le i<j\le k}} \hspace{6pt} [w_i]\wedge [w_j].
\end{equation}
The content depends on the choice of basis. Some examples of how it evaluates are: 1. $c(\alpha_i^{\pm})=0$ for $i=1,...,2g$; 2. on commutators , $c([a,b])=c(aba^{-1}b^{-1})=2[a]\wedge [b]$ for  $a,b\in \pi_1(\Sigma_{g,1})$, and 3. on the boundary word $c([\alpha_1,\alpha_2]...[\alpha_{2g-1},\alpha_{2g}])=2(e_1\wedge e_2+...+e_{2g-1}\wedge e_{2g})=2\omega$ (Proposition \ref{prop:propertiesofcontent}).

We use the content $c$ to define function
    $\xi:\Gamma_{g,1}\to \hom(H,\Lambda^2H)$ (Definition \ref{def:xi}) by setting  
    $$\xi(\phi)(e_i)=c(\phi(\alpha_i))$$
    for each $\phi\in \Gamma_{g,1}$ and $i=1,...,2g$, and extending linearly on $H$. The function $\xi$ is \textit{not} a group homomorphism, but rather a crossed homomorphism that agrees on $I_{g,1}$ with the Johnson homomorphism $\tau$ from \cite{JohnsonAnAbelianQuotient} (see proof of Proposition \ref{prop:torelliactionnontrivial}).
    
\subsubsection{$\Gamma_{g,1}$-algebras}    
All algebras will be over $\Q$. For a group $G$, a $G$-algebra is an algebra with an action of $G$ by algebra automorphisms; usually $G=\Gamma_{g,1}$.

The algebras $\Lambda[x_1,...,x_{2g}]$ and $\Q[y_1,...,y_{2g}]$ are naturally $\sptg$-algebras by treating the $x_i$ and $y_i$ as standard symplectic bases.
Viewing the function $\xi$ as landing in $\hom(\Q\{y_1,...,y_{2g}\},\Lambda^2\{x_1,...,x_{2g}\})$, we define a new $\Gamma_{g,1}$-structure on the product algebra
$\Q[y_1,...,y_{2g}]\otimes \Lambda[x_1,...,x_{2g}]$
by insisting that for $\phi\in \Gamma_{g,1}$ and $i=1,...,2g$,
$$\phi\cdot x_i=\phi\cdot_{\sptg} x_i$$
and
$$\phi\cdot y_i=\phi\cdot_{\sptg} y_i+\xi(\phi)(y_i).$$ 
    
Finally, suppose $T_g$ is another $\Gamma_{g,1}$-algebra with a compatible $\Lambda[x_1,...,x_{2g}]$-module\footnote{In this paper, \textit{module} will mean \textit{left module}.} structure. We specify a new $\Gamma_{g,1}$-structure on the $\Q$-algebra $\Q[y_1,...,y_{2g}]\otimes T_g$, referred to as \textit{the Johnson action on the $y_i$} (see Definition \ref{def:algerbaswithJohnsonaction}) using the natural $\Gamma_{g,1}$-structure on the right hand side of $$\Q[y_1,...,y_{2g}]\otimes T_g=(\Q[y_1,...,y_{2g}]\otimes \Lambda[x_1,...,x_{2g}])\otimes_{\Lambda[x_1,...,x_{2g}]} T_g.$$
The homology algebra $H^*(\Lambda[x_1,...,x_{2g},v], d)$, with differential $d$ given by $d(x_i)=0$ and $d(v)=2\omega$, is such an example of a $\Gamma_{g,1}$-algebra with a compatible $\Lambda[x_1,...,x_{2g}]$-module structure (Lemma \ref{lem:structureofhomology}). 

\begin{notation} For an ungraded vector space $V$, $V[i_1,...,i_k]$ denotes the graded vector space isomorphic to $V$ concentrated in degree $(i_1,...,i_k)$. For a  graded vector space $V$, or a set $A$ of  graded generators, we define the graded algebras
\begin{enumerate}
    \item $\Lambda(V)$, resp. $\Lambda[A]$, the free \textit{exterior} algebra;
    \item $\Q(V)$, resp. $\Q[A]$,  the free \textit{polynomial} algebra;
    \item $S(V)$, resp. $S[A]$,  the free \textit{graded} commutative algebra.  
\end{enumerate}
A grading in bracket will not contribute a sign to graded commutativity.
\end{notation}

\begin{theorem}[Theorem \ref{thm:maintheoremsurfaces}]\label{thm:intromaintheoremsurfaces}
For $i,n\ge 0$, the $\Gamma_{g,1}$-representation $H^i(C_n(\Sigma_{g,1}))$
is isomorphic to the bidegree $(i,(n))$ part of the bigraded $\Gamma_{g,1}$-algebra
\begin{equation*}
    \Q[y_1,...,y_{2g},w]\otimes H^{*,(*)}(\Lambda[x_1,...,x_{2g},v], d)
\end{equation*}
with
\begin{itemize}
    \item bidegrees $|x_i|=(1,(1))$, $|y_i|=(2,(2))$,
    $|w|=(0,(1))$ and $|v|=(1,(2))$;
    
    \item differential $d$ given by 
\begin{equation*}
    d(v)=2\omega:=2(x_1x_2+...+x_{2g-1}x_{2g}),
\end{equation*}
    vanishing on the $x_i$, and extended using the Leibniz rule;
    
    \item the trivial $\Gamma_{g,1}$-action on $w$ and $v$, the symplectic action on the $x_i$ and the Johnson action on the $y_i$.
\end{itemize}
\end{theorem}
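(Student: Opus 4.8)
The plan is to feed $M=\Sigma_{g,1}$ (so $d=2$) and an arbitrary fixed $m\ge 1$ into Theorem \ref{thm:introgeneralmanifolds}: it then suffices to compute $\widetilde H^*(\mapd(\Sigma_{g,1},S^{N}_{\Q}))$ together with its $\Gamma_{g,1}$-action and sphere action, where $N=2+2m$. First I would collapse the boundary, so that $\mapd(\Sigma_{g,1},S^N_{\Q})=\map_*(\Sigma_{g,1}/\partial,S^N_{\Q})$ with $\Sigma_{g,1}/\partial$ a closed genus-$g$ surface, presented as $\bigvee_{2g}S^1$ with a $2$-cell attached along $\rho=[\alpha_1,\alpha_2]\cdots[\alpha_{2g-1},\alpha_{2g}]$. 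Applying $\map_*(-,S^N_{\Q})$ to the cofibre sequence $S^1\xrightarrow{\rho}\bigvee_{2g}S^1\to\Sigma_{g,1}/\partial$ produces a fibration sequence
\begin{equation*}
\Omega^2 S^N_{\Q}\;\longrightarrow\;\mapd(\Sigma_{g,1},S^N_{\Q})\;\longrightarrow\;(\Omega S^N_{\Q})^{2g}\xrightarrow{\ \rho^{*}\ }\Omega S^N_{\Q},
\end{equation*}
where the rightmost $\Omega S^N_{\Q}=\map_*(\partial\Sigma_{g,1},S^N_{\Q})$. Since any diffeomorphism of $\Sigma_{g,1}$ fixing $\partial\Sigma_{g,1}$ induces an automorphism of $\pi_1(\Sigma_{g,1})=F_{2g}$ fixing the boundary word $\rho$, the whole sequence is $\Gamma_{g,1}$-equivariant, with $\Gamma_{g,1}$ acting on $\bigvee_{2g}S^1$ through $\Aut(F_{2g})$ and \emph{trivially} on the two rightmost terms (they depend only on $\partial\Sigma_{g,1}$, which is fixed pointwise); the sphere action commutes with everything, being a postcomposition.

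Next I would record the cohomology of the pieces, plus one piece of coalgebra structure. As $N$ is even, $S^N_{\Q}$ has minimal model $(\Lambda(\iota,\kappa),d\kappa=\iota^2)$ with $|\iota|=N$, $|\kappa|=2N-1$, and the sphere action scales $\iota$ by $q$ and $\kappa$ by $q^2$; looping once and twice gives, as spaces, $\Omega S^N_{\Q}\simeq K(\Q,2m+1)\times K(\Q,4m+2)$ and $\Omega^2 S^N_{\Q}\simeq K(\Q,2m)\times K(\Q,4m+1)$, so that $H^*(\Omega S^N_{\Q})=\Lambda[x]\otimes\Q[y]$ and $H^*(\Omega^2 S^N_{\Q})=\Q[w]\otimes\Lambda[v]$, where $x$ (from $\iota$) has degree $2m+1$ and weight $1$, $y$ (from $\kappa$) has degree $4m+2$ and weight $2$, $w$ has degree $2m$ and weight $1$, and $v$ has degree $4m+1$ and weight $2$. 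The crucial fact is the loop-concatenation coproduct on $H^*(\Omega S^N_{\Q})$: dual to the Pontryagin ring $H_*(\Omega S^N_{\Q})=\Q[\xi]$ with $|\xi|=2m+1$, it makes $x$ primitive but satisfies $\mu^*y=y\otimes 1+1\otimes y+x\otimes x$. Consequently ``evaluating $y$ along a word'' $u=\ell_1\cdots\ell_k\in F_{2g}$ — i.e.\ pulling $y$ back along the multiplication map $(\Omega S^N_{\Q})^{2g}\to\Omega S^N_{\Q}$ realising $u$ — produces the linear term $\sum_j[\ell_j]$ in the $y$-variables plus exactly the content $c(u)=\sum_{j<j'}[\ell_j]\wedge[\ell_{j'}]$ in the $x$-variables, while evaluating the primitive $x$ produces only $\sum_j[\ell_j]=[u]\in H$.

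I would then run the Serre spectral sequence of $\Omega^2 S^N_{\Q}\to\mapd(\Sigma_{g,1},S^N_{\Q})\to(\Omega S^N_{\Q})^{2g}$, with $E_2=\Lambda[x_1,\dots,x_{2g}]\otimes\Q[y_1,\dots,y_{2g}]\otimes\Q[w]\otimes\Lambda[v]$. Transgressions are governed by $\rho^{*}$: each generator of $F_{2g}$ occurs in the commutator word $\rho$ once with each sign and $x$ is primitive, so $\rho^{*}x=0$ and $d(w)=0$; while $\rho^{*}y$ has zero linear part and content part $c(\rho)=2\omega$ with $\omega=x_1x_2+\dots+x_{2g-1}x_{2g}$ (Proposition \ref{prop:propertiesofcontent}), so $d(v)=2\omega$. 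As $x_i,y_i$ are permanent cycles and this is the only differential, the spectral sequence degenerates and yields a bigraded, sphere-equivariant algebra isomorphism
\begin{equation*}
H^*(\mapd(\Sigma_{g,1},S^N_{\Q}))\;\cong\;\Q[y_1,\dots,y_{2g},w]\otimes H^{*,(*)}(\Lambda[x_1,\dots,x_{2g},v],d),\qquad d(v)=2\omega .
\end{equation*}
The $\Gamma_{g,1}$-action passes through: it is trivial on $w,v$ (the fibre $\Omega^2 S^N_{\Q}$ carries the trivial action), symplectic on the $x_i$, and, by the word-evaluation computation, sends $y_i\mapsto\phi\cdot_{\sptg}y_i+c(\phi(\alpha_i))$ for $\phi\in\Gamma_{g,1}$, where $c(\phi(\alpha_i))\in\Lambda^2H$ becomes, after $\omega=\tfrac12 d(v)$ is killed, an element of $H^{*,(*)}(\Lambda[x_i,v],d)$. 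By Definition \ref{def:xi} the assignment $\phi\mapsto(e_i\mapsto c(\phi(\alpha_i)))$ is precisely the crossed homomorphism $\xi$, so this is the Johnson action on the $y_i$ of Definition \ref{def:algerbaswithJohnsonaction}, applied to $T_g=H^{*,(*)}(\Lambda[x_i,v],d)$ (the $\Lambda[x_i]$-module $\Gamma_{g,1}$-algebra of Lemma \ref{lem:structureofhomology}); since $\omega$ is $\Gamma_{g,1}$-invariant, the differential is equivariant and the action descends. Finally, declaring a class of cohomological degree $j$ and sphere-weight $n$ to have bidegree $(j-2mn,(n))$ gives $x_i,y_i,w,v$ the bidegrees $(1,(1)),(2,(2)),(0,(1)),(1,(2))$ of the statement, and then Theorem \ref{thm:introgeneralmanifolds}, now reading $H^i(C_n(\Sigma_{g,1}))\cong\widetilde H^{\,i+2mn}(\mapd(\Sigma_{g,1},S^N_{\Q}))^{(n)}$, identifies $H^i(C_n(\Sigma_{g,1}))$ with the bidegree $(i,(n))$ part of the displayed algebra; in particular the answer does not involve $m$.

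The step I expect to be the main obstacle is pinning down the $\Gamma_{g,1}$-action on the classes $y_i$: one must show that the quadratic term in the coproduct $\mu^*y$ produces exactly the content function, identify the resulting twist of the symplectic action with the crossed homomorphism $\xi$, and check — using the properties of $c$ in Proposition \ref{prop:propertiesofcontent} — both that $d(v)=2\omega$ holds on the nose and that $y_i\mapsto\phi\cdot_{\sptg}y_i+c(\phi(\alpha_i))$ is a genuine group action (the cocycle identity). The remaining ingredients — the boundary collapse, the $\Gamma_{g,1}$-equivariance and triviality statements for the fibration, the degeneration of the spectral sequence and the absence of multiplicative extension problems, and the bookkeeping of weights and the degree shift — are then essentially formal.
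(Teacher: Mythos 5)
Your reduction via Theorem \ref{thm:introgeneralmanifolds}, your identification of $\mapd(\Sigma_{g,1},S^{N}_{\Q})$ as the fibre of restriction-to-the-boundary over $\map_*(\vee_{2g}S^1,S^N_\Q)$, your computation of the base's cohomology with the content/coproduct argument (this is exactly Theorem \ref{thm:mapsfromwedges}/\ref{thm:mapsfromsurfaces}), and your identification of the single transgression $d(v)=2\omega$ via $c(\rho)=2\omega$ (this is Lemma \ref{lem:sssofr}) all match the paper. The genuine gap is the sentence ``the spectral sequence degenerates and yields a bigraded, sphere-equivariant algebra isomorphism,'' together with your closing claim that ``the absence of multiplicative extension problems'' is essentially formal. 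Degeneration only identifies the \emph{associated graded} of the Serre filtration on $H^*(\mapd(\Sigma_{g,1},S^N_\Q))$ with $\Q[y_i,w]\otimes H^{*,*}(\Lambda[x_i,v],d)$; to prove the theorem you must split this filtration $\Gamma_{g,1}$-equivariantly and multiplicatively, and neither degree nor weight reasons do this for you. Concretely, in total degree $\mu g+2\mu-1$ (with $\mu=2m+1$ in your indexing) the $E_\infty$-page of your fibration has two distinct nonzero filtration steps of the \emph{same} sphere-weight $g+2$: the classes $z\cdot v$ with $z\in\ker(\omega\wedge-)\subset\Lambda^{g}[x_i]$, and the classes $w\,y_j\cdot(\text{monomials in }\Lambda^{g-1}[x_i])$. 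So a priori the Torelli group could act on a lift of $z\cdot v$ with off-diagonal terms landing in the lower filtration step, and the sphere action cannot detect this; likewise products of lifted classes could acquire lower-filtration corrections. This is precisely the problem the paper spends Sections \ref{sec:thefibrations}--\ref{sec:thehomologyandthemap} solving: it introduces the second fibration $\chi\circ r$ obtained from the Postnikov fibration \eqref{eq:fibrationrationalsphere}, computes its spectral sequence $Q^{*,*}$ (which moves the $y_i$ and $w$ into the fibre direction), and uses the Lifting Lemma \ref{lem:liftinglemma} for ``bottom and closed'' index sets to produce a canonical, hence equivariant and multiplicative, lift $\sigma_E$ of the exterior part, while the polynomial lift $\sigma_P$ is \emph{not} equivariant and equivariance of $\sigma=\sigma_P\otimes\sigma_E$ has to be verified degree by degree (in particular in degree $2\mu$, weight $2$, where the Johnson twist on the $y_i$ is pinned down).

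So: the route is the paper's route up to and including the $E_2$-page and its differential, but the hardest step — promoting the $E_\infty$-page to an actual weighted $\Gamma_{g,1}$-algebra isomorphism — is missing, and it cannot be waved through as formal. To repair the proposal you would need either the paper's two-fibration/lifting-lemma argument or some substitute argument producing canonical equivariant cochain-level (or filtration-level) representatives for the classes $y_i$, $w$, and for $H^{*}(\Lambda[x_i,v],d)$, together with a check that the resulting map is multiplicative and that the $\Gamma_{g,1}$-action on the $y_i$ is exactly the Johnson twist with no further lower-filtration corrections.
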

\begin{remark}
Even though we will make heavy use of the cup-product in Sections \ref{sec:mapsfromwedges} and \ref{sec:mapsrelboundary}, we make no claims of computing the ring structure of $H^*(C_n(M))$. (But see Conjecture \ref{conj:ringstructure}).
\end{remark}

\begin{remark}
We give an alternative basis-free description of the algebra in Theorem \ref{thm:intromaintheoremsurfaces}. Using $\xi$, we define the \textit{Johnson representation} $J$ of $\Gamma_{g,1}$ (Proposition \ref{prop:JohnsonRep}), fitting in an extension of $\Gamma_{g,1}$-representations
\begin{equation}
\begin{tikzcd}
     0\rar & \Lambda^2H\rar["i"] & J\rar & H\rar& 0.
\end{tikzcd}
    \label{eq:introdefJ}
\end{equation}
The bigraded commutative $\Gamma_{g,1}$-algebra
$R^{*,(*)}$ is the quotient 
\begin{equation}
    R^{*,*}=\Q\big(J[2,(2)]\oplus \Q[0,(1)]\big)\otimes \Lambda\big(H[1,(1)]\oplus  \Q[1,(2)]\big)/I,
\end{equation}
where $\Q$ is the trivial $\Gamma_{g,1}$-representation and $I$ is the ideal
\begin{equation}
    I=\Big(z-i(z):z\in \Lambda^2\big(H[1,(1)]\big)\Big),
\end{equation}
and differential $d_R$ given by
$d_R(1[1,(2)])=2\omega \in \Lambda^2\big(H[1,(1)]\big)$
on the generator $1[1,(2)]\in\Q[1,(2)]$,
vanishing on the other generating vector spaces and extended by the Leibniz rule. The isomorphism from Theorem \ref{thm:intromaintheoremsurfaces} becomes $H^i(C_n(\Sigma_{g,1}))\cong H^{i,(n)}(R,d_R)$.
\end{remark}

Finally, we can filter $\Gamma_{g,1}$ by the \textit{Johnson filtration} 
$$...\subset J(i+i)\subset J(i)\subset... \subset J(0)=\Gamma_{g,1}$$ defined by
\begin{equation}
    J(i)=\ker\Big(\Gamma_{g,1}\curvearrowright \pi_1(\Sigma_{g,1})/\pi_1(\Sigma_{g,1})^{(i)}\Big)
\label{eq:JohnsonFiltration}
\end{equation}
for $i\ge 0$, 
where $\pi_1(\Sigma_{g,1})^{(i)}$ is the $i$-th lower central subgroup of $\pi_1(\Sigma_{g,1})$.
Clearly $J(1)=I_{g,1}$.

\begin{corollary}[Corollary \ref{cor:actionofJ2trivial}, Proposition \ref{prop:torelliactionnontrivial}]
The $\Gamma_{g,1}$-action  on $H^*(C_n(\Sigma_{g,1}))$ is non-trivial on $I_{g,1}$, if $g\ge 2$, but is trivial on the second Johnson filtration subgroup $J(2)$ for all $g\ge 0$.
\end{corollary}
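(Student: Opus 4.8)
The plan is to read the action off the explicit model of Theorem~\ref{thm:intromaintheoremsurfaces}, in which $\Gamma_{g,1}$ acts by algebra automorphisms on the chain algebra $\Q[y_1,\dots,y_{2g},w]\otimes\Lambda[x_1,\dots,x_{2g},v]$ (differential determined by $d(v)=2\omega$) fixing $w$ and $v$, acting symplectically on the $x_i$, and acting on the $y_i$ by $\phi\cdot y_i=\phi\cdot_{\sptg}y_i+\xi(\phi)(e_i)$ with $\xi(\phi)(e_i)=c(\phi(\alpha_i))\in\Lambda^2 H$ regarded as a quadratic polynomial in the $x_i$; this descends to $H^{*,(*)}(-,d)$ and computes $H^*(C_n(\Sigma_{g,1}))$. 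Two observations drive the argument: a mapping class acts trivially on \emph{all} the $H^*(C_n(\Sigma_{g,1}))$ once it fixes the chain generators $x_i,y_i,w,v$; and it acts non-trivially once it moves a single cohomology class.

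\emph{Triviality on $J(2)$.} For $g=0$ the group $\Gamma_{0,1}$ is trivial, so assume $g\ge 1$ and take $\phi\in J(2)\subseteq I_{g,1}$. Being in the kernel of $\Gamma_{g,1}\to\sptg$, $\phi$ satisfies $\phi\cdot_{\sptg}x_i=x_i$ and $\phi\cdot_{\sptg}y_i=y_i$, so it remains to check that $\xi(\phi)=0$. Since $\phi$ acts trivially on $\pi_1(\Sigma_{g,1})/\pi_1(\Sigma_{g,1})^{(2)}$ we may write $\phi(\alpha_i)=\alpha_i h_i$ with $h_i\in\pi_1(\Sigma_{g,1})^{(2)}$. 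The defining sum for the content yields $c(uv)=c(u)+c(v)+[u]\wedge[v]$, and this together with $c([a,b])=2[a]\wedge[b]$ shows that $c$ restricts to a homomorphism on $[\pi_1(\Sigma_{g,1}),\pi_1(\Sigma_{g,1})]$ vanishing on $\pi_1(\Sigma_{g,1})^{(2)}$. Hence $c(\phi(\alpha_i))=c(\alpha_i)+c(h_i)+[\alpha_i]\wedge[h_i]=0$, since $c(\alpha_i)=0$ and $[h_i]=0$; so $\phi$ fixes all chain generators and acts trivially on every $H^*(C_n(\Sigma_{g,1}))$. (Equivalently: $\xi$ restricted to $I_{g,1}$ is the Johnson homomorphism $\tau$, whose kernel is $J(2)$ by \cite{JohnsonAnAbelianQuotient}.)

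\emph{Non-triviality on $I_{g,1}$ for $g\ge 2$.} I would exhibit this already on $H^2(C_2(\Sigma_{g,1}))$, the bidegree $(2,(2))$ part of the model. Since $H^{*,(*)}(\Lambda[x,v],d)$ equals $\Q$ in bidegree $(0,(0))$, the span of the $x_i$ in bidegree $(1,(1))$, and $\Lambda^2 H/\Q\omega$ in bidegree $(2,(2))$, a short computation identifies $H^2(C_2(\Sigma_{g,1}))$ with the span of $y_1,\dots,y_{2g}$ (from $\Q[y,w]^{(2,(2))}\otimes H^{(0,(0))}$) together with the classes of $\Lambda^2 H/\Q\omega$ (from $\Q[y,w]^{(0,(0))}\otimes H^{(2,(2))}$), these two pieces being complementary. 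For $\phi\in I_{g,1}$ the second piece is fixed, while $\phi\cdot y_i=y_i+[c(\phi(\alpha_i))]$ with $[c(\phi(\alpha_i))]$ the image of $c(\phi(\alpha_i))\in\Lambda^2 H$ in $\Lambda^2 H/\Q\omega$; note $\omega=\tfrac12 d(v)$ is now a coboundary. Thus $\phi$ acts non-trivially as soon as $c(\phi(\alpha_i))=\xi(\phi)(e_i)\notin\Q\omega$ for some $i$. For $g\ge 2$ pick $\phi\in I_{g,1}$ with $\tau(\phi)\ne 0$ (such $\phi$, e.g.\ suitable bounding-pair maps, exist by \cite{JohnsonAnAbelianQuotient}). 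Then $\xi(\phi)=\tau(\phi)$ is a nonzero element of the natural $\Gamma_{g,1}$-subrepresentation $\Lambda^3 H\subseteq\hom(H,\Lambda^2 H)$, which meets the subspace $\{f:\im f\subseteq\Q\omega\}\cong H^{\vee}$ only in $0$: an element $\chi\otimes\omega$ with $\chi\in H^{\vee}$ is not totally antisymmetric unless it is zero, because $\omega=\sum_i e_{2i-1}\wedge e_{2i}$ repeats indices. Hence $c(\phi(\alpha_i))\notin\Q\omega$ for some $i$, so $\phi\cdot y_i\ne y_i$ in $H^2(C_2(\Sigma_{g,1}))$.

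The hard part is the last step: since $\omega$ is a coboundary in the model, the seemingly non-trivial term $c(\phi(\alpha_i))$ in the $y_i$-action could vanish in cohomology, so it is not enough that $\tau\ne 0$ — one must use that $\im\tau$ lands in the $\Lambda^3 H$-summand of $\hom(H,\Lambda^2 H)$, which is disjoint from the $\omega$-multiples. (If one prefers to avoid Johnson's structural description of $\tau$, one can instead take a completely explicit bounding-pair map $\phi$, compute the words $\phi(\alpha_i)$, and check by hand that some $c(\phi(\alpha_i))$ is not proportional to $\omega$.) Everything else is bookkeeping inside the model of Theorem~\ref{thm:intromaintheoremsurfaces}.
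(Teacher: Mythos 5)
Your proposal is correct and takes essentially the same route as the paper: triviality on $J(2)$ via the content vanishing on $\pi_1(\Sigma_{g,1})^{(2)}$ (using Proposition \ref{prop:propertiesofcontent}), and non-triviality by identifying $H^2(C_2(\Sigma_{g,1}))$ with $J/\langle\omega\rangle$ and using that $\xi|_{I_{g,1}}$ is (twice) the Johnson homomorphism, whose nonzero values lie in $\Lambda^3H\subset\hom(H,\Lambda^2H)$ and hence avoid $\hom(H,\Q\omega)$ for $g\ge 2$. The only nitpick is the normalization $\xi|_{I_{g,1}}=2\tau$ rather than $\tau$, which is immaterial to the argument; your explicit check that $\Lambda^3H\cap(H^{\vee}\otimes\omega)=0$ is, if anything, slightly more careful than the paper's dimension remark.
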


\subsection{Acknowledgments} The author is indebted to his PhD supervisor Oscar Randal-Williams for suggesting the problem, and his continuous discussions, support and motivation over the past two years.

\section{The group $\Q^*$, sphere actions, and weights}\label{sec:generalities}
Let $\Q^*$ denote the group of non-zero rationals with multiplication. For a simply connected space $X$, a map $r:X\to X_{\Q}$ is a \textit{rationalisation} if it induces rationalisations on homotopy or, equivalently, homology groups. The rationalisation $X_{\Q}$ is uniquely defined up to homotopy equivalence.

\begin{definition}
    A based $\Q^*$-action on a rational sphere $(S^{n}_{\Q},*)$, $n\ge 1$, by homeomorphisms is called a \textit{sphere action} if the induced action on $\pi_n(S^n_{\Q})=\Q$ is by multiplication by $q$ for every $q\in \Q^*$. 
\end{definition}
\begin{remark}
Sphere actions exist (see Section \ref{sec:oddsphereseilenbergmaclane} for an example), but we will not specify one yet.
In fact, for any $d,n\ge 1$, since $S^{d}\wedge S^{n}_{\Q}\simeq_{\Q}S^{d+n}_{\Q}$, specifying a sphere action on $S^{n}_{\Q}$ induces a sphere action on $S^{d}\wedge S^{n}_{\Q}$.
\end{remark}

\begin{definition}[Weightspaces]
For a $\Q^{*}$-representation $V$ and $k\in \Z$, the \textit{$k$-weightspace} $V^{(k)}$ is the subspace of $V$ on which each $q\in \Q^{*}$ acts by multiplication by $q^k$. We say that $\Q^{*}$ acts on $V$ \textit{purely by weights} if $V$ splits as the direct sum of its weightspaces.  
\end{definition}

\begin{definition}[Weighted algebras]
A \textit{weighting} on a (differential) multi-graded commutative algebra $A^{*,...,*}$ is an extra grading denoted by $A^{*,...,*,(*)}$ that is compatible with the product of $A$, but \textit{does not} contribute a Koszul sign in the graded commutativity rule (nor in the Leibniz rule). If $A$ has a weighting, we call $A$ \textit{weighted}.  

If $V$ is a weighted multigraded vector space, then $S(V)$ is the free graded commutativity algebra on $V$ that ignores the weight in the graded commutativity.

Once defined, the weighting index will be usually omitted to alleviate notation.
\label{def:weightedalgebras}
\end{definition}

\begin{remark}
A multi-graded algebra $A^{*,...,*}$ on which $\Q^{*}$ acts purely by weights inherits a \textit{weighting} $A^{*,...,*,(*)}$. Cohomology of a space defined functorially in $S^{n}_{\Q}$ will usually be naturally weighted by the sphere action.
\end{remark}

\section{Configuration spaces in terms of mapping spaces, equivariantly}\label{sec:equivariantscanning}
The cohomology of mapping spaces $H^*(\mapd(M,X))$ is covariantly functorial in the manifold $(M,\partial M)$ with boundary preserving maps and contravariant in the based space $(X,x_0)$ with based maps. 
Therefore, the algebra $H^*(\mapd(M,S^{d+2m}_{\Q}))$ inherits commuting actions of $\MCG(M)$ by precomposition of maps and of $\Q^*$ by postcomposition with the sphere action. The aim of this section is proving

\begin{theorem} \label{thm:maintheoremforgeneralmanifolds}
For any $m\ge 1$, the sphere action on $H^{*}(\map_{\partial}(M,S^{d+2m}_{\Q}))$ is purely by weights, and there is a $\MCG(M)$-equivariant isomorphism
\begin{equation}
    H^i(C_k(M))\cong \widetilde{H}^{i+2mk}(\map_{\partial}(M,S^{d+2m}_{\Q}))^{(k)}.
\end{equation}
\end{theorem}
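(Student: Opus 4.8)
The plan is to exploit the classical scanning/configuration-space-to-mapping-space philosophy of McDuff, Segal, Bödigheimer, and others, but carried out in a way that retains the $\MCG(M)$-action and tracks the sphere action grading. Recall that for a manifold $M$ of dimension $d$ with boundary, the labelled configuration space (or the ``bar construction on little $d$-discs'') with labels in a pointed space admits a group completion, and scanning produces a map from the free $E_d$-algebra on a point to a section space of a fibration with fibre $S^d$; when the label space is contractible one recovers $\coprod_k C_k(M)$. Concretely, I would build a comparison between $\bigsqcup_k C_k(M)$ and $\mapd(M, S^d)$ — more precisely, between the cohomology of $C_k(M)$ in a fixed configuration degree $k$ and the appropriate summand of $H^*(\mapd(M,S^d_{\Q}))$, after stabilising $S^d$ to $S^{d+2m}$ so that everything lands in positive-dimensional, simply connected spheres where rationalisation behaves well. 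The point of passing to $S^{d+2m}$ with $m \ge 1$ is exactly that $S^{d+2m}_{\Q}$ is simply connected (indeed $(d+2m-1)$-connected), its rational homotopy is concentrated in one or two degrees (odd versus even sphere), and the mapping space $\mapd(M, S^{d+2m}_{\Q})$ has a transparent rational homotopy type via Haefliger/Sullivan models.

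The key steps, in order, are as follows. First, I would recall (or set up, with reference to Section \ref{sec:oddsphereseilenbergmaclane}) a specific sphere action on $S^{d+2m}_{\Q}$, and note that the ``scanning'' or ``electric field'' map is natural in both $M$ (contravariantly for the target, covariantly for the source manifold with boundary conditions) and in the target sphere. This naturality is what furnishes the commuting $\MCG(M)$ and $\Q^*$ actions on both sides and makes the eventual isomorphism equivariant. Second, I would identify the configuration-degree grading on the configuration side with a grading on the mapping-space side: a configuration of $k$ points contributes, under scanning, a map of ``degree $k$'' into the sphere bundle, and this degree is exactly detected by the $\Q^*$-weight since $q\in\Q^*$ acts on $\pi_{d+2m}(S^{d+2m}_{\Q})=\Q$ by multiplication by $q$, hence on a ``degree $k$'' class by $q^k$. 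This is the bookkeeping that produces the superscript $(k)$ and simultaneously shows the sphere action is purely by weights (because $H^*(\mapd(M,S^{d+2m}_{\Q}))$ decomposes over connected components / degrees of the section space, each of which is a single weightspace). Third, I would pin down the cohomological degree shift: scanning into $S^{d+2m}$ rather than $S^d$ shifts the fundamental class of each point by $2m$, so a configuration of $k$ points shifts cohomological degree by $2mk$, which is the source of the $+2mk$ in $\widetilde H^{j+2mk}$. The reduced cohomology $\widetilde H$ appears because the basepoint component of the section space (the constant map) corresponds to $C_0(M) = \mathrm{pt}$, whose reduced cohomology we discard in the positive-weight summands.

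The main obstacle I expect is making the comparison map an equivariant \emph{rational homology} equivalence in each fixed configuration degree $k$, rather than merely a stable or group-completed statement. The scanning map $\bigsqcup_k C_k(M) \to \mapd(M,S^d)$ is typically only a homology equivalence after group completion (i.e. on $\Omega B$ of the monoid, or after taking $\Z$-completion), so one must argue that in the present setup — unordered configurations in a manifold \emph{with nonempty boundary}, which already form a ``group-complete enough'' object, or after the $S^{d+2m}$-stabilisation which kills the relevant instability — the map is a genuine equivalence onto the relevant components with no correction. I would handle this by either (a) invoking the boundary to slide points off to infinity, giving a homotopy commutative monoid structure whose components are all equivalent and whose group completion is accessed without changing rational cohomology, or (b) working at the level of rational homotopy types directly: compute $H^*(\mapd(M,S^{d+2m}_{\Q}))$ via its Sullivan/Haefliger model, compute $H^*(C_k(M);\Q)$ via known models (e.g. the Bödigheimer–Cohen–Taylor or Kriz–Totaro type description, or the $\Q$-cohomology of the $E_d$-algebra), and match the two with their $\MCG(M)$-actions. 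Route (b) is cleaner for equivariance and for the weight bookkeeping, and I would present the proof along those lines, using the earlier-established functoriality to transport the $\MCG(M)$-action through the identification.
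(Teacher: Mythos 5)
There is a genuine gap, and it sits exactly at the two points your proposal treats as bookkeeping. First, the mechanism you give for the weight decomposition is wrong: for $m\ge 1$ the space $\mapd(M,S^{d+2m}_{\Q})$ is \emph{connected} (the target is $(d+2m-1)$-connected while $\dim M=d$), so its cohomology does not ``decompose over connected components / degrees of the section space'', and in the unstabilised picture the $\Q^*$-action does not even preserve the degree components (postcomposition by $q$ multiplies degrees). The configuration grading is not visible in $\pi_0$ after stabilisation; in the paper it is recovered through the $\MCG(M)\times\Q^*$-equivariant stable splitting of Tillmann--Thorpe, $\widetilde{H}_*(\Gamma_{\partial}(M;S^{2m}_{\Q}))\cong\bigoplus_k\widetilde{H}_*(D_k(M;S^{2m}_{\Q}))$ with $D_k(M;X)=(F_k(M)_+\wedge X^{\wedge k})/\mathfrak{S}_k$, together with two concrete computations (Claim \ref{cl:claim}): the Thom isomorphism for the rank-$2mk$ bundle $\eta_{k,2m}$ identifies $\widetilde{H}^{i+2mk}(D_k(M;S^{2m}))$ with $H^i(C_k(M))$ (this is where the shift $2mk$ really comes from, and its $\MCG(M)$-equivariance requires checking that the Thom class is invariant), and a degree-$n$ self-map of the \emph{label} sphere acts on $\widetilde{H}^*(D_k)$ by $n^k$ because it has degree $n^k$ on the $k$-fold fibrewise smash. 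The weight $k$ thus comes from the $k$-fold smash of labels on the filtration quotients, not from a component structure, and your proposal contains no substitute for this step. Your two fallback routes do not repair it: the unlabelled scanning map $C_k(M)\to\{\text{degree-}k\ \text{sections}\}$ is a homology isomorphism only in a stable range for fixed $k$, so ``group completing via the boundary'' does not give the statement degree by degree; and computing both sides separately by Sullivan/Haefliger models and ``matching'' cannot produce an $\MCG(M)$-\emph{equivariant} isomorphism for a general compact oriented $M$ (an abstract agreement of graded vector spaces is exactly what the theorem is supposed to upgrade, and no general equivariant model of $H^*(C_k(M))$ is available to match against).

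Second, scanning naturally lands in the section space $\Gamma_{\partial}(M;S^{2m}_{\Q})$ of the fibrewise smash $\tau^+M\wedge_f S^{2m}_{\Q}$, not in $\mapd(M,S^{d+2m}_{\Q})$. Passing from sections to maps is not automatic: the paper's Theorem \ref{thm:trivialisationequivariance} untwists the bundle using that $BhAut^1_*(S^{d+k}_{\Q})$ is contractible for $d+k$ odd, and then must verify separately that the resulting equivalence is $\MCG(M)$-equivariant up to homotopy (homotopic nullhomotopies). Even when $M$ is parallelisable a choice of trivialisation is not preserved by $\Diffd(M)$, so the naive identification is not equivariant. Your proposal mentions section spaces in passing but then silently conflates them with the mapping space, so the equivariance of the final isomorphism --- the whole point of the statement --- is not established.
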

We interpolate the claimed isomorphism using the auxiliary space $\Gamma_{\partial}(M;S^{2m}_{\Q})$ which we define now.
Let $\tau^+ M$ be the fibrewise one point compactification of the tangent bundle of the manifold $M$. For a based space $(X,x_0)$, let $\tau^+ X:=\tau^+ M\wedge_f X$ be the fibrewise smash product of $(\tau^+_xM,0)$ with $(X,x_0)$. This has a canonical section $s_0$ defined by the smash point. Denote by $\Gamma_{\partial}(M;X)$ the space of continuous sections of $\tau^+_X$ defined on $M$ that agree with $s_0$ on $\partial M$. The diffeomorphism group $\Diffd(M)$ acts on $\Gamma_{\partial}(M;X)$ via bundle maps on $\tau M$. For $X=S^{2m}_{\Q}$, there is also an induced sphere action on $\Gamma_{\partial}(M;S^{2m}_{\Q})$ which commutes with the $\Diffd(M)$ action, just like in the case of mapping spaces.

Our argument is divided in two steps, each giving one of the $\MCG(M)$-equivariant isomorphisms 
\begin{equation}
    H^i(C_k(M))\cong_1\widetilde{H}^{i+2mk}\big(\Gamma_{\partial}(M;S^{2m}_{\Q})\big)^{(k)}\cong_2 \widetilde{H}^{i+2mk}(\map_{\partial}(M,S^{d+2m}_{\Q}))^{(k)}.
    \label{eq:theintermediateisomorphism}
\end{equation}
for $i,k,m\ge 1$. Theorem \ref{thm:weighttheorem} proves isomorphism 1 and Theorem \ref{thm:trivialisationequivariance} proves isomorphism 2.

\subsection{From configuration spaces to section spaces}\label{sec:sectionstoconfigurations}
In this proof, we give an equivariant version of an argument from \cite{BODIGHEIMER1989111generalconfigurations}, that uses the Thom space of a bundle over $C_n(M)$, by applying an equivariant stable splitting result from \cite{Tillmanthorpe2014}.

\begin{theorem}\label{thm:weighttheorem}
For $m\ge 1$, the sphere action on $H^*(\Gamma_{\partial}(M;S^{2m}_{\Q}))$ is purely by weights and there is a $\MCG(M)$-equivariant isomorphism
\begin{equation*}
    H^i(C_k(M))\cong\widetilde{H}^{i+2mk}\big(\Gamma_{\partial}(M;S^{2m}_{\Q})\big)^{(k)}.
\end{equation*}
\end{theorem}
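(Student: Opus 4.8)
The plan is to realize the unordered configuration space $C_n(M)$ inside the section space by the classical scanning/electric-field construction, but carried out equivariantly with respect to $\Diffd(M)$ and the sphere action. Concretely, I would first recall the stable splitting of $\Gamma_\partial(M;S^{2m}_\Q)$ (or rather of a Thom space built from it) following \cite{BODIGHEIMER1989111generalconfigurations}: there is a filtration of $\Gamma_\partial(M;S^{2m})$ by "number of points", and the subquotients of the associated stable splitting are Thom spaces of the bundle $(\tau M)^{\oplus m}$-type bundles over $C_k(M)$, i.e.\ $C_k(M)^{\,\xi_k}$ where $\xi_k$ is the bundle whose fibre over a configuration records the normal/tangent data smashed with $S^{2m}$. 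The point is that the $k$-th stable summand is $\mathrm{Th}$ of an $2mk$-dimensional bundle over $C_k(M)$, which after rationalisation and because $S^{2m}_\Q$ is used, becomes a trivial-up-to-rationalisation bundle, giving a shift by $2mk$ in cohomology; and the sphere action is exactly what multiplies the $k$-th summand by $q^k$, hence realising the weight decomposition. All of this is in the literature non-equivariantly; the genuinely new content is equivariance.

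First I would set up the equivariant stable splitting. The group $\Diffd(M)$ acts on everything in sight — on $C_k(M)$, on the bundles $\xi_k$, on $\Gamma_\partial(M;S^{2m}_\Q)$ by bundle automorphisms of $\tau M$ — and I want the stable splitting to be $\Diffd(M)$-equivariant, or at least $\MCG(M)$-equivariant on cohomology. This is where I would invoke the equivariant stable splitting result of \cite{Tillmanthorpe2014}: their machinery produces the Snaith-type / Böttcher-type splitting in a way natural enough to be equivariant under a topological group acting on $M$. So the plan is: (i) construct the scanning map $C_n(M) \to \Omega^d$-type section space and its Thom-space refinement $\Diffd(M)$-equivariantly; (ii) apply the equivariant splitting of \cite{Tillmanthorpe2014} to get a $\Diffd(M)$-equivariant stable equivalence between (a Thom space of) $\Gamma_\partial(M;S^{2m}_\Q)$ and a wedge $\bigvee_k C_k(M)^{\xi_k}$; (iii) pass to rational cohomology and to $\pi_0\Diffd(M)=\MCG(M)$, noting the action factors through $\MCG(M)$ since $\Diffd(M)$ acts through its homotopy type on cohomology.

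Next I would identify the pieces. On the $k$-th summand, $H^*(C_k(M)^{\xi_k})$ is, by the Thom isomorphism, $H^{*-2mk}(C_k(M))$ twisted by the orientation local system of $\xi_k$. Here I need $\xi_k$ to be $\MCG(M)$-equivariantly orientable with a canonical rational Thom class — this works because the bundle involves $S^{2m}_\Q$ (an even, orientable sphere rationally) smashed fibrewise with the tangent bundle, and orientation-preserving diffeomorphisms preserve the orientation, so the Thom class is genuinely $\MCG(M)$-invariant rationally. That gives the degree shift $H^i(C_k(M)) \cong \widetilde H^{i+2mk}(C_k(M)^{\xi_k})$ equivariantly. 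Simultaneously I would check that the sphere action permutes the summands correctly: postcomposition by the scalar $q$ on $S^{2m}_\Q$ acts on the $k$-point stratum / $k$-th summand by $q^k$ (one factor of $q$ per "point"), which is precisely the statement that the sphere action on $H^*(\Gamma_\partial(M;S^{2m}_\Q))$ is purely by weights with $\widetilde H^{*}(\Gamma_\partial)^{(k)} \cong \widetilde H^*(C_k(M)^{\xi_k})$. Combining these two identifications yields the stated isomorphism $H^i(C_k(M)) \cong \widetilde H^{i+2mk}(\Gamma_\partial(M;S^{2m}_\Q))^{(k)}$.

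The main obstacle I anticipate is step (ii): making the stable splitting honestly equivariant. The non-equivariant splitting uses point-set constructions (little-cubes, subquotients of configuration filtrations, scanning with a chosen metric) that are not obviously natural — a diffeomorphism need not preserve a chosen Riemannian metric or a chosen collar, so one must either average/choose these parametrised over $\Diffd(M)$ (using contractibility of the space of metrics), or appeal to \cite{Tillmanthorpe2014} as a black box that has already done this bookkeeping. I would lean on the latter, but I must be careful that their hypotheses cover the fibrewise-smash-with-$S^{2m}_\Q$ coefficient system and the relative (boundary) setting; verifying this compatibility, and checking that rationalisation commutes with the splitting (so that $S^{2m}_\Q$ rather than $S^{2m}$ can be used throughout, which is what forces the bundle $\xi_k$ to become rationally trivial and produces a clean degree shift rather than a twisted one), is the delicate technical heart of the argument. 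Everything after that — Thom isomorphism, tracking the $\Q^*$-weights — is formal.
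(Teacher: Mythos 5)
Your proposal follows essentially the same route as the paper: apply the Tillmann--Thorpe equivariant stable splitting to $\Gamma_{\partial}(M;S^{2m}_{\Q})$ (the paper invokes their Corollary 4.2 directly with labels $X=S^{2m}_{\Q}$ and $G=\Q^*$, which settles your worry about the boundary-relative, labelled setting), identify the $k$-th summand as a Thom space over $C_k(M)$ with an $\MCG(M)$-invariant Thom class, and check that the sphere action has weight $k$ on that summand. One correction to your step on the summands: they carry no tangential twisting at all --- they are $D_k(M;S^{2m}_{\Q})=F_k(M)_+\wedge_{\mathfrak{S}_k}(S^{2m}_{\Q})^{\wedge k}$, the Thom spaces of the $2mk$-dimensional \emph{label} bundle $F_k(M)\times_{\mathfrak{S}_k}(\R^{2m})^k\to C_k(M)$ (the tangent bundle lives only on the section-space side), so neither rational triviality nor any ``$(\tau M)^{\oplus m}$-type'' bundle enters; all that is needed is orientability of this bundle, automatic since $\mathfrak{S}_k$ permutes even-dimensional blocks, together with the paper's isotopy argument (isotope a diffeomorphism to fix an open set and a configuration inside it) to see that each mapping class fixes the Thom class, and a comparison of $D_k(M;S^{2m})$ with $D_k(M;S^{2m}_{\Q})$ via the rationalisation and integral degree-$n$ maps to compute the weights.
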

\begin{proof}
We apply an equivariant stable splitting theorem from \cite{Tillmanthorpe2014}. To state it, we define the \textit{configuration space of $M$ with labels in the based space $(X,x_0)$}
\begin{equation}
    C(M;X)=\Big(\bigsqcup_{n\ge 1}F_n(M)\times_{\mathfrak{S}_n} X^n\Big)\big/ \sim
\end{equation}
where $(z_1,...,z_n;x_1,...,x_n)\sim (z_1,...,z_{n-1};x_1,...,x_{n-1})$ if $x_n=x_0$, and its filtration $C_{\le k}(M;X)\subset C(M;X)$ by configurations with at most $k$ configuration points. Denote by $D_k(M;X):=C_{\le k}(M;X)/C_{\le k-1}(M;X)$ the filtration quotients, which are naturally given by the formula
\begin{equation}
    D_k(M;X)=(F_k(M)_+\wedge X^{\wedge k})/\mathfrak{S}_k.
    \label{eq:definitionDk}
\end{equation}

Let $G$ be a group acting by based homeomorphisms on $X$. A special case of corollary 4.2 from \cite{Tillmanthorpe2014} is that there is a $\MCG(M)\times G$-equivariant isomorphism
\begin{equation}
    \widetilde{H}_*(\Gamma_{\partial}(M;X))\cong \bigoplus_{k\ge 1} \widetilde{H}_*(D_k(M;X)).
    \label{eq:tillmanthorpetheorem}
\end{equation}

We apply this to $X=S^{2m}_{\Q}$ and $G=\Q^*$ with a sphere action and obtain a $\MCG(M)\times \Q^*$-equivariant isomorphism
\begin{equation}
    \widetilde{H}_*(\Gamma_{\partial}(M;S^{2m}_{\Q}))\cong \bigoplus_{k\ge 1} \widetilde{H}_*(D_k(M;S^{2m}_{\Q})).
    \label{eq:equivariantsplittingQ}
\end{equation}

It is easy to see from formula \eqref{eq:definitionDk} that a rationalisation map $r:S^{2m}\to S^{2m}_{\Q}$ induces a $\Diffd(M)$-equivariant $\Q$-homology isomorphism $D(r):D_{ k}(M;S^{2m})\to D_{k}(M;S^{2m}_{\Q})$. For each $n\in \Q^*\cap\Z$, we construct a based map $f_n:S^{2m}\to S^{2m}$ of degree $n$ by taking the standard degree $n$ map on $S^1$ and doubly suspending it $2m-1$ times. Then $f_n$ is of degree $n$, it fixes the two suspension points $0,\infty\in S^{2m}$,  and $f_n^{-1}(0)=\{0\}$ and  $f_n^{-1}(\infty)=\{\infty\}$. The diagram
\begin{equation}
    \begin{tikzcd}
        D_{ k}(M;S^{2m})\rar["D(r_m)"]\dar["F_n"] & D_{k}(M;S^{2m}_{\Q})\dar["n_D"]\\
        D_{ k}(M;S^{2m})\rar["D(r_m)"] & D_{k}(M;S^{2m}_{\Q})
    \end{tikzcd},
    \label{eq:diagramDk}
\end{equation}
with $n_D$ the sphere action of $n\in \Q^*$ and $F_n$ induced from $f_n$, commutes up to homotopy.
\begin{claim}\label{cl:claim}
For each $k\ge 0$, there is an $\MCG(M)$-equivariant isomorphism 
\begin{equation}
    \widetilde{H}^i(D_k(M;S^{2m}))\cong H^{i-2mk}(C_k(M))
\end{equation}
and, for each $n\in \Z^*$, the induced map $F_n^*$ on $\widetilde{H}^*(D_k(M;S^{2m}))$ is by multiplication by $n^k$.
\end{claim}
Assuming Claim \ref{cl:claim}, we conclude that $\widetilde{H}^i(D_k(M;S^{2m}_{\Q}))\cong H^{i-2mk}(C_k(M))$, $\MCG(M)$-equivariantly. Then the direct sum in the RHS of isomorphism \eqref{eq:equivariantsplittingQ} is finite, so the isomorphism dualises to
\begin{equation}
    \widetilde{H}^i(\Gamma_{\partial}(M;S^{2m}_{\Q}))\cong \bigoplus_{k\ge 1} {H}^{i-2mk}(C_k(M))
\end{equation}
for $i\ge 0$, 
where, from the second part of Claim \ref{cl:claim}, the $\Q^*$-action on each $H^{i-2mk}(C_k(M))$ is by weight $k$. Taking $k$-weightspaces gives the desired result. 

It now remains to prove Claim \ref{cl:claim}.
As in \cite{bodigheimer88rationalcohomologyofsurfaces}, we define the vector bundle $$\eta_{k,2m}:F_k(M)\times_{\mathfrak{S}_k} (\R^{2m})^k\to C_k(M).$$
By identifying the label-space $S^{2m}$ with the one-point compactification $(\R^{2m})^+$,
we obtain a $\Diffd(M)$-equivariant homeomorphism
\begin{equation}
    h:D_k(M;S^{2m})\to \Th(\eta_{k,2m})
\end{equation}
to the Thom space of $\eta_{k,2m}$.
Denote by $E$ the total space of $\eta_{k,2m}$ and by $E^{\#}$ the complement of the $0$-section.

The bundle $\eta_{k,2m}$ is orientable since, in the definition of $\eta_{k,2m}$, the symmetric group $\mathfrak{S}_k$ permutes copies of the even dimensional $\R^{2m}$. Fixing an orientation on $\eta_{k,2m}$, we obtain a \textit{Thom class} $u_{E}\in H^{2mk}(E,E^{\#})$. The Thom isomorphism is given by 
\begin{align*}
    \tau:H^i(C_k(M))&\longrightarrow H^{i+2mk}(E,E^{\#})\cong \widetilde{H}^{i+2mk}(\Th(\eta_{k,2m}))\\
    x&\longmapsto u_{E}\cup \eta_{k,2m}^*(x).
\end{align*}
To see that $\tau$ is $\MCG(M)$-equivariant, it is enough to check that $u_E$ is $\MCG(M)$-invariant. But each $\phi\in\Diffd(M)$ can be isotoped so that it fixes an open $U\subset M$ pointwise. Picking a configuration $s\in C_k(M)$ entirely contained in $U$, $\phi$ acts trivially on the fibre $E_s$ and in particular it preserves orientation. Consequently, $\phi$ fixes the restriction $u_E|_s$ on the fibre at $s$, and thus it fixes $u_E$. We conclude that
\begin{equation}
    h^*\circ \tau:H^i(C_k(M))\to \widetilde{H}^{i+2mk}(D_k(M;S^{2m}))
\end{equation}
is a $\MCG(M)$-equivariant isomorphism.

Finally, we show that that $g_n=h\circ F_n\circ h^{-1}$ acts on $H^*(\Th(\eta_{k,2m}))$ by multiplication by $n^k$. Observe that by the definition of the map $F_n$, the map $g_n$ restricts on $E=\Th(\eta_{k,2m})-\{\infty\}$ to a (non-linear) $(\R^{2m})^k$-bundle map that covers the identity on $C_k(M)$, fixes the $0$-section and maps each $E_s^{\#}\cong((\R^{2m})^k)^{\#}$, $s\in C_k(M)$, into itself. Furthermore, the map $g_n$ acts by degree $n^k$ on the smash product of $k$ spheres $(S^{2m})^{2k}$ that appears as $((\R^{2m})^k)^+\cong E_s^{+}$ in $\Th(\eta_{k,2m})$. Therefore, $g_n$ acts on each pair $(E_s,E_s^{\#})$ by degree $n^k$. So $g_n^*(u_{E})=n^k \cdot u_{E}$ and, since $g_n$ covers the identity on $C_k(M)$, then $g_n^*(u_{E}\cup \eta_{k,2m}^*(x))=n^k\cdot u_{E}\cup \eta_{k,2m}^*(x)$. We conclude that $g_n$ acts on $\widetilde{H}^*(\Th(\eta_{k,2m}))$ by multiplication by $n^k$ and, thus, so does $F_n$ on $\widetilde{H}^*(D_k(M;S^{2m}))$. This concludes the proof of the Claim \ref{cl:claim} and of the theorem.
\end{proof}

\subsection{From section spaces to mapping spaces}\label{sec:sectionstomaps}
We prove isomorphism 2 from \eqref{eq:theintermediateisomorphism} to ``untwist'' the section space into a mapping space. 

\begin{theorem}\label{thm:trivialisationequivariance}
For $n\ge 2$, there is an $\MCG(M)\times \Q^{*}$-algebra isomorphism
\begin{equation}
    H^*(\Gamma_{\partial}(M;S^{n}_{\Q})) \cong H^*(\map_{\partial}(M;S^{n+d}_{\Q})).
    \label{eq:trivialisationequivariance}
\end{equation}

\end{theorem}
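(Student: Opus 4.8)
The plan is to trivialise the bundle $\tau^+M \wedge_f S^n_{\Q}$ fibrewise over $M$, $\MCG(M)$-equivariantly up to homotopy, so that its section space becomes a genuine mapping space. The obstruction to trivialising $\tau M$ itself is the (top-dimensional) Euler class, which generally does not vanish; but after smashing with a high-dimensional sphere and passing to rationalisations the relevant obstructions should die. Concretely, I would first recall that $\tau^+M \wedge_f S^n_{\Q}$ is a fibre bundle over $M$ with fibre $S^d \wedge S^n_{\Q} \simeq_{\Q} S^{d+n}_{\Q}$, and that a section relative to $\partial M$ agreeing with $s_0$ there is the same data as a relative bundle map. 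The structure group is $\Diff^+(\R^d)$ acting through $\GL_d(\R)^+ \simeq SO(d)$ on the compactified fibre and hence on its smash with $S^n_{\Q}$; so the classifying map is $M \to B SO(d)$, and I want to show this map becomes nullhomotopic after applying the functor that records the action on $S^{d+n}_{\Q}$-fibrations, at least rationally and relative to $\partial M$.

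The key steps, in order: (1) Identify $\Gamma_\partial(M; S^n_{\Q})$ as sections of a bundle with fibre $S^{d+n}_{\Q}$ and structure group the monoid $\mathrm{hAut}_*(S^{d+n}_{\Q})$ obtained by pushing forward along $SO(d) \to \mathrm{hAut}_*(S^d) \to \mathrm{hAut}_*(S^d \wedge S^n_{\Q})$. (2) Observe that rationally $\mathrm{hAut}_*(S^{d+n}_{\Q})$ has the rational homotopy type controlled by $\pi_*(S^{d+n}_{\Q}) = \pi_*(S^{d+n})\otimes\Q$, which for $n$ large and even (so $d+n$ odd, or at least in the stable-ish range) is concentrated in a single degree; the map $SO(d) \to \mathrm{hAut}_*(S^{d+n}_{\Q})$ is then rationally nullhomotopic because $BSO(d)$ is rationally a product of even spheres/polynomial generators whose images must land in the (essentially trivial) space of rational self-equivalences. (3) Deduce that the bundle $\tau^+M \wedge_f S^n_{\Q} \to M$ is rationally fibre-homotopy trivial, compatibly with the $\MCG(M)$-action, since the whole construction is functorial in $M$ and natural in diffeomorphisms. (4) A fibre-homotopy trivialisation relative to $\partial M$ then gives a $\Diff_\partial(M)$-equivariant (up to coherent homotopy) homotopy equivalence $\Gamma_\partial(M;S^n_{\Q}) \simeq \mapd(M, S^{d+n}_{\Q})$, inducing the claimed isomorphism of $H^*$ respecting both the $\MCG(M)$-action (from precomposition/bundle maps on $M$) and the $\Q^*$-action (from postcomposition, which is unaffected by the trivialisation since it acts on the fibre $S^n_{\Q}$ and commutes with everything in sight), and it is a ring isomorphism because fibre-homotopy trivialisations can be chosen to respect the fibrewise smash/multiplication structure.

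The main obstacle is step (3)–(4): producing the trivialisation \emph{equivariantly} and \emph{relative to the boundary} simultaneously. Pointwise-on-$M$ one can kill the classifying map after rationalising the fibre, but to get a $\Diff_\partial(M)$-equivariant statement one must work with the universal bundle over $B\Diff_\partial(M)$ (or with the Borel construction) and show the obstruction classes to a relative trivialisation vanish there; the relative condition forces one to control what happens near $\partial M$, where $s_0$ is already prescribed, so the trivialisation must be chosen to be the identity near $\partial M$. I expect this to reduce to showing that a certain obstruction cocycle in $H^*(B\Diff_\partial(M); \pi_*(\mathrm{hAut}_*(S^{d+n}_{\Q}) \text{ rel pt}))$ vanishes, which follows once $\mathrm{hAut}_*(S^{d+n}_{\Q})$ is shown to be rationally weakly contractible in the relevant range — i.e. the self-maps of an odd rational sphere that fix the basepoint and have degree $1$ form a rationally trivial space. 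I would handle the boundary condition by a standard relative obstruction-theory argument, extending over the cells of $M$ away from a collar of $\partial M$, and I would package the $\MCG(M)$-equivariance by performing the entire argument over $B\Diff_\partial(M)$ rather than over a point.
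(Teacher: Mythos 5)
There is a genuine gap, and it sits at the heart of your step (2): the claim that $hAut^1_*(S^{d+n}_{\Q})$ (based, degree-one self-equivalences) is rationally weakly contractible is false exactly in the cases the theorem is used for. One has $hAut^1_*(S^{N}_{\Q})\simeq \Omega^N_1 S^N_{\Q}$, which is contractible only for $N$ odd; for $N$ even it is a $K(\Q,N-1)$ (the Whitehead-product class $\pi_{2N-1}(S^N)\otimes\Q$ survives), so $BhAut^1_*(S^{N}_{\Q})\simeq K(\Q,N)$ is not contractible. In the application $d=2$, $n=2m$, so $d+n$ is even and your "odd rational sphere" hypothesis fails; likewise your assertion that $SO(d)\to hAut_*(S^{d+n}_{\Q})$ is rationally nullhomotopic is unjustified (for $d=2$ the classifying map $BSO(2)\to K(\Q,d+n)$ is a class in $H^{d+n}(\mathbb{CP}^\infty;\Q)\cong\Q$, which has no reason to vanish). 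The paper's proof avoids this by a splitting you are missing: write $S^n_{\Q}=S^k_{\Q}\wedge S^{n-k}_{\Q}$ with $0\le k\le n-1$ chosen so that $d+k$ is \emph{odd}, untwist only the $S^{d+k}_{\Q}$-fibration (whose classifying space $BhAut^1_*(S^{d+k}_{\Q})$ really is contractible), and carry the untouched $S^{n-k}_{\Q}$ factor along; this also makes $\Q^*$-equivariance automatic, since the sphere action lives on the spectator factor, whereas in your setup a fibre-homotopy trivialisation of the full $S^{d+n}_{\Q}$-fibration need not commute with the fibrewise $\Q^*$-action.

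Your strategy for $\MCG(M)$-equivariance is also off track. You propose to produce the trivialisation equivariantly by working over $B\Diffd(M)$ and killing obstructions there; but the relevant obstruction is pulled back from $H^{*}(BSO(d);\Q)$ to the (infinite-dimensional) total space of the universal $M$-bundle, so there is no dimension argument forcing it to vanish, and indeed the paper's remark after the theorem points out that even for parallelisable $M$ no $\Diffd(M)$-equivariant trivialisation exists. The correct, weaker statement one needs — and what the paper proves — is only that the induced isomorphism on cohomology is canonical: because the classifying space $BhAut^1_*(S^{d+k}_{\Q})$ is contractible, any two nullhomotopies $H, H\circ\psi$ of the classifying map $g\circ\tau_M$ are homotopic, so the comparison maps $\gamma_H$ and $\gamma_{H\circ\psi}$ are homotopic and the isomorphism $\gamma_H^*$ commutes with the action of each $\psi\in\Diffd(M)$, without any space-level equivariant trivialisation. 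As written, your argument both asserts a false contractibility statement and aims for an equivariant trivialisation that does not exist, so it does not establish the theorem.
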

\begin{proof}
We fix: 1. an integer $0\le k\le n-1$ so that $d+k$ is odd, 2. a model for $S^k_{\Q}$, 3. a model for $S^{n-k}_{\Q}$ with a sphere action, and 4. $S^d\wedge S^{k}_{\Q}$ as our model for $S^{d+k}_{\Q}$. \footnote{Both sides of isomorphism \eqref{eq:trivialisationequivariance} are $\MCG(M)\times\Q^*$-equivariantly independent of the choice of model for the respective rational sphere.}

Let $E^+\wedge_f S^n_{\Q}$ be the fibrewise one-point-compactification of an oriented vector bundle $E^d\to B$ fibrewise-smashed with $S^n_{\Q}$. By decomposing $S^n_{\Q}=S^{k}_{\Q}\wedge S^{n-k}_{\Q}$, we observe that $$E^+\wedge_f S^n_{\Q}= E^+\wedge_fS^{k}_{\Q}\wedge_fS^{n-k}_{\Q}$$ 
is, in particular, an oriented $S^d\wedge S^{n}_{\Q}$-fibration-with-a-section fibrewise-smashed with $S^{n-k}_{\Q}$. 
The universal oriented $S^{d+k}_{\Q}$-fibration-with-a-section is 
\begin{equation}
    \pi_S:E_{d+k}\longrightarrow BhAut_*^1(S^{d+k}_{\Q}),
\end{equation}
whose base is the classifying space of the monoid $hAut_*^1(S^{d+k}_{\Q})$ consisting of based degree-$1$ homotopy self-equivalences of $S^{d+k}_{\Q}$. Therefore $E^+\wedge_f S^n_{\Q}$ is pulled back from $E_{d+k}\wedge_fS^{n-k}_{\Q}$ via a classifying map $B\to BhAut_*^1(S^{d+k}_{\Q})$.

We apply this to the universal oriented rank-$d$ vector bundle $\pi_{SO}:\gamma_d\to BSO(d)$ and fix a classifying map $g:BSO(d)\to BhAut_*^1(S^{d+k}_{\Q})$ for $\gamma_d^+\wedge_f S^n_{\Q}$. We further fix a classifying map $\tau_M:M\to BSO(d)$ for the tangent bundle of $M$. In all, we obtain the following diagram with two pullback squares
\begin{equation}
    \begin{tikzcd}
    \tau^+M\wedge_fS^{n}_{\Q}\rar\dar["\pi_{M}"]\arrow[dr, phantom, "\lrcorner", very near start] & \gamma_d^+M\wedge_fS^{n}_{\Q}\rar\dar["\pi_{SO(d)}"]
    \arrow[dr, phantom, "\lrcorner", very near start]
    & E_{d+k}\wedge_fS^{n-k}\dar["\pi_{S}"]\\
    M\rar["\tau_M"] &
    BSO(d)\rar["g"] &
    BhAut_*^1(S^{d+k}_{\Q}).
    \end{tikzcd}
    \label{eq:classifyingbundles}
\end{equation}

For a fibre bundle $\pi:E\to B$ with section $s$ and a map $f:M\to B$, let us denote by $L_{\partial}(f,\pi)$ the space of lifts $l$ of $f$ along $\pi$ agreeing with $s$ on $\partial M$, i.e.
\begin{equation}
    L_{\partial}(f,\pi):=\Bigg\{\begin{tikzcd}[row sep=small]
        & E\dar["\pi"'] \\
        (M,\partial M)\rar["f"]\arrow[ur,dashed, "l"] &
        B\uar["s"', bend right]
    \end{tikzcd}: l|_{\partial M}=s\circ f\Bigg\}
\end{equation}
By definition, $\Gamma_{\partial}(M;S^{n})=L_{\partial}(\id_M,\pi_M)$. Furthermore, any homotopy $g\simeq_H f$ produces a homotopy equivalence \begin{equation}
    {L_H}:\begin{tikzcd}
    L_{\partial}(g,\pi)\rar["\sim"]& L_{\partial}(f,\pi)\end{tikzcd},
\end{equation}
and homotopic homotopies $H,H'$ produce homotopic maps $L_H\simeq L_{H'}$. Finally, from the universal property of pullbacks in the diagram \eqref{eq:classifyingbundles}, we can canonically identify
\begin{equation}
    \Gamma_{\partial}(M;S^{n}_{\Q})\cong L_{\partial}(\id_M,\pi_{M})\cong L_{\partial}(g\circ \tau_M,\pi_{S}).
\end{equation}

Now, in general, \begin{equation}
    hAut_*^1(S^{N}_{\Q})\simeq_{\Q}\Omega^k_1S^N_{\Q}\simeq 
    \begin{cases}
    * \text{ , for }N\text{ odd,}\\
    K(\Q,N-1) \text{ , for }N\text{ even,}
    \end{cases}
\end{equation}
so, in particular, $BhAut_*^1(S^{d+k}_{\Q})$ is contractible.
Therefore, $g\circ \tau_M$ is nullhomotopic, say via a null-homotopy $H$, so $L_{\partial}(g\circ \tau_M,\pi_{S})\simeq_{L_H} L_{\partial}(c_*,\pi_{S})$ where $c_*$ is the constant map. Lifts of the constant map are just maps to the fibre, i.e. $L_{\partial}(c_*,\pi_{S})=\map_{\partial}(M;S^{d+k}_{\Q}\wedge S^{n-k}_{\Q})$. In all, we obtain a homotopy equivalence
\begin{equation}
    \gamma_{H}:\begin{tikzcd}
        \Gamma_{\partial}(M;S^{n}_{\Q})\rar["\sim"]& \map_{\partial}(M;S^{d+k}_{\Q}\wedge S^{n-k}_{\Q})
    \end{tikzcd}
\end{equation}
and the induced map $\gamma_H^*$ is the desired isomorphism. Since all the constructions above leave the final $S^{n-k}_{\Q}$ coordinate untouched, the equivalence $\gamma_H$ commutes with the sphere action on $S^{n-k}_{\Q}$, and so $\gamma_H^*$ is $\Q^*$-equivariant. It remains to check that it is also $\Diff(M,\partial M)$-equivariant.

For $\psi\in \Diff(M,\partial M)$, write $d\psi:\tau M\to \tau M$ for the derivative bundle map. Then $\psi$ acts on $s\in \Gamma_{\partial}(M;S^{n}_{\Q})$, by $\psi*s=d\psi\circ s\circ \psi^{-1}$ and on $f\in\map_{\partial}(M;S^{d+n}_{\Q})$ by $\psi*f=f\circ\psi^{-1}$. We obtain the commuting diagram
\begin{center}
\begin{tikzcd}
\Gamma_{\partial}(M;S^{n}_{\Q})
    \rar["\cong"'] \dar["\psi*-"]\arrow[rr,"\gamma_{H\circ \psi}",bend left=15] &
L_{\partial}(g\circ \tau_M\circ\psi,\pi_{Aut^1_*}) 
    \rar["L_{H\circ \psi}","\simeq"'] \dar["-\circ \psi^{-1}"] &
\map_{\partial}(M;S^{d+k}_{\Q}\wedge S^{n-k}_{\Q}) 
    \dar["\psi * -"]\\
\Gamma_{\partial}(M;S^{n}_{\Q}) 
    \rar["\cong"]\arrow[rr,"\gamma_H"', bend right=15]&
L_{\partial}(g\circ \tau_M, \pi_{Aut^1_*})
    \rar["L_{H}","\simeq"']&
\map_{\partial}(M;S^{d+k}_{\Q}\wedge S^{n-k}_{\Q}).
\end{tikzcd}
\end{center}
But, again, $BhAut^1_*(S^{d+k}_{\Q})$ is contractible, so the two homotopies $H$ and $H\circ \psi$ are homotopic and thus give homotopic maps $\gamma_H\simeq\gamma_{H\circ \psi}$. So $\gamma_H^*=\gamma_{H\circ \psi}^*$ commutes with the action of $\psi$ on the two spaces. 
\end{proof}
\begin{remark}
In the case that $M$ is parallelisable, a choice of trivialisation of $\tau M$ provides a homeomorphism $\Gamma_{\partial}(M;S^{n}_{\Q}) \cong \map_{\partial}(M;S^{n+d}_{\Q})$, that gives rise to the isomorphism \eqref{eq:trivialisationequivariance}. However, even in that case the whole proof of Theorem \ref{thm:trivialisationequivariance} is still needed to prove the $\MCG(M)$-equivariance. This is because the trivialisation is not preserved by $\Diffd(M)$, so the homeomorphism is not $\Diffd(M)$-equivariant. 
\end{remark}

\subsection{Final comments}
Let $\Z^*\subset \Q^*$ be the submonoid of non-zero integers with multiplication.
We could define an \textit{integral sphere action} of $\Z^*$ on a sphere $S^{2m}$ so that each $n\in \Z^*$ acts by an appropriate degree $n$ map, and furthermore define weightspaces for $\Z^*$-representations analogously to $\Q^*$.

\begin{corollary}\label{cor:maintheoremgeneralmanifoldsIntegers}
For any $m\ge 1$, the integral sphere action on $H^{*}(\map_{\partial}(M,S^{d+2m}))$ is purely by weights, and there is a $\MCG(M)$-equivariant isomorphism
\begin{equation*}
    H^i(C_k(M))\cong H^{i+2mk}(\map_{\partial}(M,S^{d+2m}))^{(k)}.
\end{equation*}
\end{corollary}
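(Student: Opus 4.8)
The plan is to derive Corollary~\ref{cor:maintheoremgeneralmanifoldsIntegers} from Theorem~\ref{thm:maintheoremforgeneralmanifolds} by comparing the integral mapping space $\map_\partial(M,S^{d+2m})$ directly with the rational one $\map_\partial(M,S^{d+2m}_\Q)$. One cannot just rerun the proof of Theorem~\ref{thm:trivialisationequivariance} with $\Z$ in place of $\Q$, since that argument used the contractibility of $BhAut_*^1(S^{d+k}_\Q)$, a purely rational phenomenon; instead I would exploit the fact that rationalising the \emph{target} of a mapping space out of a finite complex is well behaved. Write $N=d+2m$ and recall $\map_\partial(M,S^{N})=\map_*(M/\partial M,S^{N})$, where $M/\partial M$ is a finite CW complex of dimension $d<N$ (if $\partial M=\varnothing$ this is $\map(M,S^N)=\map_*(M_+,S^N)$). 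Since $S^N$ and $S^N_\Q$ are $(N-1)$-connected and $M/\partial M$ is $d$-dimensional, every based map out of $M/\partial M$ into either is nullhomotopic, so both $\map_\partial(M,S^N)$ and $\map_\partial(M,S^N_\Q)$ are connected; being function spaces out of a finite complex into a simply connected space, they are nilpotent.

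Next I would carry out the comparison itself. Postcomposition with the rationalisation $r\colon S^N\to S^N_\Q$ gives a map $r_\sharp\colon\map_\partial(M,S^N)\to\map_\partial(M,S^N_\Q)$, and on homotopy groups based at the constant map this is $[\Sigma^j(M/\partial M),S^N]\to[\Sigma^j(M/\partial M),S^N_\Q]$; for $j\ge 1$ the source is a finitely generated abelian group and, by the localisation theorem for function spaces out of finite complexes (Hilton--Mislin--Roitberg), this map is precisely its rationalisation. Hence $r_\sharp$ induces an isomorphism on $\pi_*\otimes\Q$ between connected nilpotent spaces, so it is a rationalisation, and in particular $r_\sharp^*\colon H^*(\map_\partial(M,S^N_\Q))\xrightarrow{\cong}H^*(\map_\partial(M,S^N))$ is an isomorphism on rational cohomology (and on reduced cohomology, which differs only in degree $0$).

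Then I would verify equivariance. The group $\MCG(M)$ acts on both sides by precomposition, which commutes with the postcomposition $r_\sharp$, so $r_\sharp^*$ is $\MCG(M)$-equivariant. Let $\Z^*$ act on the source by the rational sphere action restricted to $\Z^*$ and on the target by the integral sphere action, i.e.\ by the degree-$n$ self-maps $f_n$ of $S^N$ built in the proof of Theorem~\ref{thm:weighttheorem}. For each $n$ the maps $r\circ f_n$ and $(n\cdot)\circ r\colon S^N\to S^N_\Q$ both have degree $n$, hence are the same class $n\in\Q=[S^N,S^N_\Q]$ and are based homotopic; postcomposing, $r_\sharp\circ(f_n)_\sharp\simeq (n\cdot)_\sharp\circ r_\sharp$, which on cohomology says that $r_\sharp^*$ intertwines the integral sphere action on the target with the rational one (restricted to $\Z^*$) on the source, i.e.\ $r_\sharp^*$ is $\Z^*$-equivariant.

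Finally I would conclude. By Theorem~\ref{thm:maintheoremforgeneralmanifolds} the $\Q^*$-action on $H^*(\map_\partial(M,S^N_\Q))$ is purely by weights; restricting to the submonoid $\Z^*$ leaves this weight decomposition unchanged, since if $v=\sum_k v_k$ with $v_k$ of weight $k$ satisfies $2v=2^k v$ then $(2^j-2^k)v_j=0$ forces $v_j=0$ for $j\neq k$. Transporting along the $\MCG(M)\times\Z^*$-equivariant isomorphism $r_\sharp^*$ shows the integral sphere action on $H^*(\map_\partial(M,S^N))$ is purely by weights, with $\MCG(M)$-equivariant identifications $H^{i+2mk}(\map_\partial(M,S^N))^{(k)}\cong\widetilde H^{i+2mk}(\map_\partial(M,S^N_\Q))^{(k)}\cong H^i(C_k(M))$, the last being Theorem~\ref{thm:maintheoremforgeneralmanifolds}. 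The main obstacle is the comparison step: the assertion that rationalising the target rationalises the mapping space is standard, but one must check connectedness and nilpotence of the mapping spaces involved in order to pass between ``$\Q$-homology equivalence'' and ``rationalisation''; once that is in place, the equivariance and weight bookkeeping parallels the use of the maps $F_n$ in the proof of Theorem~\ref{thm:weighttheorem}.
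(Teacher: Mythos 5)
Your proposal is correct and follows essentially the same route as the paper: the paper's proof is a two-sentence sketch asserting that the rationalisation $S^{2m}\to S^{2m}_{\Q}$ induces an equivariant rationalisation of mapping spaces under which the integral and (restricted) rational sphere actions are compatible, and then invokes Theorem \ref{thm:maintheoremforgeneralmanifolds}. You have simply supplied the details the paper leaves implicit (connectedness and nilpotence of the mapping spaces, the Hilton--Mislin--Roitberg localisation theorem, the homotopy $r\circ f_n\simeq (n\cdot)\circ r$, and the weight bookkeeping for $\Z^*$ versus $\Q^*$), all of which are accurate.
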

\begin{proof}
The rationalisation $S^{2m}\to S^{2m}_{\Q}$ induces a $\Diffd(M)$-equivariant rationalisation $\map_{\partial}(M,S^{d+2m})\to \map_{\partial}(M,S^{d+2m}_{\Q})$ under which the integral sphere action of $\Z^*$ on $\map_{\partial}(M,S^{d+2m})$ and the restriction of the sphere action of $\Q^*$ on $\map_{\partial}(M,S^{d+2m}_{\Q})$ to $\Z^*$ commute. The conclusion follows from Theorem \ref{thm:maintheoremforgeneralmanifolds}.
\end{proof}

Finally, we can get rid of the weights in Theorem \ref{thm:maintheoremforgeneralmanifolds}, if we choose a big enough $m$.
\begin{corollary}\label{thm:weighttheorembigm}
For $i,k\ge 1$ and $2m\ge max(2,i,kd)$, there is a $\MCG(M)$-equivariant isomorphism
\begin{equation*}
    H^i(C_k(M))\cong\widetilde{H}^{i+2mk}(\mapd(M;S^{d+2m}_{\Q})). 
\end{equation*}
\end{corollary}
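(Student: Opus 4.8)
The plan is to reduce the statement to Theorem~\ref{thm:maintheoremforgeneralmanifolds} by checking that, once $2m$ is large relative to $i$ and to $kd$, the $\Q^{*}$-action on $\widetilde H^{i+2mk}(\mapd(M;S^{d+2m}_{\Q}))$ is concentrated in weight $k$, so that the weightspace $(-)^{(k)}$ appearing in that theorem may simply be dropped.

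Concretely, I would first assemble the weight-refined form of the computation underlying Theorem~\ref{thm:maintheoremforgeneralmanifolds}. Combining the cohomological splitting of $\widetilde H^{*}(\Gamma_{\partial}(M;S^{2m}_{\Q}))$ established in the proof of Theorem~\ref{thm:weighttheorem} with the $\MCG(M)\times\Q^{*}$-equivariant isomorphism of Theorem~\ref{thm:trivialisationequivariance}, one gets an $\MCG(M)$-equivariant, weight-graded isomorphism
\begin{equation*}
    \widetilde H^{j}\big(\mapd(M;S^{d+2m}_{\Q})\big)\;\cong\;\bigoplus_{l\ge 1} H^{\,j-2ml}(C_{l}(M)),
\end{equation*}
in which the $l$-th summand lies entirely in weight $l$; the sum is finite since $H^{\,j-2ml}(C_{l}(M))=0$ once $2ml>j$.

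Then I would set $j=i+2mk$ and run a two-sided degree count to isolate the $l=k$ summand. For $l>k$ the summand $H^{\,i+2m(k-l)}(C_{l}(M))$ sits in degree $i+2m(k-l)\le i-2m$, which is negative once $2m>i$ (and when $i$ is odd this is already forced by $2m\ge i$, as $2m$ is even), so it vanishes. For $l<k$ the same summand sits in degree $i+2m(k-l)\ge i+2m>2m\ge kd>ld=\dim C_{l}(M)$, so it vanishes as well. Hence $\widetilde H^{i+2mk}(\mapd(M;S^{d+2m}_{\Q}))$ equals its own weight-$k$ part, which Theorem~\ref{thm:maintheoremforgeneralmanifolds} identifies $\MCG(M)$-equivariantly with $H^{i}(C_{k}(M))$; the remaining hypothesis $2m\ge 2$ is exactly what is needed ($m\ge 1$) to invoke that theorem and Theorems~\ref{thm:weighttheorem}--\ref{thm:trivialisationequivariance}.

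This is essentially bookkeeping layered on top of the two substantial results already in hand, so I do not expect a genuine obstacle. The only point requiring care is the joint choice of the bound on $m$: it must simultaneously push the $l>k$ contributions into negative cohomological degree and the $l<k$ contributions strictly above the top dimension $ld$ of $C_{l}(M)$ — which is precisely the role played by the three quantities in $\max(2,i,kd)$.
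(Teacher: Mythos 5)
Your argument is correct and is essentially the paper's own proof: the paper disposes of the corollary with the same one-line degree count, namely that in the splitting $\widetilde H^{j}(\mapd(M,S^{d+2m}_{\Q}))\cong\bigoplus_{l\ge 1}H^{j-2ml}(C_l(M))$ (from the proofs of Theorems \ref{thm:weighttheorem} and \ref{thm:trivialisationequivariance}) all summands with $l\neq k$ vanish because $H^*(C_l(M))$ is concentrated in degrees $0,\dots,ld$.

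The parenthetical worry you raise is, however, a real one, and you should not wave it away: the hypothesis only gives $2m\ge i$, and when $i$ is even the boundary case $2m=i$ is allowed (e.g.\ $d=2$, $k=1$, $i=2$, $m=1$). In that case the $l=k+1$ summand sits in degree $i-2m=0$ and contributes $H^0(C_{k+1}(M))=\Q$, so the right-hand side is $H^i(C_k(M))\oplus\Q$ and the stated isomorphism fails; one needs $2m>i$ (automatic for $i$ odd, as you note) for the degree count to close. This is an imprecision in the paper's hypothesis, whose own proof glosses over exactly the same point, rather than a defect peculiar to your argument -- but if you want a complete proof you should either strengthen the hypothesis to $2m>i$ or record that the even-degree boundary case must be excluded.
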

\begin{proof}
Follows from the observation that $\widetilde{H}^i(C_k(M))$ vanishes for $i<0$ and $i>kd$, since $C_k(M)$ is a $kd$-dimensional manifold.
\end{proof}

\section{Based maps to spheres}\label{sec:mapsfromwedges}
In order to apply Theorem \ref{thm:maintheoremforgeneralmanifolds} to $\Sigma_{g,1}$, we need to compute $H^*(\mapd(\Sigma_{g,1},S^{2m}_{\Q}))$ with its $\Gamma_{g,1}\times\Q^*$-action for $m\ge 2$. As an intermediate step, we relax our maps to be merely basepoint-preserving and compute the algebra $H^*(\map_*(\Sigma_{g,1},S^{2m}_{\Q}))$ with its $\Gamma_{g,1}\times\Q^*$-action. We do this, by venturing into the land of free groups and computing $H^*(\map_*(\vee_nS^1,S^{2m}_{\Q}))$ for all $n\ge 1$, functorially with all based maps of wedges of circles (Theorem \ref{thm:mapsfromwedges}). The answer is given in terms of the Johnson action (Definition \ref{def:algerbaswithJohnsonaction}).
In the end, we present analogous results with $K(\Q,m)$ as codomain (Theorem \ref{thm:mapstoEM}) and, in the appendix, we free the basepoint to compute $H^*(\map(\vee_nS^1,S^{2m}_{\Q}))$ equivariantly (Theorem \ref{thm:freemaps}).  

\subsection{The content of free words}\label{sec:content}
Denote by $\Z^{*n}$ the free product of $n$ copies of $\Z$, i.e. the free group of rank $n$ with a choice of basis $\alpha_1,...,\alpha_n$, coming from each copy of $\Z$. The \textit{length} $l(w)$ of a word $w\in\Z^{*n}$ is measured with respect to this basis. 
Write $[-]:\Z^{*n}\to \Z^{n}$ for the abelianisation map, giving $\Z^n$ the standard basis $e_i:=[\alpha_i]$, $i=1,...,n$. Recall that the \textit{second exterior power} of a $\Z$-module $M$ is the $\Z$-module $\Lambda^2M=M\otimes_{\Z}M/\langle v\otimes v:v\in M\rangle$ in which the image of $v\otimes u$ is written as $v\wedge u$.
Write $\A=\{\alpha_1^{\pm},...,\alpha_n^{\pm}\}$ for the set of length-one words of $\Z^{*n}$.

\begin{definition}
The \textit{content} is the function \begin{equation*}
    c:\Z^{*n}\to \Lambda^2\Z^{n}
\end{equation*} defined by
\begin{equation}
    c(w)=\sum_{\mathclap{1\le i<j\le k}} \hspace{6pt} [w_i]\wedge [w_j]
\end{equation}
for each word $w=w_1w_2...w_k\in\Z^{*n}$ with $k\ge 0$ and $w_i\in \A$ for all $i$.
\label{def:content}
\end{definition}

\begin{proposition}
The content is well-defined. 
\end{proposition}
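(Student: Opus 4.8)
The plan is to show that the formula for $c(w)$ does not depend on the way $w$ is written as a product of letters in $\A$, i.e.\ that $c$ is invariant under the two defining moves of the free group: inserting or deleting a pair $\alpha_i \alpha_i^{-1}$ (or $\alpha_i^{-1}\alpha_i$) anywhere in the word. Since any two expressions of the same element of $\Z^{*n}$ as a product of letters are related by a finite sequence of such moves (together with trivial reassociations, which change nothing), invariance under a single such move suffices.

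So fix a word $w = w_1 \cdots w_k$ with all $w_i \in \A$, fix a position $p$ with $0 \le p \le k$, and let $a \in \A$; form the expanded word $w' = w_1 \cdots w_p \, a \, a^{-1} \, w_{p+1} \cdots w_k$ of length $k+2$. First I would observe that $[a^{-1}] = -[a]$ in $\Z^n$, so the two new letters have opposite images under $[-]$. Now compute $c(w')$ by splitting the sum $\sum_{i<j}[w'_i]\wedge[w'_j]$ according to how many of the indices $i,j$ land among the two inserted positions. The terms with neither index inserted reproduce $c(w)$ exactly. The single term with both inserted indices is $[a]\wedge[a^{-1}] = [a]\wedge(-[a]) = -([a]\wedge[a]) = 0$ in $\Lambda^2\Z^n$. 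The remaining terms pair one inserted letter with one original letter $w_\ell$: for each $\ell$ this contributes $[a]\wedge[w_\ell] + [a^{-1}]\wedge[w_\ell] = [a]\wedge[w_\ell] - [a]\wedge[w_\ell] = 0$ (the ordering of $\ell$ relative to $p$ only flips whether we write $[w_\ell]\wedge[a]$ or $[a]\wedge[w_\ell]$, which is an overall sign that is immaterial since the two inserted contributions cancel regardless). Hence $c(w') = c(w)$, and by symmetry (reading the move backwards) deletion of a cancelling pair also preserves $c$.

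Finally I would note that the empty word has $c = 0$ and that reduced words are unique, so $c$ descends to a well-defined function on $\Z^{*n}$; explicitly, for any $w \in \Z^{*n}$ with reduced expression $w = w_1 \cdots w_k$, $c(w)$ is computed by the displayed formula, and the paragraph above shows the same value is obtained from any non-reduced expression. I do not expect any real obstacle here: the only thing to be careful about is bookkeeping the sign conventions in $\Lambda^2\Z^n$ (that $v \wedge v = 0$ forces $u \wedge v = -v \wedge u$), and making sure the cancellation argument is run for both $a\,a^{-1}$ and $a^{-1}a$ insertions at an arbitrary position — but both are handled verbatim by the computation above since it only uses $[a^{-1}] = -[a]$.
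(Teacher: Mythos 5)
Your proposal is correct and follows essentially the same approach as the paper: both reduce well-definedness to invariance under insertion/deletion of an adjacent cancelling pair and verify it by the same term-by-term cancellation using $[a^{-1}]=-[a]$ and $[a]\wedge[a]=0$. The only cosmetic difference is that you phrase the move as insertion while the paper phrases it as cancellation of two consecutive inverse letters, which is equivalent.
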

\begin{proof}
It suffices to check that if two consecutive letters are inverse, say $w_p^{-1}=w_{p+1}$, then $c(w)$ equals the content of the reduced word $w'=w_1...w_{p-1}w_{p+2}...w_k$  after cancelling $w_p,w_{p+1}$. The difference $c(w)-c(w')$ is by definition
\begin{small}
\begin{equation*}
    c(w)=\sum_{\mathclap{1\le i< p}}  \hspace{6pt} ([w_i]\wedge [w_p]+[w_i]\wedge [w_{p+1}])+[w_p]\wedge [w_{p+1}] +\sum_{\mathclap{p+1< j\le k}}  \hspace{6pt} ([w_p]\wedge [w_j]+[w_{p+1}]\wedge [w_j])
\end{equation*}
\end{small}
and $[w_{p+1}]=[w_p^{-1}]=-[w_p]$ so all three summands vanish.
\end{proof}
\begin{remark}
The content depends on the choice of basis of the free group $\Z^{*n}$. For example, if we wrote $c_{\alpha}$ for the content defined above and $c_{\beta}$ for the content defined using the basis $\beta_1:=\alpha_1\alpha_2$, $\beta_{i}=\alpha_i$ for $i=2,...,n$, then we would have $c_{\alpha}(\beta_1)=e_1\wedge e_2$ whereas $c_{\beta}(\beta_1)=0$.  
\label{rmk:contentnoncanonical}
\end{remark}
\begin{proposition}\label{prop:propertiesofcontent}
The content $c$ satisfies: 
      \begin{enumerate}
        \item $c(a)=0$ if $a=1$ or $a\in \A$;
        
        \item $c(a^{-1})=-c(a)$ for all $a\in \Z^{*n}$;
        
        \item $c(ab)=c(a)+c(b)+[a]\wedge[b]$ for all $a,b\in \Z^{*n}$;
        
        \item $c([a,b])=2[a]\wedge[b]$ where $[a,b]:=aba^{-1}b^{-1}$ is the commutator;
        
        \item $c([a,[b,c]])=0$ for all $a,b\in \Z^{*n}$ i.e. $c$ vanishes on double commutators. 
      \end{enumerate}
\end{proposition}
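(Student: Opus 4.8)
The plan is to prove the five identities essentially in the order they are stated, since each builds on the previous ones, and the real content (so to speak) is in item (3), from which the rest follow formally. First, item (1) is immediate from the definition: the empty word and any single letter $a \in \A$ have no pairs $i<j$, so the defining sum is empty. For item (3), I would take reduced words $a = a_1\cdots a_p$ and $b = b_1 \cdots b_q$ with all $a_i, b_j \in \A$, and compute $c(ab)$ by splitting the index set of the product word $a_1\cdots a_p b_1 \cdots b_q$ into pairs lying inside the $a$-block, pairs inside the $b$-block, and mixed pairs. The first two contributions are exactly $c(a)$ and $c(b)$, and the mixed contribution is $\sum_{i,j}[a_i]\wedge[b_j] = \bigl(\sum_i [a_i]\bigr) \wedge \bigl(\sum_j [b_j]\bigr) = [a]\wedge[b]$, using bilinearity of $\wedge$ and that $[-]$ is the abelianisation. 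A small point to handle: the concatenation $a_1\cdots a_p b_1\cdots b_q$ need not be reduced, but the content is well-defined (by the preceding Proposition) so we may compute with this unreduced expression; alternatively, one checks that cancellations of an $a_p$ against a $b_1$ remove matching terms from $c(a)$, $c(b)$, and $[a]\wedge[b]$ consistently. This well-definedness remark is the one genuine subtlety, and it is exactly what I expect to be the main (though minor) obstacle — everything else is formal bilinear algebra.

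Given (3), item (2) follows by setting $b = a^{-1}$: then $ab = 1$, so $0 = c(1) = c(a) + c(a^{-1}) + [a]\wedge[a^{-1}]$, and $[a]\wedge[a^{-1}] = [a]\wedge(-[a]) = -[a]\wedge[a] = 0$, giving $c(a^{-1}) = -c(a)$. For item (4), apply (3) repeatedly to $[a,b] = aba^{-1}b^{-1}$:
\begin{equation*}
    c([a,b]) = c(a)+c(b)+c(a^{-1})+c(b^{-1}) + [a]\wedge[b] + [ab]\wedge[a^{-1}] + [aba^{-1}]\wedge[b^{-1}].
\end{equation*}
By (2) the first four terms cancel in pairs. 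For the cross terms, note $[ab] = [a]+[b]$, $[a^{-1}]=-[a]$, $[aba^{-1}] = [b]$, $[b^{-1}]=-[b]$, so $[ab]\wedge[a^{-1}] = ([a]+[b])\wedge(-[a]) = -[b]\wedge[a] = [a]\wedge[b]$ and $[aba^{-1}]\wedge[b^{-1}] = [b]\wedge(-[b]) = 0$. Summing: $c([a,b]) = [a]\wedge[b] + [a]\wedge[b] = 2[a]\wedge[b]$.

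Finally, for item (5), write $u = [b,c]$ and apply (4) to get $c([a,u]) = 2[a]\wedge[u]$; but $[u] = [[b,c]]$ is the image in the abelianisation $\Z^n$ of a commutator, hence zero. Therefore $c([a,[b,c]]) = 2[a]\wedge 0 = 0$. This completes all five parts. The only step requiring care is the bookkeeping in (3) together with the appeal to well-definedness of $c$; once that is in place, (2), (4), (5) are purely mechanical consequences obtained by tracking abelianised images through $\Lambda^2\Z^n$, where the alternating and bilinear relations do all the work.
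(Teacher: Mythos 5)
Your proposal is correct and follows essentially the same route as the paper: (1)--(3) by direct computation from the definition (with the well-definedness of $c$ justifying working with unreduced concatenations), (4) by applying (3) repeatedly and cancelling via (2), and (5) from (4) since commutators vanish under abelianisation. Deriving (2) from (3) with $b=a^{-1}$ rather than directly from the definition is a trivial variation and equally valid.
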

\begin{proof}
Properties (1),(2),(3) follow immediately from the definition by rearranging indices and using $[w^{-1}]=-[w]$ for all $w\in \Z^{*n}$. Property (4) follows by writing $[a,b]=aba^{-1}b^{-1}$, using property (3) three times to obtain 
\begin{align*}
    &c([a,b])=c(a)+c(b)+c(a^{-1})+c(b)\\
    &+[a]\wedge[b]+[a]\wedge[b^{-1}]+[a]\wedge[a^{-1}]+[b]\wedge[a^{-1}]+[b]\wedge[b^{-1}]+[a^{-1}]\wedge[b^{-1}]
\end{align*}
and cancelling using property (2). Finally, (5) follows from by applying (4) to get $c([a,[b,c])=2[a]\wedge[[b,c]]=0$ because commutators vanish in the abelianisation. 
\end{proof}

\begin{example}
For $n=2g$ and $\zeta=a_1a_2a_1^{-1}a_2^{-1}...a_{2g-1}a_{2g}a_{2g-1}^{-1}a_{2g}^{-1}$,
\begin{equation}
    c(\zeta)=2\big(e_1\wedge e_2+...+e_{2g-1}\wedge e_{2g}\big).
\end{equation}
The word $\zeta$ represents 
the boundary loop of $\Sigma_{g,1}$ in a standard basis of $\pi_1(\Sigma_{g,1})$. This computation will be used in the proof Lemma \ref{lem:sssofr}.
\label{ex:boundarywordcontent}
\end{example}

\subsection{The functor $H^*(\map_*(-,S^{2m}_{\Q}))$ on wedges of circles}\label{sec:xi}
Let $\hom(G,H)$ be the set of group homomorphisms between two groups $G,H$. 
\begin{definition}
    Let $\mathcal{F}$ be the category whose objects are the free groups $\Z^{*n}$, for $n\ge 0$, and morphisms are all group homomorphisms $\phi\in \hom(\Z^{*n}, \Z^{*m})$, for $n,m\ge 0$.
    \label{def:functoroffreegroups}
\end{definition}
We will be interested interested in functors from $\F$. Some examples are
\begin{itemize}
    \item the functor into graded commutative algebras $$\Z^n\to H^*(\map_*(\vee_nS^1,S^{2m}_{\Q})),$$ since $$\pi_0(\map_*(\vee_nS^1,\vee_{n'}S^1))=\hom(\Z^{*n},\Z^{*n'});$$
    
    \item the \textit{abelianisation} functor $[-]:\Z^{*}\to \Z^n$ into the category of abelian groups;
    
    \item writing $x_1,...,x_n\in \Q^n$ for the standard basis, the \textit{exterior algebra} functor $$\Z^n\to \Lambda[x_1,...,x_n]$$ 
    into the category of algebras, with the abelianisation action extended as algebras.
\end{itemize}

\begin{definition}[The function $\xi$]
For $n,n'\ge 1$, define  $$\xi:\hom(\Z^{*n},\Z^{*n'})\to \hom(\Q^{n}, \Lambda^2\Q^{n'})$$ by 
$$\xi(\phi)(e_i)=c(\phi(\alpha_i))$$
on the basis $\{e_i\}$ of $\Q^n$, and extended $\Q$-linearly. 
\label{def:xi}
\end{definition}
\begin{remark}
A priori, $\xi$ is merely a function, without extra properties.
\end{remark}

We henceforth fix $m\ge 1$ and $\mu=2m-1$.
\begin{definition} For $n\ge 0$, define the weighted graded commutative algebra
\begin{equation}
     A_{n}^{*,(*)}=A_{n,m}^{*,(*)}:=\Lambda[x_1,...,x_n]\otimes\Q[y_1,...,y_{2g}]
\end{equation}
with degrees/weights $|x_i|=(\mu,(1))$ and $|y_i|=(2\mu,(2))$. 

For each $\phi\in \hom(\Z^{*n},\Z^{*n'})$, define the map 
$J(\phi):A_{n}^{*,(*)}\to A_{n'}^{*,(*)}$
on the generators
\begin{center}
    $J(\phi)(x_i)=\{[\phi](e_i)\}_x$
\end{center}
and
\begin{center}
    $J(\phi)(y_i)=\{[\phi](e_i)\}_y+\{\xi(\phi)(e_i)\}_x,$
\end{center}
and extend it as an algebra map. Here, for an expression $p(e_i)$ in terms of the basis $e_i\in\Q^n$, we denote by $\{\alpha\}_x$, $\{\alpha\}_y$ the corresponding expression in terms of the bases $x_i$, $y_i$, respectively. 
\label{def:johnsonaction}
\end{definition}

Now, with $\alpha\in\pi_1(S^1)$ the generator given by the identity map, we obtain a canonical identification $\Z^{*n}=\pi_1(\vee_nS^1)$, so that $\alpha_i$ is the generating loop of the $i$-th circle. To simplify notation, for each $\phi\in\map_*(\vee_nS^1,\vee_{n'}S^1)$, we also denote by $\phi$ its homotopy class $$ \pi_0(\map_*(\vee_nS^1,\vee_{n'}S^1))=\hom(\Z^{*n},\Z^{*n'})$$ 
and by $\phi^*$ the induced map
$$\phi^*:H^*(\map_*(\vee_nS^1,S^{2m}_{\Q}))\to H^*(\map_*(\vee_{n'}S^1,S^{2m}_{\Q})).$$
Since $\phi^*$ depends only on the homotopy class of $\phi$, this will not cause confusion. 

\begin{theorem}
For every $n\ge 1$, the sphere action on $H^*(\map_*(\vee_nS^1,S^{2m}_{\Q}))$ is purely by weights and there are weighted graded algebra isomorphisms
$$H^*(\map_*(\vee_nS^1,S^{2m}_{\Q}))\cong A_{n,m}^{*,(*)}.$$

Under these isomorphisms, for any $n,n'\ge 1$ and map $\phi\in \map_*(\vee_nS^1,\vee_{n'}S^1)$ the induced map $\phi^*$ on $H^*(\map_*(-,S^{2m}_{\Q}))$ is given by $\phi^*= J(\phi)$.
\label{thm:mapsfromwedges}
\end{theorem}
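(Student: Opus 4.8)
The plan is to compute $H^*(\map_*(\vee_n S^1, S^{2m}_{\Q}))$ by a standard rational-homotopy/Federer-spectral-sequence argument, but with enough care to track the $\mathcal{F}$-functoriality and the sphere action. First I would observe that $\map_*(\vee_n S^1, Y) \simeq (\Omega Y)^n$ naturally: a based map out of $\vee_n S^1$ is just an $n$-tuple of loops, and this equivalence is natural in based maps of the wedge (covariantly) and of $Y$ (contravariantly). Taking $Y = S^{2m}_{\Q}$, the space $\Omega S^{2m}_{\Q}$ is rationally $S^{2m-1}_{\Q} \times \Omega S^{4m-1}_{\Q}$ by the James splitting, so its rational cohomology is $\Lambda[\mathrm{x}] \otimes \Q[\mathrm{y}]$ with $|\mathrm{x}| = 2m-1 = \mu$ and $|\mathrm{y}| = 4m-2 = 2\mu$. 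Hence by Künneth, $H^*(\map_*(\vee_n S^1, S^{2m}_{\Q})) \cong \Lambda[x_1,\dots,x_n]\otimes\Q[y_1,\dots,y_n] = A_{n,m}^{*}$ as an algebra, where $x_i, y_i$ come from the $i$-th loop factor. The weighting/sphere-action is governed by the fact that postcomposition by a degree-$q$ self-map of $S^{2m}_{\Q}$ multiplies $\mathrm{x}$ by $q$ and $\mathrm{y}$ by $q^2$ on $H^*(\Omega S^{2m}_{\Q})$ (since $\mathrm{y}$ is detected by the Whitehead-square/Hopf-invariant data quadratic in the fundamental class); this gives weights $(1)$ on $x_i$ and $(2)$ on $y_i$ and shows the action is purely by weights, matching the stated bigrading on $A_{n,m}^{*,(*)}$.

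The heart of the theorem is the functoriality formula $\phi^* = J(\phi)$, and this is where the content function $\xi$ enters. Since everything is an algebra map determined on generators, it suffices to compute $\phi^*(x_i)$ and $\phi^*(y_i)$ for $\phi \in \hom(\Z^{*n},\Z^{*n'})$, and by naturality it is enough to treat the universal-looking case, i.e. understand $\phi^*$ on the single generators $x, y \in H^*(\Omega S^{2m}_{\Q})$ pulled back along the map $\Omega(\vee_{n'}S^1)^{\text{--}} \to \Omega S^{2m}_{\Q}$ determined by a word $w = \phi(\alpha_i) \in \Z^{*n'}$. Concretely: a word $w$ of length $k$ in the letters $\alpha_j^{\pm}$ gives a based map $S^1 \to \vee_{n'}S^1$, hence on loop spaces and on rational cohomology a map $A_{1,m} \to A_{n',m}$. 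On the degree-$\mu$ class $x$ this is just the abelianisation: $x \mapsto \{[w]\}_x = \sum_j a_j x_j$ where $[w] = \sum a_j e_j$. The subtle part is the degree-$2\mu$ class $y$: because $H^*(\Omega S^{2m}_{\Q})$ is a free graded-commutative algebra but the map $S^1 \to \vee_{n'} S^1$ is built by \emph{concatenation} of letters, the class $y$ pulls back to $\{[w]\}_y$ plus a correction term which is exactly the sum over pairs of letters $[w_p]\wedge[w_q]$, $p<q$, reinterpreted in the $x_j$'s — i.e. $\{\xi(\phi)(e_i)\}_x = \{c(w)\}_x$. This is the content function by construction, and it is precisely the term whose well-definedness (independence of reduced-word representative) was verified in the previous subsection.

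I expect the main obstacle to be making the computation of the correction term for $y$ both correct and clean. The cleanest route is probably a dual/homological one: $H_*(\Omega S^{2m}_{\Q}; \Q)$ is the free graded-commutative coalgebra, or better, use that $\Omega \Sigma(\text{--})$ on a wedge of circles is a product of free loop-space pieces and the pinch/concatenation maps are understood via the James/Bott–Samelson model, so that the coproduct structure makes the "$p<q$" sum manifest as the effect of the reduced diagonal. Alternatively, one argues by a short induction on word length using property (3) of the content, $c(ab) = c(a) + c(b) + [a]\wedge[b]$: the length-one and inverse cases ($c(\alpha_j^{\pm}) = 0$) correspond to the elementary inclusions $S^1 \hookrightarrow \vee_{n'}S^1$ inducing $x \mapsto \pm x_j$, $y \mapsto y_j$ with no correction, and the concatenation step mirrors the comultiplication $S^1 \to S^1 \vee S^1$, under which the class $y$ (being the "Hopf-invariant-type" class) picks up exactly the cross-term $x' \cdot x''$ from the two halves — this matches the $[a]\wedge[b]$ in property (3). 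One then checks $J$ respects composition (so it is genuinely a functor on $\mathcal{F}$), which reduces to the cocycle-type identity for $\xi$ that follows formally from property (3); and finally that $J$ is compatible with the weighting, which is immediate from the degree count. Verifying that the correction term is $\Gamma_{g,1}$-agnostic at this stage — i.e. that we really get the $\mathcal{F}$-functor and not just the mapping-class-group action — is the bookkeeping that needs the most care, but no single step is deep once the James splitting and the comultiplication computation are in hand.
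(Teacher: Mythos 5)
Your proposal follows essentially the same route as the paper: identify $\map_*(\vee_nS^1,S^{2m}_{\Q})\simeq(\Omega S^{2m}_{\Q})^{\times n}$, read off the algebra, K\"unneth decomposition and weights from the Hopf algebra $H^*(\Omega S^{2m})$ (with $y$ dual to the square of the Pontrjagin generator, whence weight $2$), and compute $\phi^*$ on generators by reducing to a single word and inducting on word length using the loop-concatenation coproduct $C^*(y)=y\otimes 1+x\otimes x+1\otimes y$, which is exactly how the content function appears. The only slip is in the inverse-letter case: loop inversion gives $y\mapsto -y_j$ (not $y\mapsto y_j$), matching $J(\phi)(y)=-y_j$ for $\phi(\alpha)=\alpha_j^{-1}$, and this is harmless for the rest of the argument.
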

\begin{proof}
We recall a few facts about the Hopf algebra $H^*(\Omega S^{2m}_{\Q})\cong H^*(\Omega S^{2m})$ (see e.g. \cite{Hatcher}). First, as graded commutative algebras,
$$H^*(\Omega S^{2m})\cong S[x,y]=\Lambda[x]\otimes \Q[y],$$ with degrees $|x|=\mu,|y|=2\mu$. We choose the generators $x$ and $y$ as follows. The Pontrjagin product of $\Omega S^{2m}$ makes $H_*(\Omega S^{2m};\Z)$ into the polynomial ring $\mathbb{Z}[t]$ with $|t|=\mu$, where $t$ is the Hurewicz image of the generator of $\pi_{2m-1}(\Omega S^{2m})$ adjoint to the identity map of $S^{2m}$. We define $x,y$ so that they evaluate to $1$ on $t,t^2$ respectively. The coproduct of 
$H^*(\Omega S^{2m})$ is induced by the loop-concatenation map $C:\Omega S^{2m}\times \Omega S^{2m}\to \Omega S^{2m}$ and given by $C^*(x)= x\otimes 1+1\otimes x$ and $C^*(y)= y\otimes 1+x\otimes x+1\otimes y$, and the involution is induced by the the loop-inversion map $I:\Omega S^{2m}\to \Omega S^{2m}$ and given by
$I^*(x)=-x$ and $I^*(y)=-y$.

Now, acting on $\Omega S^{2m}$ by a based map of $S^{2m}$ of degree $n$ multiplies $t$ by $n$ and thus $t^2$ by $n^2$. By dualising this, we conclude that the sphere action on $H^*(\Omega S^{2m}_{\Q})=S[x,y]$ is by weight $1$ on $x$ and weight $2$ on $y$. We identify $\map_*(S^1,S^{2m}_{\Q})=\Omega S^{2m}_{\Q}$ and hence
\begin{equation}
    \map_*(\vee_nS^1,S^{2m}_{\Q})=\big(\map_*(S^1,S^{2m}_{\Q})\big)^{\times n}=(\Omega S^{2m}_{\Q})^{\times n},
\label{eq:productofloopspaces}
\end{equation}
so by the K\"unneth theorem we obtain the $\Q^*$-equivariant identification
$$H^*(\map_*(\vee_nS^1,S^{2m}_{\Q}))=\Lambda[x_1,...,x_n]\otimes \Q[y_1,...,y_n]$$
with degrees $|x_i|=\mu$, $|y_i|=2\mu$ and weights $W(x_i)=1$, $W(y_i)=2$. The $x_i,y_i$ are the pullbacks of $x,y$ via the projection $p_i$ to the $i$-th factor in \eqref{eq:productofloopspaces}.

It remains to prove that the action of $\phi\in \map_*(\vee_nS^1,\vee_{n'}S^1)$ is given by $J(\phi)$ on the $x_i$ and $y_i$. Using the decomposition \eqref{eq:productofloopspaces}, it suffices to check only for $n=1$. In this case, the map $\phi:S^1\to \vee_nS^1$ induces $\Omega\phi:\big(\Omega S^{2m}_{\Q}\big)^{\times n}\to \Omega S^{2m}_{\Q}$ which is homotopic to a sequence of compositions of $C$ and $I$, as prescribed by the word $w=\phi(\alpha)\in \Z^{*n}$. More specifically, if $w=\alpha_{i_1}^{\varepsilon_1}\alpha_{i_2}^{\varepsilon_2}...a_{i_l}^{\varepsilon_l}$ with $\varepsilon_i\in\{\pm\}$, then
\begin{equation}
    \Omega\phi\simeq C\circ \big(\iota_{i_1}^{\varepsilon_1}\times \big(C\circ\big(\iota_{i_2}^{\varepsilon_2}\times\big(...\times \iota_{i_l}^{\varepsilon_l}\big)...\big)\big)\big)
    \label{eq:thecomposition}
\end{equation} 
where $\iota_i$ is the inclusion of the $i$-th factor in \eqref{eq:productofloopspaces}, $\iota_i^+=\iota_i$ and $\iota_i^-=\iota_i\circ I$. An induction on the length $l$ of $w$ concludes that the right hand side composition $C_{\phi}$ in \eqref{eq:thecomposition} maps $C_{\phi}^*(x)=\tau_x(w)$ and $C_{\phi}^*(y)=\tau_y(w)+\xi(\phi,w)$, that is, by $J(\phi)$, as desired. The inductive step uses the definition of the content \ref{def:content} and the observation that $(\iota_{i}^{\varepsilon})^*$ maps $x\mapsto \varepsilon x_i$ and $y\mapsto\varepsilon y_i$, so $\big(C\circ (f\times \iota_{i}^{\varepsilon})\big)^*$ maps $x\mapsto f^*(x)+\varepsilon x_i$ and $y\mapsto f^*(y)+\varepsilon y_i+f^*(x)\wedge \varepsilon x_i$.
\end{proof}
\begin{remark}
For a general space $X$, the spaces $H^*(\map_*(\vee_nS^1,X))$ together with their functorial free group structure can be determined using the Hopf-structure of $H^*(\Omega X)$, see \cite{kassabovHopfalgberas}. 
\end{remark}
\begin{corollary}
The mapping $\Z^{*n}\to A^{*,(*)}_n$ and $\phi\in \hom(\Z^{*n},\Z^{*n'})\to J(\phi)$ is a functor from $\F$ to the category of weighted graded commutative algebras.
\label{cor:Jphiisafunctoroverfreegroups}
\end{corollary}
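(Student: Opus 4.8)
The plan is to obtain Corollary~\ref{cor:Jphiisafunctoroverfreegroups} almost for free from Theorem~\ref{thm:mapsfromwedges}, by transport of structure, and then to indicate a direct combinatorial check as a sanity test.

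\emph{Functoriality via Theorem~\ref{thm:mapsfromwedges}.} I would first observe that $\phi\mapsto\phi^*$ is already a functor from $\F$ to weighted graded commutative algebras, independently of any explicit formula. Indeed, sending $\Z^{*n}=\pi_1(\vee_nS^1)$ to the space $\map_*(\vee_nS^1,S^{2m}_\Q)$ and a homomorphism $\phi\in\hom(\Z^{*n},\Z^{*n'})$ to the precomposition map $(-)\circ\phi$ is contravariantly functorial (a composite of based maps of wedges of circles induces on $\pi_1$ the composite homomorphism, and precomposition reverses composition), and composing with the contravariant functor $H^*(-)$ reverses the order a second time; hence $(\psi\phi)^*=\psi^*\circ\phi^*$ and $\id^*=\id$, with the weighting natural because the sphere action is. Theorem~\ref{thm:mapsfromwedges} says precisely that, under the fixed isomorphisms $H^*(\map_*(\vee_nS^1,S^{2m}_\Q))\cong A_{n}^{*,(*)}$ of that theorem, this functor is carried to $\phi\mapsto J(\phi)$. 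Transporting the functor structure through these isomorphisms then shows $J$ is a functor, and that each $J(\phi)$ preserves degree and weight is visible directly from Definition~\ref{def:johnsonaction} (the $y_i$ have bidegree $(2\mu,(2))$, matching both the $y_j$ and the products $x_jx_k$).

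\emph{Direct check.} Alternatively, to keep the combinatorics self-contained, I would verify the two functor axioms for $J$ by hand; since $J(\phi)$ is an algebra map it suffices to check them on the generators $x_i,y_i$. The identity $J(\id)=\id$ is immediate from $c(\alpha_i)=0$ (Proposition~\ref{prop:propertiesofcontent}(1)). For composable $\phi\in\hom(\Z^{*n},\Z^{*n'})$, $\psi\in\hom(\Z^{*n'},\Z^{*n''})$, unwinding Definition~\ref{def:johnsonaction} shows that $J(\psi\phi)=J(\psi)\circ J(\phi)$ holds on the $x_i$ by functoriality of abelianisation, while on the $y_i$ it is equivalent to the cocycle relation
\begin{equation*}
    \xi(\psi\phi)=\xi(\psi)\circ[\phi]+\Lambda^2[\psi]\circ\xi(\phi)\colon\Q^{n}\longrightarrow\Lambda^2\Q^{n''}.
\end{equation*}
To prove this I would write $\phi(\alpha_i)=w_1w_2\cdots w_k$ with each $w_l\in\A$, so that $\psi\phi(\alpha_i)=\psi(w_1)\cdots\psi(w_k)$, iterate Proposition~\ref{prop:propertiesofcontent}(3) to get
\begin{equation*}
    c\big(\psi\phi(\alpha_i)\big)=\sum_{l}c\big(\psi(w_l)\big)+\sum_{l<l'}[\psi(w_l)]\wedge[\psi(w_{l'})],
\end{equation*}
and then substitute $c(\psi(w_l))=\xi(\psi)([w_l])$ and $[\psi(w_l)]=[\psi]([w_l])$, obtained from Proposition~\ref{prop:propertiesofcontent}(2) and $\Q$-linearity of $\xi(\psi)$; using $\sum_l[w_l]=[\phi](e_i)$ and $\sum_{l<l'}[w_l]\wedge[w_{l'}]=c(\phi(\alpha_i))=\xi(\phi)(e_i)$ then reads off the cocycle relation.

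The only delicate point is the sign bookkeeping when a letter $w_l$ is an inverse generator $\alpha_j^{-1}$, which is exactly what property~(2) of Proposition~\ref{prop:propertiesofcontent} and linearity of $\xi(\psi)$ absorb; I expect this to be routine. In short, the corollary is bookkeeping once Theorem~\ref{thm:mapsfromwedges} is in hand, and the main (small) obstacle is simply threading the composition conventions on the topological side through correctly.
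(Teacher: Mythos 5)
Your first argument is exactly the paper's proof: the assignment $\Z^{*n}\mapsto H^*(\map_*(\vee_nS^1,S^{2m}_{\Q}))$, $\phi\mapsto\phi^*$ is a priori a functor, and Theorem \ref{thm:mapsfromwedges} identifies it with $\phi\mapsto J(\phi)$, so functoriality transports across the fixed isomorphisms. Your supplementary direct verification via the cocycle relation for $\xi$ is also correct (it is essentially Corollary \ref{cor:xicrossedhom} in disguise), but it is not needed beyond the transport-of-structure argument.
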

\begin{proof}
The mapping from $\F$ to weighted graded commutative algebras with $\Z^{*n}\to H^*(\map_*(\vee_nS^1,S^{2m}_{\Q}))$ and $\phi\to \phi^*$ is a priori a functor and, in Theorem \ref{thm:mapsfromwedges}, we proved that this functor is naturally isomorphic to the desired mapping. 
\end{proof}

\subsection{The free Johnson representation and the Johnson action on algebras}\label{sec:JohnsonRep}
Now we fix $n\ge 1$ and recall some representation theory of $\Aut(\Z^{*n})$. The action of $\Aut(\Z^{*n})$ on the abelianisation $\Z^n=(\Z^{*n})_{ab}$ induces the short exact sequence of groups \begin{equation}
    1\to IA_n\to \Aut(\Z^{*n})\to \glnz\to 1,
\end{equation} 
where the \textit{free Torelli group} $IA_n$ is the kernel of this action. The rationalisation $H=(\Z^{*n})_{ab}\otimes \Q\simeq \Q^n$ is the \textit{standard linear representation}\footnote{We have used the same notation $H$ for the standard linear and standard symplectic representations, defined in the introduction. It should be clear from the context which one we mean.}  of $\Aut(\Z^{*n})$. Any $\Aut(\Z^{*n})$-representation that factors through $\glnz$ -- or, equivalently, with trivial $IA_n$-action -- is called \textit{linear}. The standard representation $H$ is linear, as is its dual $H^{\vee}$, and all subquotients of tensor powers of these two. 

\begin{proposition}[The free Johnson representation]
The linear maps $J(\phi)$, for each $\phi\in\Aut(\Z^{*n})$, make the vector space $$J:=H\oplus \Lambda^2H,$$ appearing in degree $2\mu$ of $A^*_{n}$,
into an $\Aut(\Z^{*n})$-representation which fits into a short exact sequence  
\begin{equation}
    \begin{tikzcd}[column sep=small]
        0\rar &\Lambda^2H\rar["i"]& J\rar & H\rar& 0.
    \end{tikzcd}
    \label{eq:definingextensionofJ}
\end{equation} 
of $\Aut(\Z^{*n})$-representations.
Furthermore, provided $n\ge 2$, the representation $J$ does not factor through $\glnz$, i.e. it is not linear, nor through $\Out(\Z^{*n})$.
\label{prop:JohnsonRep}
\end{proposition}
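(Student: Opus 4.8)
The plan is to get the representation structure essentially for free from Corollary~\ref{cor:Jphiisafunctoroverfreegroups} and to detect non-triviality with a single explicit content computation. First I would restrict the functor $\phi\mapsto J(\phi)$ to $\Aut(\Z^{*n})\subseteq\hom(\Z^{*n},\Z^{*n})$; since $J(\phi^{-1})$ is then a two-sided inverse of $J(\phi)$, this yields a group homomorphism from $\Aut(\Z^{*n})$ to the group of weighted graded algebra automorphisms of $A_{n}^{*,(*)}$. In particular every $J(\phi)$ preserves the degree-$2\mu$ summand, which, because $|x_i|=\mu$, $|y_i|=2\mu$, and $A_n$ is free on these generators, is precisely $\langle x_ix_j:i<j\rangle\oplus\langle y_1,\dots,y_n\rangle$, of dimension $\binom{n}{2}+n=\dim\Lambda^2H+\dim H$. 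This is the asserted $\Aut(\Z^{*n})$-action on $J=H\oplus\Lambda^2H$.

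For the short exact sequence: the degree-$\mu$ part $\langle x_1,\dots,x_n\rangle$ is $J(\phi)$-invariant with $J(\phi)(x_i)=\{[\phi](e_i)\}_x$, so it is a copy of the standard linear representation $H$; since $J(\phi)$ is an algebra map, the span of products $\langle x_ix_j:i<j\rangle$ is the second exterior power of that subspace, giving the equivariant inclusion $i\colon\Lambda^2H\hookrightarrow J$. The formula $J(\phi)(y_i)=\{[\phi](e_i)\}_y+\{\xi(\phi)(e_i)\}_x$ shows that the correction term lies in $\im(i)$, so modulo $\im(i)$ the $y_i$ transform by the standard representation; thus $y_i\mapsto e_i$ identifies $J/\Lambda^2H$ with $H$ equivariantly, completing the extension \eqref{eq:definingextensionofJ}.

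For the last two (non-)factorisation claims I would exhibit one \emph{inner} automorphism acting non-trivially on $J$: since $\Inn(\Z^{*n})\subseteq IA_n$, such an element simultaneously witnesses that $J$ is not pulled back from $\glnz$ (i.e.\ not linear) and that it is not pulled back from $\Out(\Z^{*n})$. Concretely, for $n\ge 2$ take $\phi=c_{\alpha_1}$, conjugation by $\alpha_1$, so $\phi(\alpha_i)=\alpha_1\alpha_i\alpha_1^{-1}$ and $[\phi]=\id$, hence $\phi\in IA_n$. Writing $\alpha_1\alpha_2\alpha_1^{-1}=[\alpha_1,\alpha_2]\,\alpha_2$ and applying Proposition~\ref{prop:propertiesofcontent}(1),(3),(4) together with $[\,[\alpha_1,\alpha_2]\,]=0$, one gets $\xi(\phi)(e_2)=c(\alpha_1\alpha_2\alpha_1^{-1})=c([\alpha_1,\alpha_2])+c(\alpha_2)=2\,e_1\wedge e_2\neq 0$, whence $J(\phi)(y_2)=y_2+2x_1x_2\neq y_2$.

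In truth there is no serious obstacle: the argument is a direct unwinding of Definition~\ref{def:johnsonaction} and Corollary~\ref{cor:Jphiisafunctoroverfreegroups}. The two points worth stating carefully are (a) the identification of the degree-$2\mu$ piece of the free algebra $A_{n}^{*,(*)}$, which pins down the vector-space shape $H\oplus\Lambda^2H$ of the extension and shows $\langle x_ix_j\rangle$ is a subrepresentation, and (b) the observation that one non-trivially acting inner automorphism defeats both factorisations at once — which also explains why the hypothesis has to be $n\ge 2$, namely so that the generator $x_2$, and hence the content correction term, is present.
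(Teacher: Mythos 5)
Your proposal is correct and follows essentially the same route as the paper: the representation structure and the extension are read off from the functoriality of $J(\phi)$ (Corollary \ref{cor:Jphiisafunctoroverfreegroups}) together with the degree decomposition of $A_n^{*,(*)}$, and the non-factorisation through $\glnz$ and $\Out(\Z^{*n})$ is detected by the very same inner automorphism, conjugation by $\alpha_1$, giving $J(\phi)(y_i)=y_i+2x_1x_i$. Your explicit content computation via $\alpha_1\alpha_2\alpha_1^{-1}=[\alpha_1,\alpha_2]\alpha_2$ and the remark that $\Inn(\Z^{*n})\subseteq IA_n$ kills both factorisations at once match the paper's argument.
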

\begin{proof}
From Corollary \ref{cor:Jphiisafunctoroverfreegroups}, the linear maps $J(\phi)$, for $\phi\in\Aut(\Z^{*n})$, define an action on $J:=A^{2\mu,(2)}_{n}=H\oplus \Lambda^2H$. The action is linear on the subspace  $\Lambda^2H\subset J$ and linear on $J/\Lambda^2H\cong H$, giving the claimed short exact sequence.

That it does not factor through $\glnz$ nor $\Out(\Z^{*n})$ can be shown by exhibiting an element $\phi\in \Inn(\Z^{*n})\subset \Aut(\Z^{*n})$ which acts non-trivially in $J$, as the subgroup $\Inn(\Z^{*n})$ lies in the kernel of both maps from $\Aut(\Z^{*n})$ to $\Out(\Z^{*n})$ and $\GL_n(\mathbb{Z})$. We choose $$\phi(a_i)=a_1a_ia_1^{-1}$$ for $i=1,...,n$. Then we have $\phi\cdot y_i=y_i+2x_1x_i\neq y_i$ for $i\ge 2$.
\end{proof}

\begin{corollary}
The map $\xi:\Aut(\Z^{*n})\to \hom(H, \Lambda^2H)$, from Definition \ref{def:xi}, is a crossed homomorphism. 
\label{cor:xicrossedhom}
\end{corollary}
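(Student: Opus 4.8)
The plan is to recognize that $\xi$ being a crossed homomorphism is a direct consequence of the functoriality established in Corollary \ref{cor:Jphiisafunctoroverfreegroups}, together with the explicit formula for $J(\phi)$ on the generators $y_i$. Recall that a crossed homomorphism $\xi:\Aut(\Z^{*n})\to \hom(H,\Lambda^2H)$ with respect to the natural action on $\hom(H,\Lambda^2H)$ (coming from the standard linear action on $H$ and $\Lambda^2H$) is a map satisfying the cocycle identity $\xi(\phi\psi) = \xi(\phi)\circ[\psi] + [\phi]_{\Lambda^2}\circ\xi(\psi)$, where $[\phi]$ denotes the action of $\phi$ on $H$ and $[\phi]_{\Lambda^2}$ the induced action on $\Lambda^2H$; equivalently, $\xi(\phi\psi)$ applied to $e_i$ equals $\xi(\phi)([\psi](e_i)) + [\phi]_{\Lambda^2}(\xi(\psi)(e_i))$.

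First I would apply the functoriality relation $J(\phi\psi) = J(\phi)\circ J(\psi)$ and evaluate both sides on the generator $y_i \in A_n^{2\mu,(2)}$. By Definition \ref{def:johnsonaction}, the right side $J(\phi)(J(\psi)(y_i)) = J(\phi)\big(\{[\psi](e_i)\}_y + \{\xi(\psi)(e_i)\}_x\big)$. Since $J(\phi)$ is an algebra map, it sends $\{[\psi](e_i)\}_y$ (a linear combination $\sum_j c_j y_j$) to $\sum_j c_j(\{[\phi](e_j)\}_y + \{\xi(\phi)(e_j)\}_x) = \{[\phi][\psi](e_i)\}_y + \{\xi(\phi)([\psi](e_i))\}_x$, using that $\xi(\phi)$ is $\Q$-linear. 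It sends the purely exterior term $\{\xi(\psi)(e_i)\}_x \in \Lambda^2\{x_j\}$ to $\{[\phi]_{\Lambda^2}(\xi(\psi)(e_i))\}_x$, since $J(\phi)$ acts on the $x_j$ through $[\phi]$ and hence on $\Lambda^2\{x_j\}$ through $[\phi]_{\Lambda^2}$. Meanwhile the left side $J(\phi\psi)(y_i) = \{[\phi\psi](e_i)\}_y + \{\xi(\phi\psi)(e_i)\}_x = \{[\phi][\psi](e_i)\}_y + \{\xi(\phi\psi)(e_i)\}_x$. Comparing the coefficients of the $y$-monomials confirms consistency, and comparing the coefficients of the degree-$2\mu$ exterior monomials in the $x_j$ yields exactly $\xi(\phi\psi)(e_i) = \xi(\phi)([\psi](e_i)) + [\phi]_{\Lambda^2}(\xi(\psi)(e_i))$, which is the crossed homomorphism identity. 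Extending $\Q$-linearly over $H$ gives the statement for all of $H$, not just the basis vectors.

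There is essentially no serious obstacle here; the only point requiring a little care is bookkeeping the two summands on the $y$-generators and tracking which action ($[\phi]$ on $H$ versus the induced $[\phi]_{\Lambda^2}$ on $\Lambda^2H$) governs each term, so that the resulting identity is matched against the correct notion of crossed homomorphism for the coefficient module $\hom(H,\Lambda^2H)$. One should also note explicitly that the action of $\Aut(\Z^{*n})$ on the coefficient module $\hom(H,\Lambda^2H)$ used in the definition of ``crossed homomorphism'' is the one induced by the \emph{linear} action, i.e. the action factoring through $\GL_n(\Z)$ via $[\phi]$ and $[\phi]_{\Lambda^2}$ — this is the content encoded in the purely-exterior part of the formula for $J(\phi)$. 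With that understood, the proof is a two-line computation: evaluate $J(\phi\psi)(y_i) = J(\phi)(J(\psi)(y_i))$ using Corollary \ref{cor:Jphiisafunctoroverfreegroups} and read off the $\{\,\cdot\,\}_x$ component.
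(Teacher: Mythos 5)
Your proof is correct and is essentially the paper's argument unpacked: the paper cites Proposition \ref{prop:JohnsonRep} (that the maps $J(\phi)$ make $J=H\oplus\Lambda^2H$ into an $\Aut(\Z^{*n})$-representation) together with the standard correspondence between such extensions and crossed homomorphisms, and your computation — evaluating $J(\phi\psi)=J(\phi)\circ J(\psi)$ on the $y_i$ via Corollary \ref{cor:Jphiisafunctoroverfreegroups} and reading off the $\{\,\cdot\,\}_x$ component — is exactly what that correspondence amounts to. Your explicit identity $\xi(\phi\psi)=\xi(\phi)\circ[\psi]+[\phi]_{\Lambda^2}\circ\xi(\psi)$ is the cocycle condition encoded by the extension, and your remark about fixing the module structure on $\hom(H,\Lambda^2H)$ addresses the only point the paper leaves implicit.
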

\begin{proof}
This follows immediately from Proposition \ref{prop:JohnsonRep} and the correspondence of extensions and crossed-homomorphisms.
\end{proof}

Recall the $\Aut(\Z^{*n})$-algebra $A_n^*=\Q[y_1,...,y_{n}]\otimes \Lambda[x_1,...,x_{n}]$ with the $J(\phi)$ action, for each $\phi\in \Aut(\Z^{*n})$. 
\begin{definition}[The Johnson action on the $y_i$]
    \label{def:algerbaswithJohnsonaction}
    Let $T$ be an $\Aut(\Z^{*n})$-algebra with a compatible $\Lambda[x_1,...,x_{n}]$-module structure. We specify a new $\Aut(\Z^{*n})$-algebra structure on the $\Q$-algebra $\Q[y_1,...,y_{n}]\otimes T$
    using the natural $\Aut(\Z^{*n})$-structure on the right hand side of $$\Q[y_1,...,y_{n}]\otimes T:=(\Q[y_1,...,y_{n}]\otimes \Lambda[x_1,...,x_{n}])\otimes_{\Lambda[x_1,...,x_{2g}]} T.$$
\end{definition}

\begin{theorem}
There is an isomorphism of weighted graded $\Aut(\Z^{*n})$-algebras
\begin{equation}
    H^*(\map_*(\vee_nS^1,S^{2m}_{\Q}))\cong \Q[y_1,...,y_{n}]\otimes \Lambda[x_1,...,x_{n}], 
\end{equation}
with degree/weight on the right given by $|x_i|=(\mu,(1))$ and $|y_i|=(2\mu,(2))$, and $\Aut(\Z^{*n})$-action the standard linear on the $x_i$ and the Johnson action on the $y_i$.
\label{thm:autfnmapsfromwedgestospheres}
\end{theorem}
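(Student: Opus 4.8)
The plan is to deduce Theorem~\ref{thm:autfnmapsfromwedgestospheres} directly from Theorem~\ref{thm:mapsfromwedges} by reconciling the two descriptions of the $\Aut(\Z^{*n})$-action on $A_n^* = \Q[y_1,\dots,y_n]\otimes\Lambda[x_1,\dots,x_n]$: the one given explicitly by the formulas for $J(\phi)$, and the one given abstractly by the Johnson action of Definition~\ref{def:algerbaswithJohnsonaction}. Since $\Aut(\Z^{*n})\subset\hom(\Z^{*n},\Z^{*n})$ are exactly the isomorphisms, Theorem~\ref{thm:mapsfromwedges} already supplies a weighted graded algebra isomorphism $H^*(\map_*(\vee_nS^1,S^{2m}_{\Q}))\cong A_{n,m}^{*,(*)}$ under which $\phi^*$ corresponds to $J(\phi)$ for every $\phi\in\Aut(\Z^{*n})$; so it remains only to identify the $J(\phi)$-action with the Johnson action and to observe that the $x_i$-action is the standard linear one.

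First I would record that the $x_i$ span the weight-$1$ (equivalently, degree-$\mu$) part of $A_n^*$ and that $J(\phi)(x_i) = \{[\phi](e_i)\}_x$ is by definition the abelianisation action, i.e. the standard linear representation $H$; this handles the $x_i$ and shows $\Lambda[x_1,\dots,x_n]$ is an $\Aut(\Z^{*n})$-subalgebra carrying the standard linear action, so it makes sense as the module $T$ (with $T=\Lambda[x_1,\dots,x_n]$ acting on itself) in Definition~\ref{def:algerbaswithJohnsonaction}. Next, to compare the $y_i$-actions, I would take the tautological presentation $\Q[y_1,\dots,y_n]\otimes\Lambda[x_1,\dots,x_n] = (\Q[y_1,\dots,y_n]\otimes\Lambda[x_1,\dots,x_n])\otimes_{\Lambda[x_1,\dots,x_n]}\Lambda[x_1,\dots,x_n]$ and note that the Johnson action on $\Q[y_i]\otimes T$ for $T=\Lambda[x_1,\dots,x_n]$ is, by construction, nothing but the $\Aut(\Z^{*n})$-action on the left tensor factor $\Q[y_1,\dots,y_n]\otimes\Lambda[x_1,\dots,x_n]$ already fixed in Definition~\ref{def:johnsonaction}. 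In other words, the Johnson action on the $y_i$ \emph{is} by definition the $J(\phi)$-action once one unwinds Definition~\ref{def:algerbaswithJohnsonaction}; the only content is checking that the $J(\phi)$ genuinely make $A_n^*$ into an algebra with a compatible $\Lambda[x_1,\dots,x_n]$-module structure, which is exactly the statement of Corollary~\ref{cor:Jphiisafunctoroverfreegroups} (functoriality gives associativity and the algebra-map property of each $J(\phi)$).

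Concretely, I would verify: (i) each $J(\phi)$ is an algebra automorphism with $J(\phi)J(\psi) = J(\phi\psi)$ and $J(\id)=\id$ — immediate from Corollary~\ref{cor:Jphiisafunctoroverfreegroups}; (ii) the restriction of $J(\phi)$ to $\Lambda[x_1,\dots,x_n]$ preserves this subalgebra and equals the standard linear action — immediate from the formula $J(\phi)(x_i)=\{[\phi](e_i)\}_x$; (iii) since $A_n^* = \Q[y_1,\dots,y_n]\otimes\Lambda[x_1,\dots,x_n]$ is free as a $\Lambda[x_1,\dots,x_n]$-module on the monomials in the $y_i$, and $J(\phi)$ is $\Q$-linear and compatible with multiplication, the pair $(A_n^*, J)$ is precisely an instance of the data in Definition~\ref{def:algerbaswithJohnsonaction} with $T=\Lambda[x_1,\dots,x_n]$; and (iv) the resulting Johnson action recovers $J(\phi)(y_i) = \{[\phi](e_i)\}_y + \{\xi(\phi)(e_i)\}_x$, because the tensor-product-over-$\Lambda[x]$ description, applied to $T$ equal to $\Lambda[x]$ itself, is tautologically the original algebra with its original $J$-action. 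Assembling (i)--(iv) with the isomorphism of Theorem~\ref{thm:mapsfromwedges} gives the claimed weighted graded $\Aut(\Z^{*n})$-algebra isomorphism.

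The only genuine subtlety — and the place I would be most careful — is purely bookkeeping: making sure that Definition~\ref{def:algerbaswithJohnsonaction}, instantiated with the trivial choice $T=\Lambda[x_1,\dots,x_n]$, really does reproduce the very action $J$ it is (implicitly) modelled on, rather than some twist of it; this is a ``$1$ is a unit for $\otimes$'' observation, but it is the hinge of the argument. No hard analysis or topology enters beyond what Theorem~\ref{thm:mapsfromwedges} has already provided.
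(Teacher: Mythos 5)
Your proposal is correct and takes essentially the same route as the paper, whose entire proof is the single line ``Immediate from Theorem~\ref{thm:mapsfromwedges}''; your careful unwinding of Definition~\ref{def:algerbaswithJohnsonaction} with $T=\Lambda[x_1,\dots,x_n]$ just makes explicit the tautological bookkeeping the paper leaves implicit.
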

\begin{proof}
Immediate from Theorem \ref{thm:mapsfromwedges}.
\end{proof}

\subsection{Maps from surfaces}\label{sec:mapsfromsurfaces}
We fix genus $g\ge 1$ and shift our attention to mapping spaces from $\Sigma_{g,1}$. We fix a basepoint on the boundary $p\in \partial\Sigma_{g,1}$ and a based deformation retraction $i:\vee_{2g}S^1\to \Sigma_{g,1}$ that identifies $\pi_1(\Sigma_{g,1})=\Z^{*2g}$ so that the basis $\alpha_1,...,\alpha_{2g}$ is a \textit{standard symplectic basis}. 

The action of $\Gamma_{g,1}$ on $\pi_1(\Sigma_{g,1})=\Z^{*{2g}}$ defines a homomorphism $\rho:\Gamma_{g,1}\to\Aut(\Z^{*2g})$, that is injective with image
\begin{equation}
    \im \rho= \{\phi\in \Aut(\Z^{*2g}):\phi(\zeta)=\zeta\}
\end{equation}
where $\zeta\in \pi_1(\Sigma_{g,1})$ is the word corresponding to the boundary loop, mentioned in Example \ref{ex:boundarywordcontent}. In other words, $\Gamma_{g,1}$ can be viewed as a subgroup of $\Aut(\Z^{*n})$. Consequently, we define the \textit{symplectic crossed homomorphism $\xi$}, the \textit{symplectic Johnson representation $J$} and \textit{$\Gamma_{g,1}$-algebras with Johnson action on the $y_i$} by restricting Definition \ref{def:xi}, Proposition \ref{prop:JohnsonRep} and Definition \ref{def:algerbaswithJohnsonaction} to $\Gamma_{g,1}$ via $\rho$.

\begin{theorem}
The sphere action on $H^*(\map_*(\Sigma_{g,1},S^{2m}_{\Q}))$ is purely by weights and there is an isomorphism of weighted graded $\Gamma_{g,1}$-algebras
\begin{equation}
    H^*(\map_*(\Sigma_{g,1},S^{2m}_{\Q}))\cong \Q[y_1,...,y_{2g}]\otimes \Lambda[x_1,...,x_{2g}]. 
    \label{eq:mapsfromsurfacestospheres}
\end{equation}
with degree/weight on the right given by $|x_i|=(\mu,(1))$ and $|y_i|=(2\mu,(2))$, and $\Gamma_{g,1}$-action the standard symplectic on the $x_i$ and the Johnson action on the $y_i$.
\label{thm:mapsfromsurfaces}
\end{theorem}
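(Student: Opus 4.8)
The plan is to deduce Theorem \ref{thm:mapsfromsurfaces} from Theorem \ref{thm:autfnmapsfromwedgestospheres} (equivalently Theorem \ref{thm:mapsfromwedges}) using the based deformation retraction $i:\vee_{2g}S^1\to\Sigma_{g,1}$ fixed at the start of Section \ref{sec:mapsfromsurfaces}. First I would observe that $i$ is a based homotopy equivalence, so precomposition induces a homeomorphism-up-to-homotopy $i^*:\map_*(\Sigma_{g,1},S^{2m}_{\Q})\xrightarrow{\sim}\map_*(\vee_{2g}S^1,S^{2m}_{\Q})$, hence an isomorphism of graded algebras on cohomology. Since $i$ is a map of based spaces and the sphere action is by postcomposition, $i^*$ is automatically $\Q^*$-equivariant; thus the sphere action on $H^*(\map_*(\Sigma_{g,1},S^{2m}_{\Q}))$ is purely by weights because the corresponding statement holds on the wedge side by Theorem \ref{thm:mapsfromwedges}, and the weights of $x_i,y_i$ are transported to $(\mu,(1))$ and $(2\mu,(2))$ respectively.

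The remaining point is $\Gamma_{g,1}$-equivariance. Here I would use that $\rho:\Gamma_{g,1}\hookrightarrow\Aut(\Z^{*2g})$ is realised geometrically: for $\phi\in\Gamma_{g,1}$, represented by a diffeomorphism of $\Sigma_{g,1}$, the composite $\vee_{2g}S^1\xrightarrow{i}\Sigma_{g,1}\xrightarrow{\phi}\Sigma_{g,1}$ is based-homotopic to $i\circ\rho(\phi)$ for a suitable self-map $\rho(\phi)$ of $\vee_{2g}S^1$ inducing $\rho(\phi)$ on $\pi_1$ — indeed any based self-map of $\vee_{2g}S^1$ is determined up to based homotopy by its effect on $\pi_1$, and one chooses the self-map realising the automorphism $\rho(\phi)$. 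Consequently the square relating the $\Gamma_{g,1}$-action on $\map_*(\Sigma_{g,1},S^{2m}_{\Q})$ (by precomposition with $\phi^{-1}$) and the $\Aut(\Z^{*2g})$-action on $\map_*(\vee_{2g}S^1,S^{2m}_{\Q})$ commutes up to homotopy, so on cohomology $i^*$ intertwines the $\Gamma_{g,1}$-action with the restriction along $\rho$ of the $\Aut(\Z^{*2g})$-action described in Theorem \ref{thm:autfnmapsfromwedgestospheres}. By the definitions at the end of Section \ref{sec:mapsfromsurfaces}, this restricted action is exactly the standard symplectic action on the $x_i$ together with the (symplectic) Johnson action on the $y_i$, which yields \eqref{eq:mapsfromsurfacestospheres}.

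The one genuine subtlety — and the step I expect to require the most care — is the claim that a diffeomorphism $\phi$ of $\Sigma_{g,1}$ fits into the homotopy-commuting square with a self-map of the wedge realising $\rho(\phi)$, \emph{compatibly with basepoints}: the deformation retraction $i$ is a based map and $\phi$ fixes a neighbourhood of $\partial\Sigma_{g,1}$ hence fixes the basepoint $p\in\partial\Sigma_{g,1}$, so $\phi\circ i$ and $i$ are both based maps $\vee_{2g}S^1\to\Sigma_{g,1}$; since $\Sigma_{g,1}$ is aspherical with $\pi_1=\Z^{*2g}$ free, based maps out of $\vee_{2g}S^1$ are classified up to based homotopy by their $\pi_1$-effect, so $\phi\circ i\simeq_* i\circ f$ where $f:\vee_{2g}S^1\to\vee_{2g}S^1$ is the (homotopically unique) based self-map inducing $\rho(\phi)$ on $\pi_1$. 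Everything else is formal: functoriality of $H^*(\map_*(-,S^{2m}_{\Q}))$ in based maps, the fact that $\phi\mapsto\rho(\phi)$ is a homomorphism so the actions match not just objectwise but as group actions, and transport of the algebra presentation along the isomorphism $i^*$.
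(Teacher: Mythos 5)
Your proposal is correct and is essentially the paper's own argument: the paper's proof of Theorem \ref{thm:mapsfromsurfaces} simply cites Theorem \ref{thm:autfnmapsfromwedgestospheres} with $n=2g$, the deformation retraction $i$, and the embedding $\rho:\Gamma_{g,1}\hookrightarrow\Aut(\Z^{*2g})$, and your write-up supplies exactly the details behind that reduction (homotopy invariance of $\map_*(-,S^{2m}_{\Q})$, $\Q^*$-equivariance of $i^*$, and the based-homotopy $\phi\circ i\simeq_* i\circ f$ via asphericity). The careful handling of basepoints and the identification of the restricted action with the symplectic/Johnson action match the paper's intended argument.
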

\begin{proof}
Immediate from Theorem \ref{thm:autfnmapsfromwedgestospheres} for $n=2g$, using the deformation retraction $i$ and the embedding of groups $\rho:\Gamma_{g,1}\to \Aut(\Z^{*2g})$.
\end{proof}

\subsection{Remarks on the Johnson representation}
For a group $G$, denote by $G^{(i)}$ its lower central series, namely $G^{(0)}=G$ and $G^{(i)}=[G,G^{(i-1)}]$ for $i\ge 1$. We thus obtain a short exact sequence of groups
\begin{equation}
    1\to G^{(1)}/G^{(2)}\to  G/G^{(2)}\to  G/G^{(1)}=G_{ab}\to 1.
    \label{eq:centralextensionofG}
\end{equation}
In the case $G=\Z^{*n}$, this sequence has a natural action of $\Aut(\Z^{*n})$. The outermost groups are $G_{ab}=\Z^n$ and $G^{(1)}/G^{(2)}\cong \Lambda^2\Z^n$, the latter via a map
\begin{align*}
    L^2:(\Z^{*n})^{(1)}=[\Z^{*n},\Z^{*n}]&\to \Lambda^2\Z^n\\
    [a,b]&\mapsto [a]\wedge[b]
\end{align*}
that has kernel precisely $(\Z^{*n})^{(1)}/(\Z^{*n})^{(2)}$
(see \cite{DarneAndreadakisIAn,LazardGroupesNilpotents54}: the Lie ring of $\Z^{*n}$ is the free Lie ring on $\Z^n$ in degree 1).
In this respect the content $c:\Z^{*n}\to \Lambda^2\Z^n$ is a non-canonical (in the sense of Remark \ref{rmk:contentnoncanonical}) extension of the map $2L^2$ to the whole of $\Z^{*n}$.

\subsection{Eilenberg-MacLane spaces and the Postnikov fibration of $S^{2m}_{\Q}$}\label{sec:oddsphereseilenbergmaclane}
\begin{definition}
    For $m,r\ge 1$, a based $\Q^*$-action on an Eilenberg MacLane space $(K(\Q,m),*)$ is called a \textit{degree $r$ action} if the induced action on $\pi_m(K(\Q,m))=\Q$ is by multiplication by $q^r$ for every $q\in \Q^*$. We write $K(\Q(r),m)$ to remember such an action.
    \label{def:poweredEMspaces}
\end{definition}
\begin{example}
A sphere $S^{2m+1}_{\Q}$, $m\ge 0$, with a sphere action is a $K(\Q(1),2m+1)$.
\end{example}

We now pick a topological model for the Postnikov fibration of $S^{2m}_{\Q}$, for $m\ge 1$, with a commuting sphere action. We use Chapter 17 from \cite{FelixYves2001RHTb} to sketch such a construction. A Sullivan model for $S^{2m}$ is $(\Lambda^*(u[2m],v[4m-1]); d(v)=u^2)$. It fits in the short exact sequence of augmented Sullivan algebras
\begin{equation}
    \small
    \begin{tikzcd}[row sep=small, column sep=small]
    (\Lambda^*(v[4m-1]), 0)\arrow[dr] & (\Lambda^*(u[2m],v[4m-1]), d(v)=u^2)\lar\dar & (\Lambda^*(u[2m]), 0)\arrow[dl]\lar\\
    & (\Q,0) & 
    \end{tikzcd}.
    \label{eq:SullivanAlgebras}            
\end{equation}
and the group $\Q^{*}$ acts on the diagram by algebra automorphisms via $q\cdot u=qu$ and $q\cdot v=q^2v$. Applying  the \textit{spacial realization} functor from \textit{dgca}s to \textit{topological spaces} translates diagram \eqref{eq:SullivanAlgebras} to fibration \eqref{eq:fibrationrationalsphere} to get a based fibration sequence
    \begin{equation}
    \begin{tikzcd}
     (K(\Q(2),4m-1),*)\rar & (S^{2m}_{\Q},*)\rar["u"] & (K(\Q(1),2m),*)
    \end{tikzcd}
    \label{eq:fibrationrationalsphere}
    \end{equation}
with a commuting sphere action in the middle and degree actions on the base and fibre.

We compute the $\Gamma_{g,1}\times \Q^*$-algebra $H^*(\map_*(\Sigma_{g,1},K(\Q(r),m)))$ for $m,r\ge 1$, and how the functor $H^*(\map_*(\Sigma_{g,1},-))$ applies to the Postnikov fibration \eqref{eq:fibrationrationalsphere}.

\begin{theorem}
For all $m\ge 2$, the action of $\Q^*$ on $H^*(\map_*(\Sigma_{g,1},K(\Q(r),m)))$ is purely by weights and there are isomorphisms of weighted graded commutative $\Gamma_{g,1}$-algebras
\begin{equation}
    H^*(\map_*(\Sigma_{g,1},K(\Q(r),m));\Q)\cong S[e_1,...,e_{2g}],
\end{equation}
with degrees/weights $|e_i|=(m-1,(r))$ and the symplectic action on the $e_i$.

Furthermore, under these isomorphisms and isomorphism \eqref{eq:mapsfromsurfacestospheres}, the functor $H^*(\map_*(\Sigma_{g,1},-))$ applied to fibration~\eqref{eq:fibrationrationalsphere} 
$$K(\Q(1),2m)\longleftarrow S^{2m}_{\Q}\longleftarrow K(\Q(2),4m-1))$$
is given by
\begin{equation*}
\Lambda[x_1,...,x_{2g}]\to
\Q[y_1,...,y_{2g}]\otimes \Lambda[x_1,...,x_{2g}] \to \Q[y_1,...,y_{2g}],
\end{equation*}
where the first map is the inclusion and the second is the projection.
\label{thm:mapstoEM}
\end{theorem}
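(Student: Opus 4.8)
The plan is to compute $H^*(\map_*(\Sigma_{g,1}, K(\Q(r),m)))$ in the same style as the earlier theorems: first do the case of a wedge of circles and then transport the answer to $\Sigma_{g,1}$ via the deformation retraction $i$. First I would use $\map_*(S^1, K(\Q,m)) = \Omega K(\Q,m) \simeq K(\Q, m-1)$, so that $\map_*(\vee_n S^1, K(\Q(r),m)) = (\Omega K(\Q(r),m))^{\times n} = K(\Q(r),m-1)^{\times n}$, and hence by the K\"unneth theorem $H^*(\map_*(\vee_n S^1, K(\Q(r),m))) \cong S[e_1,\dots,e_n]$ with $|e_i| = (m-1,(r))$. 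Here the weight claim is immediate: a based self-map of $K(\Q,m)$ inducing multiplication by $q^r$ on $\pi_m$ loops down to a self-map of $K(\Q,m-1)$ inducing multiplication by $q^r$ on $\pi_{m-1}$, hence on the single algebra generator $e$ of $H^*(K(\Q(r),m-1))$, which has weight $r$; so $\Q^*$ acts purely by weights on the tensor product. The $\Aut(\Z^{*n})$-action is computed exactly as in the proof of Theorem \ref{thm:mapsfromwedges}: for $\phi$ with $\phi(\alpha) = \alpha_{i_1}^{\varepsilon_1}\cdots\alpha_{i_l}^{\varepsilon_l}$, the induced map $\Omega\phi$ is a composition of loop-concatenations $C$ and loop-inversions $I$, but now $H^*(\Omega K(\Q,m)) = H^*(K(\Q,m-1))$ is a \emph{primitively generated} Hopf algebra ($C^*(e) = e\otimes 1 + 1\otimes e$, $I^*(e) = -e$, with no quadratic correction term since there is no analogue of $y$), so $\phi^*(e_i)$ is simply $[\phi](e_i)$, i.e.\ $\Aut(\Z^{*n})$ acts through its standard linear action on $H = \Q^n$. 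Restricting along $\rho:\Gamma_{g,1}\hookrightarrow\Aut(\Z^{*2g})$ and using the deformation retraction $i:\vee_{2g}S^1 \to \Sigma_{g,1}$ (which is a $\Gamma_{g,1}$-equivariant homotopy equivalence on mapping spaces in the sense used in Theorem \ref{thm:mapsfromsurfaces}) gives the first claim, with the symplectic action on the $e_i$.

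For the second claim I would apply the functor $H^*(\map_*(\Sigma_{g,1},-))$ to the fibration \eqref{eq:fibrationrationalsphere}. Looping once gives the fibration sequence $K(\Q(1),2m-1) \leftarrow \Omega S^{2m}_{\Q} \leftarrow K(\Q(2),4m-2)$, and since $\map_*(\Sigma_{g,1},-)$ commutes with finite products and the space $\map_*(\Sigma_{g,1},Z)$ only depends on $Z$ through the based homotopy type relevant to a $2$-complex, one reduces as before to the wedge $\vee_{2g}S^1$ and thence, by K\"unneth, to the single-circle case $\Omega(-)$. Over a single circle the fibration becomes $K(\Q(1),2m-1) \leftarrow \Omega S^{2m}_{\Q} \leftarrow K(\Q(2),4m-2)$ of loop spaces, and on cohomology this is exactly $H^*(K(\Q(1),2m-1)) \to H^*(\Omega S^{2m}_{\Q}) \to H^*(K(\Q(2),4m-2))$, i.e.\ $\Lambda[x] \to \Lambda[x]\otimes\Q[y] \to \Q[y]$ with $|x| = (\mu,(1)) = (2m-1,(1))$ and $|y| = (2\mu,(2)) = (4m-2,(2))$. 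The first map sends $x \mapsto x$ (inclusion of the subalgebra pulled back from the base of the Postnikov tower), and the second kills $x$ and sends $y \mapsto y$ (the fibre inclusion, where $\Omega S^{2m}_{\Q}$ restricted to a fibre is the loop space of the $K(\Q(2),4m-1)$-fibre). Taking the $2g$-fold tensor power and restricting to $\Gamma_{g,1}$ gives precisely the asserted diagram $\Lambda[x_1,\dots,x_{2g}] \to \Q[y_1,\dots,y_{2g}]\otimes\Lambda[x_1,\dots,x_{2g}] \to \Q[y_1,\dots,y_{2g}]$ with the inclusion and projection. One should check these maps are $\Gamma_{g,1}$-equivariant, but this is automatic since all constructions are natural in $\Sigma_{g,1}$ with based maps and hence intertwine the precomposition actions.

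The identification of the generators $x,y$ with the Postnikov maps requires a small compatibility check: one must verify that the class $u \in H^{2m}(S^{2m}_{\Q})$ pulling back from the base $K(\Q(1),2m)$ corresponds, after looping, to the generator $x$ of weight $1$ chosen in the proof of Theorem \ref{thm:mapsfromwedges} (via the Hurewicz image $t$ of the adjoint to the identity), and that the fibre class $v$ of weight $2$ loops to $y$. This follows by comparing the Sullivan model $(\Lambda^*(u[2m],v[4m-1]); d(v) = u^2)$ in \eqref{eq:SullivanAlgebras} with the bar-type computation of $H^*(\Omega S^{2m})$: looping shifts degrees down by one, $u \rightsquigarrow x$ with $|x| = 2m-1$ and $v \rightsquigarrow y$ with $|y| = 4m-2$, and the weight gradings $q\cdot u = qu$, $q\cdot v = q^2 v$ match the weights $W(x)=1$, $W(y)=2$ established earlier; the relation $d(v)=u^2$ is exactly what forces $y$ to transgress to $x^2$, matching the coproduct formula $C^*(y) = y\otimes 1 + x\otimes x + 1\otimes y$. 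I expect this bookkeeping — keeping the two independent choices of model for $S^{2m}_{\Q}$ (the Sullivan one in \eqref{eq:SullivanAlgebras} and the $\Omega S^{2m}$ one in Theorem \ref{thm:mapsfromwedges}) consistent — to be the only genuinely delicate point; everything else is a formal consequence of K\"unneth, naturality, and the structure of $H^*(\Omega K(\Q,m))$ as a free graded-commutative algebra on one primitive generator.
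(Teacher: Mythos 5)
Your proposal is correct and follows essentially the same route as the paper: run the argument of Theorem \ref{thm:mapsfromwedges} with $H^*(\Omega K(\Q,m))$ a primitively generated free algebra on one generator (so no content/correction term, giving the linear, hence symplectic, action and weight $r$), transfer to $\Sigma_{g,1}$ via the retraction, and obtain the second claim by looping the Postnikov fibration and reading off the maps $\Lambda[x]\to\Lambda[x]\otimes\Q[y]\to\Q[y]$ on loop-space cohomology. The extra bookkeeping you flag about matching the Sullivan-model generators with the loop-space generators $x,y$ is a reasonable sanity check but is not an issue the paper needs to address separately, since the weight and degree gradings already pin down the generators up to nonzero scalar.
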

\begin{proof}
For the first part, the argument is identical to what we have done for spheres: identify $H^*(\Omega K(\Z,m))=S[x[m-1,(1)]]$ with Hopf-algebra structure defined by $x\mapsto x\otimes 1+1\otimes x$; an identical argument to Proposition \ref{thm:mapsfromwedges}, shows that $H^*(\map_*(\vee_n S^1,K(\Q,m)))=S[e_1,...,e_{2g}]$ with no need to define any quantity analogous to the content; transfer it to surfaces to obtain the result. Of course that means that changing to $K(\Q(r),m)$ we must multiply all weights by $r$.  

For the second part, we observe that the spherical fibration \eqref{eq:fibrationrationalsphere} gives maps on rational cohomology $H^*(\Omega K(\Q(1),2m))\to H^*(\Omega S^{2m}_{\Q})\to H^*(\Omega K(\Q(2),4m-1))$ given by
$$\Lambda[x[2m-1,(1)]]\to \Lambda[x[2m-1,(1)]]\otimes\Q[y[4m-2,(2)]]\to \Q[y[4m-2,(2)]].$$
From this the second part follows.
\end{proof}

\subsection{Appendix: Free maps to spheres}\label{sec:freemaps}
Here we compute the weighted $\Aut(\Z^{*n})\times \Q^*$-algebra structure of $H^*(\map(\vee_nS^1,S^{2m}_{\Q}))$ for free (i.e. unbased) maps. This result is not needed for the remainder of this paper, except briefly in Section \ref{sec:closedsurfaces}, as it relates to computing the $\MCG(\Sigma_g)$-action on $H^*(C_n(\Sigma_g))$ for closed surfaces. We fix $n,m\ge 1$. Recall algebra the $\Aut(\Z^{*n})$-algebra $A_n^*=\Q[y_1,...,y_{n}]\otimes \Lambda[x_1,...,x_{n}]$

\begin{definition}
The \textit{Koszul differential} $d_K:A_n^*\to A_n^*$ is given by $d_K(x_i)=0$ and $d_K(y_i)=x_i$ for $i=1,...,n$, and extended as a derivation of degree $(-\mu, (0))$ via the Leibniz rule.
\end{definition}
\begin{proposition}
The derivation $d_K$ is an $\Aut(\Z^{*n})$-equivariant differential with homology $H_*(A_n,d_K)\cong \Q$ generated by the class of the unit $1\in K$.
\end{proposition}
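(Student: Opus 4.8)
The plan is to verify the three claims in the proposition separately: that $d_K$ is a differential (i.e. $d_K^2 = 0$), that it is $\Aut(\Z^{*n})$-equivariant, and that its homology is $\Q$ generated by $[1]$.

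For the differential property, I would compute $d_K^2$ on the generators. Since $d_K$ is a derivation of odd degree $(-\mu,(0))$ and $\mu = 2m-1$ is odd, $d_K^2$ is a derivation of even degree, and it suffices to check it vanishes on $x_i$ and $y_i$. We have $d_K^2(x_i) = d_K(0) = 0$ and $d_K^2(y_i) = d_K(x_i) = 0$, so $d_K^2 = 0$ on generators, hence everywhere.

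For equivariance, recall from Theorem \ref{thm:autfnmapsfromwedgestospheres} (equivalently Theorem \ref{thm:mapsfromwedges} and Definition \ref{def:johnsonaction}) that $\phi \in \Aut(\Z^{*n})$ acts by $J(\phi)$ with $J(\phi)(x_i) = \{[\phi](e_i)\}_x$ and $J(\phi)(y_i) = \{[\phi](e_i)\}_y + \{\xi(\phi)(e_i)\}_x$. The point is that $d_K$ collapses the $y$-to-$x$ and $x$-to-$0$ structure in a way compatible with this: applying $d_K$ to $J(\phi)(y_i)$ gives $d_K(\{[\phi](e_i)\}_y) = \{[\phi](e_i)\}_x$ (since $d_K$ takes $y_j \mapsto x_j$ linearly and kills the purely-$x$ term $\{\xi(\phi)(e_i)\}_x$), while applying $J(\phi)$ to $d_K(y_i) = x_i$ gives $\{[\phi](e_i)\}_x$; these agree. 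On $x_i$ both composites vanish. Since both $J(\phi)$ and $d_K$ are algebra derivations/homomorphisms, agreement on generators gives $d_K \circ J(\phi) = J(\phi) \circ d_K$ on all of $A_n^*$. (One should be mildly careful that $J(\phi)$ is an algebra map and $d_K$ a derivation, so the identity $d_K J(\phi) = J(\phi) d_K$ is checked by noting both sides are $\phi$-twisted derivations agreeing on generators.)

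For the homology computation, I would observe that $(A_n^*, d_K) = \bigotimes_{i=1}^n (\Lambda[x_i] \otimes \Q[y_i], d_K)$ is a tensor product of $n$ copies of the elementary Koszul complex $(\Lambda[x] \otimes \Q[y],\, d_K(y) = x)$, and by the Künneth theorem it suffices to show each factor has homology $\Q$. Each factor is, up to regrading, the standard Koszul resolution: explicitly, $\Lambda[x]\otimes\Q[y]$ has basis $\{y^k, x y^k : k \ge 0\}$, with $d_K(y^k) = k\, x y^{k-1}$ and $d_K(x y^k) = 0$; over $\Q$ the map $y^k \mapsto k x y^{k-1}$ is surjective onto the span of $\{x y^j : j \ge 0\}$ with kernel exactly $\Q\cdot 1$, so the homology is $\Q$ generated by $1$. (This is the acyclicity of the Koszul complex on one variable; alternatively one can cite that it computes $H_*$ of $\Omega$ of the relevant space, but the direct one-line computation is cleanest.) Taking the tensor product of these, the homology of $(A_n^*,d_K)$ is $\Q$ generated by the class of $1 = 1\otimes\cdots\otimes 1$.

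The only mild subtlety — and the closest thing to an obstacle — is keeping track of signs and the role of the weight grading in ``derivation of degree $(-\mu,(0))$'': since $\mu$ is odd $d_K$ anticommutes past odd-degree elements, but because $d_K^2$ is checked to vanish on generators this causes no trouble, and the weight grading plays no role in the Künneth argument beyond bookkeeping. I expect the equivariance step to require the most care, precisely because $J(\phi)$ mixes $y_i$ into $x$-monomials via $\xi$, and one must check that $d_K$ annihilates exactly those correction terms.
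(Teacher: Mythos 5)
Your proposal is correct and follows essentially the same route as the paper: the paper's (very terse) proof also reduces everything to a check on generators and then identifies $(A_n,d_K)$ with the acyclic Koszul/de Rham complex ``differential forms on an $n$-simplex,'' which is exactly the acyclicity you establish by splitting into one-variable factors and applying K\"unneth. Your explicit verification that $d_K$ annihilates the quadratic correction term $\{\xi(\phi)(e_i)\}_x$ is precisely the content hidden behind the paper's ``it is easy to check equivariance.''
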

\begin{proof}
It is easy to check equivariance and that it is a differential. The homology follows from writing the generators as $y_i$ and $d_Ky_i=x_i$, and observing that this is the homology of differential forms on an $n$-simplex.
\end{proof}

The kernel $\ker d_K$ is a weighted graded $\Aut(\Z^{*n})$-subalgebra of $A_n^{*,(*)}$ and the cokernel $\coker d_K$ inherits a weighted graded module structure over $\ker d_K$ from the multiplication in $A_n^{*,(*)}$. Therefore we can speak of the square-zero extension
\begin{equation}
    \ker d_K\ltimes \coker d_K[2m,(1)]
\end{equation}
where $\coker d_K[\mu+1,(1)]$ is $\coker d_K$ shifted up by $(\mu+1,(1))$ degrees. The square-zero extension is naturally a weighted $\Aut(\Z^{*n})$-algebra.

\begin{theorem}\label{thm:freemaps}
There is an isomorphism of weighted $\Aut(\Z^{*n})$-algebras 
\begin{equation}
    H^*(\map_*(\vee_nS^1,S^{2m}_{\Q}))\cong \ker d_K\ltimes \coker d_K[\mu+1,(1)].
\end{equation}
\end{theorem}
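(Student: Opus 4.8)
The plan is to relate free maps $\map(\vee_n S^1, S^{2m}_{\Q})$ to based maps via the evaluation fibration
$$\Omega S^{2m}_{\Q} \longrightarrow \map(\vee_n S^1, S^{2m}_{\Q}) \xrightarrow{\ \ev_*\ } S^{2m}_{\Q},$$
obtained by evaluating at the basepoint of $\vee_n S^1$ — except that here, since the free mapping space of $\vee_n S^1$ is not quite the homotopy fibre but rather $\map(\vee_n S^1, S^{2m}_{\Q}) \simeq S^{2m}_{\Q} \times \map_*(\vee_n S^1, S^{2m}_{\Q})$ up to a twist, one should instead use that $\vee_n S^1$ is obtained from a point by attaching $n$ one-cells, so $\map(\vee_n S^1, X) = \map(\mathrm{pt}, X) \times_{\map(\mathrm{pt},X)} (\text{path fibration stuff})$; more cleanly, $\map(\vee_n S^1, X) \to X$ is a fibration with fibre $\map_*(\vee_n S^1, X) = (\Omega X)^{\times n}$, and the bundle is classified by the action of $\pi_1$-type data. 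The key point is that $\ev_*$ admits a section (constant maps $X \hookrightarrow \map(\vee_n S^1, X)$), so on rational cohomology we get a splitting, but \emph{not} as algebras — there is an extension, and the claim is precisely that this extension is the square-zero extension governed by $d_K$.

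First I would set up the Serre spectral sequence (or, better, the Eilenberg–Moore spectral sequence, since $\map(\vee_n S^1, X)$ is a pullback) for the fibration $\map_*(\vee_n S^1, S^{2m}_{\Q}) \to \map(\vee_n S^1, S^{2m}_{\Q}) \to S^{2m}_{\Q}$. Rationally $S^{2m}_{\Q}$ has cohomology $\Lambda[u]/(u^2)$ with $|u| = 2m$, an exterior class, so the spectral sequence has only two nonzero columns; there is a single differential $d_{2m}$ from the fibre-column to the base-column. I would identify this $d_{2m}$: the transgression on the fibre $(\Omega S^{2m}_{\Q})^{\times n}$ sends each "$y_i$"-type generator to (a multiple of) $x_i \cdot u$ and kills the $x_i$. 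Concretely, the attaching data of the $i$-th circle picks out the $i$-th loop coordinate, and transgression in the universal fibration $\Omega S^{2m}_{\Q} \to PS^{2m}_{\Q} \to S^{2m}_{\Q}$ sends the fundamental class of $\Omega S^{2m}_{\Q}$ in degree $2m-1$ to $u$; composing with the Hopf-algebra structure recalled in the proof of Theorem~\ref{thm:mapsfromwedges} (where $y$ is the degree-$2\mu$ class whose coproduct involves $x\otimes x$) yields exactly $d_{2m}(y_i) = x_i \otimes u$ and $d_{2m}(x_i) = 0$. After tensoring off the exterior generator $u$, this differential is precisely $d_K$ up to the degree/weight shift — the weight of $u$ being $1$ under the sphere action, as computed for $\pi_{2m}(S^{2m}_{\Q})$.

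Then the $E_{2m+1} = E_\infty$ page is: in the base-free column, $\ker d_K \subset A_n^*$; in the $u$-column, $(\coker d_K) \otimes u$, which sits in degree shifted by $|u| = 2m$ and carries weight raised by $1$ — matching $\coker d_K[\mu+1,(1)]$ after noting $2m = \mu+1$ and that the sphere-action weight of $u$ is $1$. (One should double-check the claimed shift "$[2m,(1)]$" versus "$[\mu+1,(1)]$" in the statement: $\mu = 2m-1$ so $\mu+1 = 2m$, consistent, though the degree shift introduced by transgression across a class of degree $2m$ is indeed $2m$ on the nose, and the earlier displayed "$[2m,(1)]$" should read "$[\mu+1,(1)]$".) Since the $E_\infty$-page is concentrated in two adjacent filtration degrees and the extension problem is over $\Q$, the associated graded determines the ring up to the multiplicative extension; the section provided by constant maps splits it additively and forces the product of the $u$-column with itself to vanish (as $u^2 = 0$ already in $H^*(S^{2m}_{\Q})$), so the algebra is exactly the square-zero extension $\ker d_K \ltimes \coker d_K[\mu+1,(1)]$. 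Naturality of the evaluation fibration in $\vee_n S^1$ gives the $\Aut(\Z^{*n})$-equivariance, and naturality in $S^{2m}_{\Q}$ gives the $\Q^*$-equivariance (hence purity of weights, inherited from $A_n^*$ and from $u$ having pure weight $1$).

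The main obstacle I anticipate is pinning down the transgression $d_{2m}$ precisely — both the coefficient (is it $y_i \mapsto x_i u$ exactly, or up to the choice of generators, possibly a factor of $2$ as appears elsewhere via the content?) and the bookkeeping of which loop-coordinate each circle contributes, so that the differential assembles into the global derivation $d_K$ rather than something only coordinatewise correct. Getting the Hopf-algebra-compatibility of transgression right (the coproduct $C^*(y) = y\otimes 1 + x \otimes x + 1 \otimes y$ is what makes $y$, not $x$, the transgressive class that hits $u$) is the crux; once that is secured, the rest is formal. A secondary point to handle carefully is confirming there are no higher differentials and no hidden extension beyond the square-zero one, which follows from the two-column structure and the existence of the section, respectively.
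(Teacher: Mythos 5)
Your overall route is the same as the paper's: run the Serre spectral sequence of the evaluation fibration with fibre $\map_*(\vee_nS^1,S^{2m}_{\Q})$, note it has only the columns $p=0$ and $p=2m$, identify the single differential $d_{2m}$ with (a scalar multiple of) the Koszul differential by reducing circle-by-circle to $n=1$ (the paper then quotes the free loop space computation for $S^{2m}$, giving $d_{2m}(x)=0$, $d_{2m}(y)=2u\otimes x$; the factor $2$ you worry about is harmless since it changes neither $\ker d_K$ nor $\coker d_K$), and read off $E_\infty = \ker d_K \oplus u\otimes\coker d_K$ with the weight of $u$ equal to $1$. Up to this point your plan matches the actual proof.

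The genuine gap is in how you dispose of the extension problems. You assert that the constant-maps section ``splits it additively'' and, together with $u^2=0$, ``forces'' the algebra to be the square-zero extension. The section $S^{2m}_{\Q}\to\map(\vee_nS^1,S^{2m}_{\Q})$ only splits off $\ev^*H^*(S^{2m}_{\Q})=\Q\{1,u\}$; it provides neither an $\Aut(\Z^{*n})\times\Q^*$-equivariant splitting of the filtration $0\to F^{2m}H^d\to H^d\to E_\infty^{0,d}\to 0$ (extensions of $\Aut(\Z^{*n})$-representations need not split equivariantly -- the Johnson extension defining $J$ in this very paper is a non-split example, so equivariance of a vector-space splitting cannot be waved through), nor a multiplicative splitting $\ker d_K\to H^*$ of the edge map. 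The latter is what you would need to exclude a hidden multiplicative extension: a priori the cup product of two filtration-$0$ classes whose product in $\ker d_K$ vanishes could be a nonzero element of the ideal $F^{2m}H^*$, and ``the ideal squares to zero plus an additive splitting'' does not rule this out. What does rule it out is degree/weight bookkeeping, for which you already have all the ingredients: for $m>1$ the fibre column is concentrated in total degrees divisible by $\mu$ and the $u$-column in degrees congruent to $1$ mod $\mu$, so each total degree contains at most one nonzero $E_\infty$-entry and there is nothing to extend, additively or multiplicatively; for $m=1$ (a case your argument does not treat separately) both entries can occur in the same total degree, but they carry different $\Q^*$-weights ($d$ versus $d-1$ in total degree $d$), so the weight decomposition gives a canonical, hence equivariant and cup-product-compatible, splitting. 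Replacing your appeal to the section by this argument closes the gap and recovers the paper's proof.
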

\begin{proof}
The Serre spectral sequence associated with the evaluation fibration
\begin{equation}
\begin{tikzcd}
 \map_*(\vee_nS^1,S^{2m}_{\Q})\rar&\map(\vee_nS^1,S^{2m}_{\Q})\rar["\ev"]& S^{2m}_{\Q},
\end{tikzcd}
\label{eq:fibrationunbasedmaps}
\end{equation}
where $\ev$ evaluates maps at the wedge point, has the form
\begin{equation}
    E_2^{p,q}=H^p(S^{2m}_{\Q})\otimes H^q(\map_*(\vee_nS^1,S^{2m}_{\Q}))\implies H^{p+q}(\map(\vee_nS^1,S^{2m}_{\Q})).
    \label{eq:SSSunbasedmaps}
\end{equation}
Its only two non-trivial columns are $p=0$ and $2m=\mu+1$, and so the only possibly non-trivial differentials are
$$d_{2m}:\Q\{1\}\otimes S^q(H[\mu]\oplus H[2\mu])\to \Q\{u\}\otimes S^{q-\mu}(H[\mu]\oplus H[2\mu])$$
where $u\in H^{2m}(S^{2m}_{\Q})$ is the orientation class.

We claim that $d_{2m}$ is given by $d_{2m}(1\otimes z)=2u\otimes d_K(z)$. To prove this, since $d_{2m}$ is also a derivation, it suffices to check on $z=x_i,y_i$ for $i=1,...,n$. By using the retractions of $\vee_nS^1$ to its wedge circles, we need only check for $n=1$. But in that case the fibration is precisely the free loopspace fibration of $S^{2m}_{\Q}$. This is studied in \cite{Seeliger2011freeloopspace} for $S^{2m}$ and indeed has $d_{2m}(1\otimes x)=0$ and $d_{2m}(u\otimes y)=2u\otimes x$. It immediately follows that the bigraded $\Aut(\Z^{*n})$-algebra $E^{*,*}_{\infty}$ is given by $$1\otimes \ker d_K\ltimes u\otimes\coker d_K$$
where $(\ker d_K)^i$ and $(\coker d_K)^i$ are in degrees $(i,0)$, and $|u|=(0,2m)$. 

Finally, the $\Q^*$-action on the $E_2$-page is by weight $k$ on $H^{k\mu}(\map_*(\vee_nS^1,S^{2m}_{\Q}))$ and by weight $1$ on $u$. So it is by weight $k$ on $E_{\infty}^{0,k\mu}$ and weight $k+1$ on $E_{\infty}^{2m,kq'}$, and these are the only non-trivial entries of $E_{\infty}$. If $m>1$, then on every diagonal $p+q=d$ there is at most one non-trivial entry, and we conclude that $H^d(\map(\vee_nS^1,S^{2m}_{\Q}))$ is equal to that entry $\Aut(\Z^{*n})\times\Q^*$-equivariantly. We thus conclude that cup-product, the $\Aut(\Z^{*n})$-action and the weights on $H^{*}(\map(\vee_nS^1,S^{2m}_{\Q}))$ are as desired. In the case $m=1$, we have $\mu=1$ and $H^{k+2}(\map(\vee_nS^1,S^{2m}_{\Q}))$ is an extension of $E_{\infty}^{0,k+2}$ by $E_{\infty}^{2,k}$ which a priori could be non-trivial. However, the two have different weights, namely $k+2$ and $k+1$ and the weightspaces thus provide an $\Aut(\Z^{*n})\times\Q^*$-equivariant splitting that is compatible with the cup-product. We conclude again the desired weighted $\Aut(\Z^{*n})$-algebra structure.
\end{proof}

\subsection{Relations with Higher Hochschild homology}
The $\Aut(\Z^{*n})$ representations in Theorems \ref{thm:autfnmapsfromwedgestospheres} and \ref{thm:freemaps} seem identical to the ones studied in \cite{turchin2016hochschildpirashvili,vespapowell2019higher}. The missing link is the relation of Higher Hochschild homology with homology of mapping spaces established in \cite{PirashviliHigherHochschild}. The benefit of working with cohomology of spaces is the algebra structure provided by the cup product. This allows us to determine the $\Aut(\Z^{*n})$-action on the algebra by describing it only on a generating set.

\section{Maps with fixed boundary behaviour}\label{sec:mapsrelboundary}
In this section, we compute the $\Gamma_{g,1}\times\Q^*$-algebra  $H^*(\mapd(\Sigma_{g,1},S^{2m}_{\Q}))$, for $g\ge 0$ and $m\ge 2$. Again, we set $\mu=2m-1$.

\begin{definition}\label{def:bigradedR}
    For $m\ge 1$, let $$R^{*,*,(*)}=\Q[y_1,...,y_{2g},w]\otimes \Lambda[x_1,...,x_{2g},v]$$ 
    be the weighted bigraded differential $\Gamma_{g,1}$-algebra with  
    \begin{itemize}
    \item bidegrees/weights $|x_i|=(\mu,0,(1))$, $|y_i|=(2\mu,0,(2))$,
    $|w|=(0,\mu-1,(1))$ and $|v|=(0,2\mu-1,(2))$,
    
    \item differential $d_R$ given by 
    \begin{equation*}
    d_R(v)=2\omega:=2(x_1x_2+...+x_{2g-1}x_{2g}),
    \end{equation*}
    vanishing on the $x_i$, $y_i$ and $w$, and extended using the Leibniz rule, and  
    
    \item the $\Gamma_{g,1}$ action trivial on $w$ and $v$, symplectic on the $x_i$ and the Johnson on the $y_i$.
    \end{itemize}
\end{definition} 

\begin{definition}[Totalisation]
For a bigraded algebra $B^{*,*}$, its \textit{totalisation} $\Tot(B)^*$ is the graded algebra $\Tot(B)^k=\oplus_{p+q=k}B^{p,q}$, with product inherited from $B^{*,*}$.
\end{definition}
\begin{remark}
If $B^{*,*}$ is weighted, then $\Tot(B)^{*}$ is also weighted. If $(B^{*,*},d_B)$ is a differential algebra, then $\Tot(B)^*$ inherits a differential $d_B$ and $H^*(\Tot(B),d_B)=\Tot(H(B,d_B))^*$. 
\end{remark}

\begin{definition}
For $m\ge 1$, define the weighted graded differential $\Gamma_{g,1}$-algebra
\begin{equation}
    R^{*,(*)}_m:=\Tot(R^{*,*,(*)}_m).
\end{equation} 
We call $R_m^{*,*}$ the \textit{bigraded version} of $R_m^{*}$.
\label{def:algebraR}
\end{definition}

\begin{theorem}\label{thm:maintheoremwithoutmap}
Let $g\ge 0$ and $m\ge 2$.
The sphere action on $H^*(\map_{\partial}(\Sigma_{g,1},S^{2m}_{\Q}))$ is purely by weights and there is a weighted $\Gamma_{g,1}$-equivariant isomorphism 
\begin{equation}
    \begin{tikzcd}
          H^*(\map_{\partial}(\Sigma_{g,1},S^{2m+2}_{\Q}))\cong H^*(R_m,d_R).
    \end{tikzcd}
    \label{eq:maintheoremwithoutmap}
\end{equation}
\end{theorem}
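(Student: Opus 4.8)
The plan is to analyze the cofibre sequence $\Sigma_{g,1}/\partial\Sigma_{g,1} \simeq S^2 \to \Sigma_g \to$ or rather, more usefully, the cofibre sequence relating based maps and maps rel boundary. Concretely, collapsing the boundary $\partial\Sigma_{g,1}$ to a point gives a cofibre sequence $S^1 \simeq \partial\Sigma_{g,1} \to \Sigma_{g,1} \to \Sigma_{g,1}/\partial\Sigma_{g,1}$, and since $\Sigma_{g,1}$ is homotopy equivalent to $\vee_{2g}S^1$ with the boundary circle attached via the word $\zeta = [\alpha_1,\alpha_2]\cdots[\alpha_{2g-1},\alpha_{2g}]$, one gets a based homotopy cofibre sequence $S^1 \xrightarrow{\zeta} \vee_{2g}S^1 \to \Sigma_{g,1} \to S^2 \to \cdots$. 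Mapping into $S^{2m}_{\Q}$ turns this into a fibre sequence
\begin{equation*}
\map_{\partial}(\Sigma_{g,1},S^{2m}_{\Q}) \to \map_*(\Sigma_{g,1},S^{2m}_{\Q}) \xrightarrow{\zeta^*} \map_*(S^1,S^{2m}_{\Q}) = \Omega S^{2m}_{\Q}.
\end{equation*}
The key input is that by Theorem \ref{thm:mapsfromsurfaces} the middle term has cohomology $\Q[y_i]\otimes\Lambda[x_i]$, the base $\Omega S^{2m}_{\Q}$ has cohomology $\Lambda[x]\otimes\Q[y]$ (from the proof of Theorem \ref{thm:mapsfromwedges}), and by Corollary \ref{cor:Jphiisafunctoroverfreegroups} together with Example \ref{ex:boundarywordcontent} the map $\zeta^*$ in cohomology is computed by $J(\zeta)$: it sends $x \mapsto \sum [\zeta](e_i)\otimes\{\cdot\}_x = 0$ (since $[\zeta]=0$ in $H_1$) and $y \mapsto \xi(\zeta) = c(\zeta)\text{-term} = 2\omega = 2(x_1x_2+\cdots+x_{2g-1}x_{2g})$. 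Thus the relevant fibration has base with cohomology $\Lambda[x]\otimes\Q[y]$ mapping to $\Q[y_i]\otimes\Lambda[x_i]$ by $x\mapsto 0$, $y\mapsto 2\omega$.

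Next I would run the Eilenberg–Moore spectral sequence (or equivalently, since $\Omega S^{2m}_{\Q}$ is a loop space and the relevant spaces are nilpotent, use the cobar construction / a Sullivan-model computation) for this fibration. The $E_2$-page is $\Tor_{H^*(\Omega S^{2m}_{\Q})}(\Q, H^*(\map_*(\Sigma_{g,1},S^{2m}_{\Q})))$. Since $H^*(\Omega S^{2m}_{\Q}) = \Lambda[x]\otimes\Q[y]$ acts on $\Q[y_i]\otimes\Lambda[x_i]$ through $x\mapsto 0$, $y\mapsto 2\omega$, this Tor is computed by a Koszul-type resolution: tensoring up by $\Lambda[x]$ over itself contributes a generator $w$ in the appropriate bidegree killing $x$, and resolving over $\Q[y]$ with $y\mapsto 2\omega$ contributes a polynomial generator $v$ in bidegree dual to $y$ with the differential $d_R(v) = 2\omega$. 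This is exactly the algebra $R_m^{*,*,(*)}$ of Definition \ref{def:bigradedR}, and after totalising we get $H^*(R_m,d_R)$; the weights and bidegrees are forced by tracking the $\Q^*$-action (weight $1$ on $x$, hence on $w$; weight $2$ on $y$, hence on $v$) and the internal versus filtration degree. One must check the spectral sequence degenerates, i.e. that there are no further differentials and no extension problems; this follows because $m\ge 2$ makes the algebra suitably sparse (the generators $x_i$ in degree $\mu=2m-1$, $y_i$ in degree $2\mu$, $w$ in degree $\mu-1$, $v$ in degree $2\mu-1$ are spread out enough), just as in the appendix argument for Theorem \ref{thm:freemaps}, and the purity of the $\Q^*$-weights resolves whatever extensions could in principle arise, exactly as in the last paragraph of that proof.

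The $\Gamma_{g,1}$-equivariance is threaded through the whole argument: the fibre sequence is natural in $\Sigma_{g,1}$ with respect to boundary-fixing diffeomorphisms (which fix $\zeta$, by the description $\im\rho = \{\phi : \phi(\zeta)=\zeta\}$ from Section \ref{sec:mapsfromsurfaces}), so $\Gamma_{g,1}$ acts on the entire fibration and on its Eilenberg–Moore spectral sequence, and the identifications of $E_2$ are $\Gamma_{g,1}$-equivariant because they come from the $\Gamma_{g,1}$-equivariant isomorphisms of Theorems \ref{thm:mapsfromsurfaces} and \ref{thm:mapsfromwedges}; the new generators $w$ and $v$, arising from resolving over the cohomology of the base $\Omega S^{2m}_{\Q}$ on which $\Gamma_{g,1}$ acts trivially (it acts trivially on $\map_*(S^1,-)$ after the boundary circle is collapsed, since $\zeta$ is fixed), carry the trivial $\Gamma_{g,1}$-action, matching Definition \ref{def:bigradedR}. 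The main obstacle I expect is the rigorous identification of the Eilenberg–Moore $E_2$-page as precisely $R_m^{*,*}$ together with the verification that the spectral-sequence differential is exactly $d_R$ and that it degenerates at the next page with no hidden extensions — in other words, pinning down that the Tor-algebra, with its internal differential, really is the stated (bigraded, weighted) model rather than merely abstractly isomorphic to it as a vector space. This is where I would be most careful to reduce, via the retractions to wedge summands and the $n=1$ case (the free loop space fibration of $S^{2m}_{\Q}$, cf.\ \cite{Seeliger2011freeloopspace}), to a computation small enough to carry out by hand, and then invoke multiplicativity and $\Gamma_{g,1}\times\Q^*$-equivariance to globalise.
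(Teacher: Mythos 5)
Your proposal is correct in outline but takes a genuinely different route from the paper. You run the Eilenberg--Moore spectral sequence of the restriction fibration $\mapd(\Sigma_{g,1},S^{2m}_{\Q})\to\map_*(\Sigma_{g,1},S^{2m}_{\Q})\to\Omega S^{2m}_{\Q}$, feeding in the same inputs the paper uses (Theorem \ref{thm:mapsfromsurfaces}, and $\zeta^*(x)=0$, $\zeta^*(y)=2\omega$ from Theorem \ref{thm:mapsfromwedges} and Example \ref{ex:boundarywordcontent}) to identify $E_2=\mathrm{Tor}_{\Lambda[x]\otimes\Q[y]}\bigl(\Q,\;\Q[y_1,\dots,y_{2g}]\otimes\Lambda[x_1,\dots,x_{2g}]\bigr)$, computed by the Koszul complex which is exactly $(R_m,d_R)$ (note the $E_2$-page is already $H(R_m,d_R)$, not $R_m$ itself, and $v$ is an exterior, not polynomial, generator; also the cofibre of $\zeta$ is $\Sigma_{g,1}/\partial\Sigma_{g,1}$, not $\Sigma_{g,1}$). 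The paper instead runs two Serre spectral sequences (Lemmas \ref{lem:sssofr} and \ref{lem:sssofchir}) and resolves the extension and equivariance problems via the Lifting Lemma \ref{lem:liftinglemma} applied to the bottom-and-closed set $S_E$ (Lemma \ref{lem:S_Eisbottom}), building an explicit lift $\sigma=\sigma_P\otimes\sigma_E$ and checking $\Gamma_{g,1}$-equivariance on generators. What your filtration buys is that weight pins down the Eilenberg--Moore filtration degree inside each total degree: a weight-$k$ monomial in filtration $-j$ has total degree $k\mu-j$. Hence the weight decomposition splits the filtration canonically, multiplicatively and $\Gamma_{g,1}$-equivariantly, replacing the two-fibration/lifting-lemma machinery, which the paper needs precisely because the Serre filtration of $r$ is \emph{not} separated by weights (e.g.\ $x_1v$ and $y_1w$ share degree $3\mu-1$ and weight $3$ but lie in different Serre filtrations).

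One concrete caveat: your degeneration argument ``by sparseness for $m\ge 2$'' is not sufficient as stated. Degree and filtration bookkeeping alone does not exclude, say, a $d_{2m}$ from filtration $-2m$ to filtration $0$: the class $w^{2m}$ has total degree $2m(\mu-1)=\mu^2-1$, and filtration-$0$ classes of degree $\mu^2$ (a multiple of $\mu$) exist for $g$ large. Degeneration does hold, but you should prove it either by the weight relation above (a differential preserves weight, raises total degree by $1$ and filtration by $r\ge2$, forcing $r=1$, a contradiction), or by multiplicativity: $E_2$ is generated by classes in filtrations $0$ and $-1$ (the $x_i,y_i$, $w$, and the classes $zv$ with $z\in\ker(\omega\wedge-)$), which are permanent cycles for lack of room. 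You should also record the convergence hypothesis (the base $\Omega S^{2m}_{\Q}$ is simply connected since $m\ge2$), and deduce purity of weights on $H^*(\mapd(\Sigma_{g,1},S^{2m}_{\Q}))$ itself from the fact that an operator preserving a finite filtration and acting with pairwise distinct eigenvalues on the graded pieces is diagonalizable. With these points supplied, your argument goes through and yields the theorem, including the algebra structure and $\Gamma_{g,1}$-equivariance.
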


\begin{remark}\label{lem:allalgebrasSarethesame}
The algebras $R^{*,(*)}_m$ for different $m\ge 1$ are equal as weighted ungraded differential $\Gamma_{g,1}$-algebras. They only differ in the grading, with $R^{i,(n)}_1\cong R^{i+(\mu-1)n,(n)}_m$. These relations on degrees/weights can be checked on the generators and extended to the whole algebra. The same statements thus hold for the homologies $H^{*,(*)}(R_m,d_R)$.
\end{remark}

\begin{remark}\label{rmk:alternativedescriptionofR}
An alternative description of the algebra $R_m^{*,(*)}$ is \begin{equation}
    R_m^{*,(*)}=S\Big(J[2\mu,(2)]\oplus H[\mu,(1)]\oplus \Q[\mu-1,(1)]\oplus \Q[2\mu-1,(2)]\Big)/I
\end{equation}
where
\begin{equation}
    I=\Big(z-i(z):z\in \Lambda^2\big(H[\mu,0,(1)]\big)\Big)
\end{equation}
and the inclusion $i:\Lambda^2\big(H[\mu,0,(1)]\big)\to J[2\mu,0,(2)]$ from \eqref{eq:definingextensionofJ}. We will not use this description in this section.
\end{remark}

For the rest of the section, we fix $m\ge 2$ and drop the subscript from $R^{*,(*)}_m$. 

\subsection{The topological input}\label{sec:thefibrations}
The topological input for the proof of Theorem \ref{thm:maintheoremwithoutmap} comes from the following $\Q^*$-equivariant commutative diagram.
\begin{equation}
    \begin{tikzcd}[column sep=tiny]
        \Omega^2S^{2m}_{\Q}\rar["j"]\dar[equal]&
        F_{\chi\circ r}\rar["\bar{r}"]\dar[hook,"i_{\chi\circ r}"]
        \arrow[dr, phantom, "\scalebox{1.5}{$\lrcorner$}" , very near start]
        & \map_*\big(\Sigma_{g,1}',K(\Q(2),4m-1)\big)
        \dar[hook,"i_{\chi}"]\\
        \Omega^2S^{2m}_{\Q}\rar&
        \map_{\partial}\big(\Sigma_{g,1},S^{2m}_{\Q}\big)\rar["r"]\dar["\chi\circ r"]& \map_*\big(\Sigma_{g,1}',S^{2m}_{\Q}\big)\dar["\chi"]\\
        &\map_*\big(\Sigma_{g,1}',K(\Q(1),2m)\big)\rar[equal] &
        \map_*\big(\Sigma_{g,1}',K(\Q(1),2m)\big)
    \end{tikzcd}
    \label{eq:bigdiagram}
\end{equation}
We will now define the spaces and maps appearing here, explain why the right two columns and top two rows are fibration sequences, and compute the Serre spectral sequences of the two fibrations with total space  $\map_{\partial}\big(\Sigma_{g,1},S^{2m}_{\Q}\big)$.

\subsubsection{Middle row: fibration $r$} We fix a smooth self-embedding $\rho:\Sigma_{g,1}\to \Sigma_{g,1}$ isotopic to the identity and supported in a collar neighbourhood of the boundary $\partial\Sigma_{g,1}$ that fixes the basepoint $p\in \partial\Sigma_{g,1}$ and pushes the rest of the boundary slightly inwards. Write $\Sigma_{g,1}':=\im \rho\subset \Sigma_{g,1}$. The map $r$ is defined as the restriction $\map_{\partial}(\Sigma_{g,1},S^{2m})\to \map_*(\Sigma_{g,1}',S^{2m})$. It is a fibration by virtue of being a restriction. The complement $\Sigma_{g,1}-\mathring{\Sigma}_{g,1}'$ is a closed disc
$D^2$ with two points on the boundary identified, so the fibre of $r$ is $\map_\partial(D^2,S^{2m}_{\Q})=\Omega^2S^{2m}_{\Q}$. We have the fibration sequence 
\begin{equation}
\begin{tikzcd}
    \Omega^2S^{2m}_{\Q}\rar& \map_{\partial}(\Sigma_{g,1},S^{2m}_{\Q})\rar["r"]& \map_*(\Sigma_{g,1}',S^{2m}_{\Q}).
    \end{tikzcd}
    \label{eq:fibrationR}
\end{equation}
appearing in the middle row of diagram \eqref{eq:bigdiagram}. 

\subsubsection{Right column: fibration $\chi$}
We recall Postnikov fibration \eqref{eq:fibrationrationalsphere} of $S^{2m}_{\Q}$ and apply on it the fibration preserving functor $\map_*(\Sigma_{g,1}',-)$ to obtain the fibration sequence
\begin{equation}
    \begin{tikzcd}[column sep=small]
    \map_*(\Sigma_{g,1}',K(\Q(2),4m-1))\rar["i_{\chi}"]& \map_{\partial}(\Sigma_{g,1}',S^{2m}_{\Q})\rar["\chi"]& \map_*(\Sigma_{g,1}',K(\Q(1),2m)).
    \end{tikzcd}
    \label{eq:fibrationchi}
\end{equation}
appearing in the right column of diagram \eqref{eq:bigdiagram}.

\subsubsection{The middle column: fibration $\chi\circ r$}
Composing two fibrations produces another fibration, thus we obtain the fibration sequence
\begin{equation}
    \begin{tikzcd}
    F_{\chi\circ r}\rar["i_{\chi\circ r}"]&
    \map_{\partial}(\Sigma_{g,1},S^{2m}_{\Q})\rar["\chi\circ r"]& \map_*(\Sigma_{g,1}',K(\Q(1),2m)).
    \end{tikzcd}
    \label{eq:fibrationchiR}
\end{equation}
appearing in the middle column of diagram \eqref{eq:bigdiagram}. 
By virtue of both maps $\chi$ and $r$ being fibrations, the fibre $F_{\chi\circ r}$ is the topological pullback in the top-right square of diagram \eqref{eq:bigdiagram}
\begin{equation}
    \begin{tikzcd}[column sep=tiny]
        F_{\chi\circ r}\rar["\bar{r}"]\dar[hook,"i_{\chi\circ r}"]
        \arrow[dr, phantom, "\scalebox{1.5}{$\lrcorner$}" , very near start]
        & \map_*\big(\Sigma_{g,1}',K(\Q(2),4m-1)\big)
        \dar[hook,"i_{\chi}"]\\
        \map_{\partial}\big(\Sigma_{g,1},S^{2m}_{\Q}\big)\rar["r"]& \map_*\big(\Sigma_{g,1}',S^{2m}_{\Q}\big)
    \end{tikzcd}
    \label{eq:pullbackdiagram}
\end{equation}
Consequently, the map $\bar{r}$ is the pullback of fibration $r$ under $i_{\chi}$, so it is a fibration as well, and we write $j:\Omega^2S^{2m}_{\Q}\to F_{\chi\circ r}$ for the inclusion of its fibre.

\subsubsection{The $\Gamma_{g,1}$ action}
The map $r$ and all other maps of diagram \eqref{eq:bigdiagram} are $\Diffd(\Sigma_{g,1}')$-equivariant. Since the inclusion $\rho:\Sigma_{g,1}\to \Sigma_{g,1}$ is an isotopy equivalence, the mapping class group $\Gamma_{g,1}$ will henceforth be viewed as $\MCG(\Sigma_{g,1}')$ instead of $\MCG(\Sigma_{g,1})$ without affecting the validity of our computations.

\subsection{The Serre spectral sequences}
We analyse the Serre spectral sequences of all fibrations appearing in diagram \eqref{eq:bigdiagram}. All properties of the Serre spectral sequence are taken from \cite{HatcherSSS}. For a fibration $f$ denote by $E^{p,q}_{r}(f)$ the entries of its Serre spectral sequence, with $p$ called the \textit{column}, $q$ the \textit{row} and $r$ the \textit{page}. 
Due to the commuting actions of $\Gamma_{g,1}$ and $\Q^*$, all $E^{*,*}_r(f)$ will be weighted bigraded $\Gamma_{g,1}$-algebras, with weight preserving $\Gamma_{g,1}$-equivariant differentials.
Observe that the weighted cohomology $\Gamma_{g,1}$-algebras of all of the spaces in the bottom row and right column of diagram \eqref{eq:bigdiagram} are known from Section \ref{sec:mapsfromwedges}. All base spaces of these fibrations are manifestly simply connected, as $m\ge 2$, so we need not worry about local coefficients. 

\begin{lemma}
    The Serre spectral sequence of fibration $r$ has only non-trivial differential $d_{4m-3}$ and there is an isomorphism  
    \begin{equation}
        (E_2^{*,*}(r),d_{4m-3})\cong (R^{*,*},d_R).
    \end{equation}
    of weighted differential $\Gamma_{g,1}$ algebras.
    \label{lem:sssofr}
\end{lemma}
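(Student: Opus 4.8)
The plan is to compute the Serre spectral sequence of the fibration $r$ from \eqref{eq:fibrationR}, namely $\Omega^2 S^{2m}_{\Q}\to \map_\partial(\Sigma_{g,1},S^{2m}_{\Q})\xrightarrow{r}\map_*(\Sigma_{g,1}',S^{2m}_{\Q})$, and identify its $E_2$-page together with the first differential with $(R^{*,*},d_R)$. First I would record the input cohomologies: by Theorem \ref{thm:mapsfromsurfaces} (applied via the isotopy equivalence $\rho$, using $\Sigma_{g,1}'$ in place of $\Sigma_{g,1}$), the base has $H^*(\map_*(\Sigma_{g,1}',S^{2m}_{\Q}))\cong \Q[y_1,\dots,y_{2g}]\otimes\Lambda[x_1,\dots,x_{2g}]$ as a weighted graded $\Gamma_{g,1}$-algebra, with the symplectic action on the $x_i$ and the Johnson action on the $y_i$; and the fibre has $H^*(\Omega^2 S^{2m}_{\Q})$, which rationally (since $S^{2m}_{\Q}$ has the rational homotopy type of an $H$-space in degrees $2m,4m-1$, or directly from the Sullivan model $(\Lambda(u[2m],v[4m-1]),d v=u^2)$) is the free graded-commutative algebra on a class in degree $\mu-1=2m-2$ and a class in degree $2\mu-1=4m-3$; I will name these generators $w$ (weight $1$) and $v$ (weight $2$), matching the weights coming from the sphere action on $\pi_*(\Omega^2 S^{2m}_{\Q})$. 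Since the base is simply connected ($m\ge 2$) and everything is a $\Gamma_{g,1}\times\Q^*$-algebra, the $E_2$-page is the weighted bigraded $\Gamma_{g,1}$-algebra $H^*(\text{base})\otimes H^*(\text{fibre})=\Q[y_i,w]\otimes\Lambda[x_i,v]$, with $\Gamma_{g,1}$ acting trivially on $w,v$ (these come from the fibre $\Omega^2 S^{2m}_{\Q}$, on which $\Diffd$ acts trivially because the relevant disc complement is fixed), which is exactly $R^{*,*}$ as a bigraded weighted $\Gamma_{g,1}$-algebra.

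Next I would pin down the differentials. The generators $x_i,y_i$ of the base sit in column degree and are permanent cycles; the fibre generators $w$ (row degree $\mu-1$) and $v$ (row degree $2\mu-1$) are the only possible sources. For degree reasons $w$ has even-ish total behaviour but its only possible differential $d_{\mu}(w)$ would land in $H^{\mu,0}(\text{base})=\Lambda^1[x_i]\oplus 0$; I would argue this vanishes — most cleanly by a weight/degree comparison, or by noting that $\map_\partial(\Sigma_{g,1},S^{2m}_{\Q})\to \map_*(\Sigma_{g,1},S^{2m}_{\Q})$ already shows the class $w$ survives (it is detected on the fibre $\Omega^2$ of the \emph{based} evaluation and the surface has nonzero $H^1$), or by invoking that the bottom class of $\Omega^2 S^{2m}_{\Q}$ is hit by no differential because $S^{2m}_{\Q}$ is $(2m-1)$-connected so the $\Omega^2$ is $(2m-3)$-connected and the first potentially nontrivial differential out of it is governed by the attaching of the $(4m-1)$-cell, i.e.\ by $v$. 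The essential point is that the class $v\in H^{2\mu-1}$ of the fibre, which is the transgression of $u^2$ in the Sullivan model with $dv=u^2$, transgresses to the square of the fundamental class: I would compute $d_{4m-3}(v)=2\omega=2(x_1x_2+\dots+x_{2g-1}x_{2g})$ by comparing with the evaluation/restriction fibration over $S^{2m}_{\Q}$ or over a disc, exactly as in the $n=1$ analysis of Theorem \ref{thm:freemaps}; the coefficient $2$ and the symplectic form $\omega$ arise because the relevant class restricted to $\map_\partial(D^2,S^{2m}_{\Q})=\Omega^2 S^{2m}_{\Q}$ transgresses along the boundary word $\zeta$ of $\Sigma_{g,1}$, whose content is $c(\zeta)=2\omega$ by Example \ref{ex:boundarywordcontent}. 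This is where the algebra $(R^{*,*},d_R)$ with $d_R(v)=2\omega$ literally comes from, and it is the main obstacle: identifying the transgression $d_{4m-3}(v)$ precisely, with the correct factor of $2$ and correct relation to $\omega$, and checking $\Gamma_{g,1}$-equivariance of that identification.

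Once $d_{4m-3}$ is identified on generators, it extends as a derivation, so $(E_{4m-3}^{*,*}(r),d_{4m-3})=(R^{*,*},d_R)$ as weighted differential $\Gamma_{g,1}$-algebras; and since all generators of $R^{*,*}$ are either permanent cycles or hit by $d_{4m-3}$, there are no further differentials, which completes the statement. The $\Gamma_{g,1}$-equivariance of the whole spectral sequence is automatic from the naturality of the Serre spectral sequence under the $\Diffd(\Sigma_{g,1}')$-action, and the weight grading is preserved since the sphere action is by self-homeomorphisms commuting with everything; the only real content, as noted, is the computation of the single nontrivial differential, for which I would either quote and adapt the free-loopspace computation of \cite{Seeliger2011freeloopspace} used in the proof of Theorem \ref{thm:freemaps} together with the content computation of Example \ref{ex:boundarywordcontent}, or argue directly from the Sullivan model $(\Lambda(u,v),dv=u^2)$ of $S^{2m}_{\Q}$ by a relative Sullivan model for $\map_\partial(\Sigma_{g,1},-)$.
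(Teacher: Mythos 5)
Your proposal follows essentially the same route as the paper: the paper likewise identifies $E_2^{*,*}(r)\cong R^{*,*}$ from Theorem \ref{thm:mapsfromsurfaces} together with the trivial $\Gamma_{g,1}$-action on $H^*(\Omega^2S^{2m}_{\Q})=S[w,v]$, and computes the differentials by naturality, exhibiting $r$ as the pullback of the path-type fibration $\map_*(D^2,S^{2m}_{\Q})\to \Omega S^{2m}_{\Q}$ along the boundary-restriction map $b$, so that the transgressions of $w,v$ are $b^*(x)=0$ and $b^*(y)=2\omega$ by the functorial computation of Theorem \ref{thm:mapsfromwedges} applied to $1\mapsto \zeta$ and Example \ref{ex:boundarywordcontent} --- precisely your ``transgresses along the boundary word $\zeta$ with $c(\zeta)=2\omega$'' mechanism. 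One small caveat: your first suggested argument that $d_{\mu}(w)=0$ ``by weight/degree comparison'' does not work, since the candidate targets $x_i\in E^{\mu,0}$ have the same weight $(1)$ and the correct degree; but the same boundary comparison you use for $v$ gives $d_\mu(w)=b^*(x)=\{[\zeta]\}_x=0$ because $\zeta$ is a product of commutators (alternatively, $\Gamma_{g,1}$-equivariance forces $d_\mu(w)$ to be an $\sptg$-invariant vector of $H$, hence zero), so the gap is cosmetic rather than structural.
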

\begin{proof}
This is a $\Gamma_{g,1}$-equivariant improvement of Theorem A in \cite{bodigheimer88rationalcohomologyofsurfaces}. The fibration $r$ is the pullback of the path-fibration of $\Omega S^{2m}_{\Q}$ under the restriction map $b:\map_*(\Sigma_{g,1}',S^{2m})\to \map_*(\partial\Sigma_{g,1}',S^{2m})=\Omega S^{2m}$. Observe $\Omega^2 S^{2m}_{\Q}=\map_{\partial}(D^2,S^{2m}_{\Q})$.
\begin{equation}
    \begin{tikzcd}
    \Omega^2S^{2m}_{\Q}\rar\dar[equal] & \map_{\partial}(\Sigma_{g,1},S^{2m}_{\Q})
    \rar["r"]\dar["\bar{b}"]
    \arrow[dr, phantom, "\scalebox{1.5}{$\lrcorner$}" , very near start] & 
    \map_*(\Sigma_{g,1}',S^{2m}_{\Q})\dar["b"] \\
    \Omega^2 S^{2m}_{\Q}\rar& \map_*(D^2,S^{2m}_{\Q})\rar["r_{\partial D^2}"] & 
    \Omega S^{2m}_{\Q}
    \end{tikzcd}
    \label{eq:pathspaceomegasphere}
\end{equation}

The spectral sequence of fibration $r_{\partial D^2}$ has
\begin{equation}
    E_2^{*,*}(r_{\partial D^2})=S\Big[w[0,\mu-1,(1)], v[0,2\mu-1,(2)], x[\mu,0,(1)], y[2\mu,0,(2)]\Big]
    \label{eq:SSSomega2spheres}
\end{equation}
and only non-trivial differentials $d_{\mu}(w)=x$ and $d_{2\mu}(v)=y$.
Therefore the fibre of $r$ has weighted cohomology algebra $H^*(\Omega^2S^{2m}_{\Q})=S\big[w[0,\mu-1,(1)], v[0,2\mu-1,(2)]\big]$ with trivial $\Gamma_{g,1}$ action, as $\Diffd(\Sigma_{g,1}')$ acts trivially on the fibre.
Using the weighted $\Gamma_{g,1}$-algebra structure of $H^*(\map_*(\Sigma_{g,1}',S^{2m}_{\Q}))$ from Theorem \ref{thm:mapsfromsurfaces}, we get the desired isomorphism $E_2^{*,*}(r)\cong R^{*,*}$.

We use the map $b$ to compute the differentials. Up to homotopy $b$ is the map $\map_*(\vee_{2g}S^1,S^{2m})\to \map_*(S^1,S^{2m})$ corresponding to the homomorphism $\Z\to \Z^{*2g}$ defined by $1\mapsto \zeta$ where \begin{equation}
    \zeta=a_1a_2a_1^{-1}a_2^{-1}...a_{2g-1}a_{2g}a_{2g-1}^{-1}a_{2g}^{-1}\in\Z^{*2g}.
\end{equation} 
From Proposition \ref{thm:mapsfromwedges} and Example \ref{ex:boundarywordcontent}, we get that $b^*:\Q[y]\otimes \Lambda[x]\to \Q[y_1,...,y_{2g}]\otimes \Lambda[x_1,...,x_{2g}]$ maps $b^*(x)=0$ and $b^*(y)=2\omega$. Therefore, by comparing with the differentials of \eqref{eq:SSSomega2spheres}, we obtain that $d_{m-1}(w)=0$ and $d_{2m-2}(v)=2\omega$ in $E^{*,*}_*(r)$.
\end{proof}

The fibrations $r$ and $\chi\circ r$ are compared by the map of fibrations $(\id_M,\chi)$ as shown in diagram
\begin{equation}
    \begin{tikzcd}
    \Omega^2S^{2m}_{\Q}\rar[hook,"i"]\dar["j"] &
    \map_{\partial}(\Sigma_{g,1},S^{2m}_{\Q})\rar["r"]\dar[equal]
    &\map_{*}(\Sigma_{g,1},S^{2m}_{\Q})\dar["\chi"]\\
    F_{\chi\circ r}\rar["i_{\chi\circ r}"]&
    \map_{\partial}(\Sigma_{g,1},S^{2m}_{\Q})\rar["\chi\circ r"] &
    \map_*(\Sigma_{g,1}',K(\Q,2m)).
    \end{tikzcd}
\label{eq:diagramcomparison of fibrations}    
\end{equation}

\begin{definition}[The algebra $Q$]\label{def:algebraQ}
Let
    \begin{equation}
    Q^{*,*,(*)}_m:=\Q[y_1,...,y_{2g},w]\otimes \Lambda[x_1,...,x_{2g},v]
\end{equation}
be the weighted graded differential $\Gamma_{g,1}$-algerba with 
\begin{itemize}
    \item degrees/weights $|x_i|=(\mu,0,(1))$, $|y_i|=(0,2\mu,(2))$,
    $|w|=(0,\mu-1,(1))$ and $|v|=(0,2\mu-1,(2))$,
    
    \item differential $d_Q$ given by 
\begin{equation}
    d_Q(v)=2\omega=2(x_1x_2+...+x_{2g-1}x_{2g}),
\end{equation}
    vanishing on the $x_i$, $y_i$ and $w$, and extended using the Leibniz rule, and  
    
    \item the $\Gamma_{g,1}$-action trivial on $w$ and $v$, symplectic both on the $x_i$ and the $y_i$.
\end{itemize}
\end{definition}
\begin{remark}\label{rmk:comparisonRQ}
As a $\Q$-algebra, $\Tot(Q,d_Q)^*=\Tot(R,d_R)^*$, but not as $\Gamma_{g,1}$-algebras: they differ in the $\Gamma_{g,1}$-action on the $y_i$. The $\Q$-algebra $(Q^{*,*},d_Q)$ and the bigraded version $(R^{*,*},d_R)$ themselves only differ in the bidegrees of the $y_i$, with $|y_i|_R=(2\mu,0)$ but $|y_i|_Q=(0,2\mu)$. See Figure \ref{fig:algerbasRQ} for a schematic.
\end{remark}

\begin{lemma}
     The Serre spectral sequence of $\chi\circ r$ has only non-trivial differential $d_{4m-3}$ and there is an isomorphism 
    \begin{equation}
        (E_2^{*,*},d_{4m-3})\cong (Q^{*,*},d_Q).
    \end{equation}
    of weighted differential $\Gamma_{g,1}$-algebras.
    
    Furthermore, under this isomorphism and the one from Lemma \ref{lem:sssofr}, the map of fibrations $(\id_M,\chi)$ induces the map of spectral sequences 
    $E^{*,*}_{2}(\chi\circ r)\to E^{*,*}_{2}(r)$ given by the map $$\phi:R^{*,*}\to  Q^{*,*}$$ that maps the $w,v,x_i$ to themselves and vanishes on the $y_i$.
    \label{lem:sssofchir}
\end{lemma}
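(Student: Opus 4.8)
The plan is to first compute the weighted $\Gamma_{g,1}$-algebra $H^*(F_{\chi\circ r})$, then run the Serre spectral sequence of $\chi\circ r$, using the comparison with the spectral sequence of $r$ from Lemma~\ref{lem:sssofr} to pin down its differentials.

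For the fibre I would use the fibration $\bar r\colon F_{\chi\circ r}\to\map_*(\Sigma_{g,1}',K(\Q(2),4m-1))$ with fibre $\Omega^2S^{2m}_{\Q}$. By the pullback square~\eqref{eq:pullbackdiagram} and the right-hand square of~\eqref{eq:pathspaceomegasphere}, $\bar r$ is pulled back from the contractible fibration $r_{\partial D^2}$ along $b\circ i_{\chi}$, so its only possibly nonzero transgressions are the pullbacks of the transgressions $w\mapsto x$, $v\mapsto y$ of $r_{\partial D^2}$. Since $b^*(x)=0$ and $b^*(y)=2\omega$ (proof of Lemma~\ref{lem:sssofr}) and $i_{\chi}^*$ is the projection $\Q[y_1,\dots,y_{2g}]\otimes\Lambda[x_1,\dots,x_{2g}]\to\Q[y_1,\dots,y_{2g}]$ killing the $x_i$ (Theorem~\ref{thm:mapstoEM}), both transgressions vanish. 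Hence the spectral sequence of $\bar r$ collapses, and a short degree count (the base cohomology $\Q[y_1,\dots,y_{2g}]$ being concentrated in degrees divisible by $2\mu$) shows there are no multiplicative or $\Gamma_{g,1}$-equivariant extension issues, giving $H^*(F_{\chi\circ r})\cong\Q[y_1,\dots,y_{2g}]\otimes S[w,v]$ as weighted $\Gamma_{g,1}$-algebras, with the $y_i$ carrying the \emph{symplectic} action (pulled back from the base of $\bar r$, by Theorem~\ref{thm:mapstoEM}) and $w,v$ trivial. The conceptual point, and the reason $Q$ differs from $R$, is that the Johnson twist of the $y_i$ lands in $\Lambda^2\{x_i\}$ and is thus annihilated on passing to the fibre of $\chi$.

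Next, the base $\map_*(\Sigma_{g,1}',K(\Q(1),2m))$ is simply connected for $m\ge2$ with cohomology $\Lambda[x_1,\dots,x_{2g}]$ under the symplectic action (Theorem~\ref{thm:mapstoEM}), so the $E_2$-page of the spectral sequence of $\chi\circ r$ is $\Lambda[x_i]\otimes H^*(F_{\chi\circ r})$ with the diagonal action, which is exactly $Q^{*,*}$ of Definition~\ref{def:algebraQ} after identifying the base generators with the $x_i$. The morphism of fibrations $(\id_M,\chi)$ of~\eqref{eq:diagramcomparison of fibrations} induces a morphism of spectral sequences $E_*(\chi\circ r)\to E_*(r)$; on $E_2$ it is $\chi^*\otimes j^*$, where $\chi^*$ is the inclusion $\Lambda[x_i]\hookrightarrow\Q[y_i]\otimes\Lambda[x_i]$ and $j^*$ the projection $\Q[y_i]\otimes S[w,v]\to S[w,v]$ killing the $y_i$ (as $\bar r\circ j$ is null and the spectral sequence of $\bar r$ collapses), i.e.\ the algebra map $\phi$ of the statement, fixing $w,v,x_i$ and killing the $y_i$. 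It remains to compute the differentials on $Q^{*,*}$: the $x_i$ lie in the bottom row and are permanent cycles; $d_r(w)$ has weight $1$ and $d_r(y_i)$ has weight $2$, and both lie in $\ker\phi$ (since $\phi(w)$ and $\phi(y_i)$ have vanishing differential in $E_*(r)$), but $\ker\phi$ has no weight-$1$ elements and its only weight-$2$ elements, the $y_i$, sit in total degree $2\mu$, which is not the degree of $d_r(y_i)$ — so $d_r(w)=d_r(y_i)=0$; and for $v$, the differentials $d_r(v)$ with $r<2\mu$ target positions of $Q^{*,*}$ that vanish for $m\ge2$, while $\phi(d_{2\mu}(v))=d^r_{2\mu}(v)=2\omega$ and $\phi$ is injective on $E_{2\mu}^{2\mu,0}=\Lambda^2\{x_i\}$, forcing $d_{2\mu}(v)=2\omega$. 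This yields $(E_2^{*,*}(\chi\circ r),d_{4m-3})\cong(Q^{*,*},d_Q)$ together with the claimed description of the comparison map.

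The step I expect to be the main obstacle is making the $\Gamma_{g,1}$-structure on $H^*(F_{\chi\circ r})$ precise — in particular recognising that its $y_i$ carry the symplectic action rather than the Johnson action they carry in $H^*(\map_*(\Sigma_{g,1}',S^{2m}_{\Q}))$ — and, relatedly, dispatching the several extension problems (in the algebra and $\Gamma_{g,1}$-module structure of $H^*(F_{\chi\circ r})$, and in the differentials of the two spectral sequences) uniformly by weight-and-degree bookkeeping.
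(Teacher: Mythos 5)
Your proposal is correct and follows the same skeleton as the paper's proof: compute $H^*(F_{\chi\circ r})$ by collapsing the Serre spectral sequence of $\bar r$, identify $E_2^{*,*}(\chi\circ r)\cong Q^{*,*}$, identify the comparison map as $\chi^*\otimes j^*=\phi$, and then pin down the differential by comparison with $E_*^{*,*}(r)$. Two points differ. First, for the collapse of $E_*^{*,*}(\bar r)$ you pull back from the path--loop fibration $r_{\partial D^2}$ along $b\circ i_{\chi}$ and use naturality of transgression, whereas the paper compares $\bar r$ with $r$ via $(i_{\chi\circ r},i_{\chi})$; these are equivalent, since $E_*^{*,*}(r)$ was itself computed from $r_{\partial D^2}$ via $b$. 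Second, and more substantially, your endgame is a generator-by-generator weight-and-degree chase (no weight-$1$ classes in $\ker\phi$, the weight-$2$ part of $\ker\phi$ in the wrong degree, injectivity of $\phi$ on $Q^{2\mu,0}=\Lambda^2\{x_i\}$), while the paper, after extracting $d(v)=2\omega$ from the comparison, excludes all further differentials at once by the totalisation/dimension count of Remark \ref{rmk:comparisonRQ}: $\Tot(Q)=\Tot(R)$ and both spectral sequences converge to the same $H^*$, so any extra differential would make some total degree of $E_\infty(\chi\circ r)$ too small. Your route buys an explicit identification of each differential, but as written it only controls the pages $r\le 2\mu$, where the comparison target is still $R$ and the page is still generated by $x_i,w,y_i,v$; the clause ``only non-trivial differential $d_{4m-3}$'' also requires that nothing happens on later pages, where $E_{2\mu+1}=H(Q,d_Q)$ acquires new indecomposable classes of the form $[zv]$ with $z\in\ker(\omega\wedge-)$. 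This is easily repaired -- such classes sit in rows $\le 2\mu-1$ and the surviving generators $w,y_i$ admit no differentials of length $>2\mu$ for bidegree reasons, so all later differentials vanish on generators -- but you should state it (or simply invoke the paper's dimension count, which disposes of all later pages simultaneously). With that small addition, and with the page-by-page bookkeeping for which comparison map $\phi_r$ you are using made explicit, your argument is complete.
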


\begin{figure}
    \centering
    \includegraphics[width=0.7\textwidth]{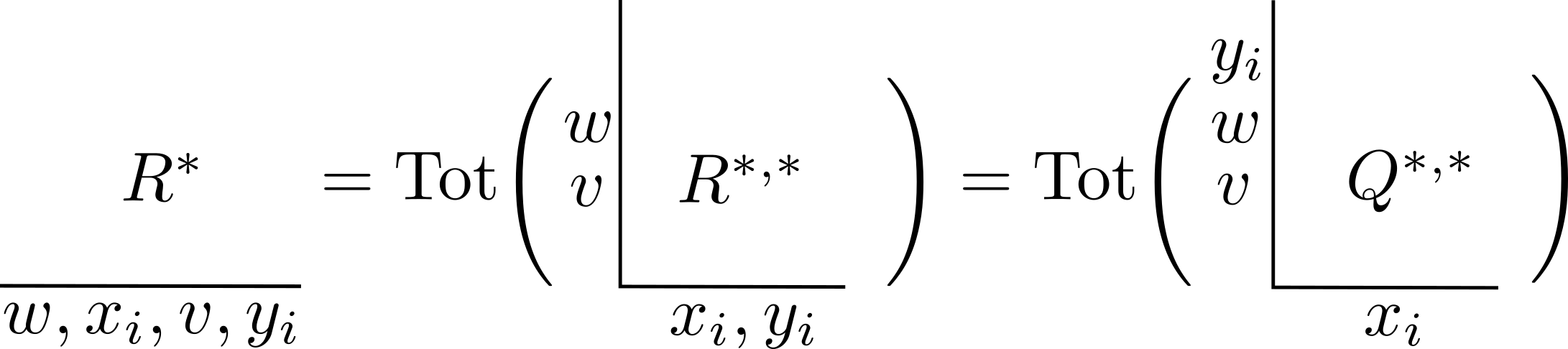}
    \caption{\footnotesize The algebras $R^{*,*}$ (Definition \ref{def:bigradedR}) and $Q^{*,*}$ (Definition \ref{def:algebraQ}) as totalisations of $R^*$. The first equality is $\Gamma_{g,1}$-equivariant, the second is not.}
    \label{fig:algerbasRQ}
\end{figure}
\begin{proof}
We compute $H^*(F_{\chi\circ r})$ by running the Serre spectral sequence of $\bar{r}$ from diagram \eqref{eq:bigdiagram}. Importing the cohomology of the fibre from Lemma \ref{lem:sssofr} and the basis from Theorem \ref{thm:mapstoEM}, we deduce that
\begin{equation}
    E_2^{*,*}(\bar{r})=S\Big[y_1[2\mu,0,(2)],...,y_{2g}[2\mu,0,(2)],w[0,\mu-1,(1)], v[0,2\mu-1,(2)]\Big]. 
    \label{eq:sssofbarr}
\end{equation}
Using the map of fibrations $(i_{\chi\circ r},i_{\chi})$ to compare with $r$ and Lemma \ref{lem:sssofr}, we conclude that all differentials in $E_*^{*,*}(\bar{r})$ are trivial, so $E_{\infty}^{*,*}(\bar{r})=E_{2}^{*,*}(\bar{r})$ is a free graded commutative algebra, with the generators $y_i,w,v$ separated by weight and total degree. Therefore
\begin{equation}\label{eq:homologyFchir}
    H^*(F_{\chi\circ r})=S\Big[y_1[2\mu,(2)],...,y_{2g}[2\mu,(2)],w[\mu-1,(1)], v[2\mu-1,(2)]\Big]
\end{equation}
with the symplectic $\Gamma_{g,1}$-action on the $y_i$ and the trivial on $v,w$.

From \eqref{eq:homologyFchir} and Theorem \ref{thm:mapstoEM}, the Serre spectral sequence of $\chi\circ r$ can be immediately deduced to have $E_2^{*,*}(\chi\circ r)\cong Q^{*,*}$ as weighted $\Gamma_{g,1}$-algebras.

Now, the induced map on $E_2^{*,*}(\chi\circ r)\to E_2^{*,*}(r)$ from the map of fibrations $(\id_M,\chi)$ in \eqref{eq:diagramcomparison of fibrations} is precisely $\chi^*\otimes j^*$. We compute it under the identifications $E^{*,*}_2(r)=R^{*,*}$ and $E^{*,*}_2(\chi \circ r)=Q^{*,*}$. 
Using the last part of Theorem \ref{thm:mapstoEM}, the map $\chi^*:H^*(\map_*\big(\Sigma_{g,1}',K(\Q,2m)\big))\to H^*(\map_*\big(\Sigma_{g,1}',S^{2m}_{\Q}\big))$ is the inclusion of algebras 
\begin{equation}
    \chi^*:\Lambda[x_1,...,x_{2g}]\to \Q[y_1,...,y_{2g}]\otimes\Lambda[x_1,...,x_{2g}].
\end{equation}
The map $j$ is the inclusion of the fibre of $\bar{r}$, so, from the spectral sequence of $\bar{r}$, the map $j^*:H^*(F_{\chi\circ r})\to H^*(\Omega^2S^{2m}_{\Q})$ is the algebra surjection
\begin{equation}
    \chi^*:\Q[y_1,...,y_{2g},w]\otimes\Lambda[v]\to
    \Q[w]\otimes\Lambda[v]
\end{equation}
vanishing on the $y_i$. 
Therefore $\chi^*\otimes j^*$ is precisely given by $\phi$ from the statement of the lemma.

Finally, since $\phi$ commutes with the differentials, we have $d_{4m-2}(v)=2\omega$ in $E_*^{*,*}(\chi\circ r)$. There can be no other non-trivial differentials, because $E^{*,*}_2(r)\cong R^{*,*}$ and $E^{*,*}_2(\chi\circ r)\cong Q^{*,*}$ have equal totalisations (see Remark \ref{rmk:comparisonRQ}) and they converge to the same algebra $H^*(\map_{\partial}(\Sigma_{g,1},S^{2m}_{\Q}))$; any more differentials and the dimension of $E^{*,*}_{\infty}(\chi \circ r)$ in some total degree would be too small. Thus, under $E_2^{*,*}\cong Q^{*,*}$, the differential is $d_{4m-3}=d_Q$ as desired. 
\end{proof}

\subsection{Bi-graded algebras and filtered graded algebras}\label{sec:bigradedalgebras}
Our aim here is to state and prove the \textit{lifting lemma} \ref{lem:liftinglemma}.

\begin{definition}
A \textit{filtered algebra} $(A^*,F^*_*)$ is a graded algebra $A^*$ with a filtration 
$$0=F^k_k\subset...\subset F^k_{i+1}\subset F^k_{i}\subset ...\subset F^k_0=A^k,$$
that is multiplicative in the sense that $F^k_i\cdot F^{k'}_{i'}\subset F^{k+k'}_{i+i'}$. The \textit{associated graded} algebra $\gr^F(A)^{*,*}$ is given by the filtration quotients $$\gr^F(A)^{p,q}=F_{q}^{p+q}/F_{q+1}^{p+q}$$ 
and inherits a product from $A^*$. 
A morphism $\phi:(A^*,F^*_*)\to (B^*,G^*_*)$ of graded algebras is \textit{filtered} if $\phi(F^k_i)\subset G^k_i$ for all $0\le i\le k$. In that case, $\phi$ has \textit{associated graded} morphism $$\gr(\phi):\gr^F(A)\to \gr^G(B)$$
which is also a morphism of bigraded algebras. 
\end{definition}

\begin{definition}
For a bigraded algebra $B^{*,*}$, the \textit{tautological filtration} $F_*^*(\Tot)$ of $\Tot(B)^*$ is $$F_{i}\Tot(B)^k =\bigoplus_{p+q=k,q\le i-k}B^{p,q}.$$
\end{definition}
\begin{remark}
Clearly, $\gr^F(\Tot(B))^{*,*}\cong B^{*,*}$ naturally, but in general $\Tot(\gr^F(A))^*$ $\not\cong A^*$.
\end{remark}

For a subset $S\subset \Z_{\ge 0}\times\Z_{\ge 0}$ define $B^S=\oplus_{(p,q)\in S}B^{p,q}$ the bigraded vector subspace of $B^{*,*}$. Write this inclusion as $i_S:B^S\hookrightarrow B^{*,*}$. Clearly if $S$ is additively closed and contains $(0,0)$, then $B^S\subset B^{*,*}$ and $B^S$ is, in fact, a subalgebra. 

\begin{definition}
For $k\ge 0$, we say that $(p,q)\in \Z_{\ge 0}\times\Z_{\ge 0}$ is \textit{a bottom entry in total degree $k$} of $B^{*,*}$ if $p+q=k$ and, for $p'+q'=k$ and $q'<q$, $B^{p',q'}=0$.
\end{definition}

\begin{definition}\label{def:bottomandclosed}
For a filtered algebra $(A^*,F^*_*)$, a set of indices $S\subset\Z_{\ge 0}\times\Z_{\ge 0}$ is \textit{bottom and closed} if it satisfies
\begin{enumerate}
    \item $(0,0)\in S$,
    
    \item each $(p,q)\in S$ is a bottom entry in its total degree for $\gr^F(A)^{*,*}$ and
    
    \item for each $(p,q),(p',q')\in S$, then $(p+p',q+q')$ is also a bottom entry of $\gr^F(A)^{*,*}$ and furthermore, if $\gr^F(A)^{p+p',q+q'}\neq 0$, then $(p+p',q+q')\in S$.
\end{enumerate}
\end{definition}

\begin{lemma}[Lifting Lemma]
Let $(A^*,F^*_*)$ be a filtered graded algebra with a bottom and closed set of indices  $S\subset \Z_{\ge 0}\times\Z_{\ge 0}$. 
Then, $\gr^F(A)^S\subset \gr^F(A)^{*,*}$ is a bigraded subalgebra, and this inclusion of algebras, $i_S$, has a canonical \textit{lift} 
\begin{equation}
    \phi_S: \big(\Tot(\gr^F(A)^S), F^*_*(\Tot)\big)\to (A^*, F^*_*),
\end{equation} 
in the sense that $\phi_S$ is a filtered monomorphism of algebras whose associated graded
\begin{equation}
    \gr(\phi_S): \gr^{F(\Tot)}\big(\Tot(\gr^F(A)^S)\big)=\gr^F(A)^S\to \gr^F(A)^{*,*}    
\end{equation}
is the inclusion $i_S$.
\label{lem:liftinglemma}
\end{lemma}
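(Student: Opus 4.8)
The plan is to construct $\phi_S$ directly on generators, degree by degree in the total grading, and then verify that the construction is forced (hence canonical) and has the claimed associated graded. First I would observe that condition (3) in Definition \ref{def:bottomandclosed} is exactly what is needed for $\gr^F(A)^S$ to be closed under the product of $\gr^F(A)^{*,*}$, so that $i_S$ is indeed a morphism of bigraded algebras; together with (1) this makes $\gr^F(A)^S$ a subalgebra. Next, for each $(p,q)\in S$ I would use that $(p,q)$ is a \emph{bottom} entry in total degree $k=p+q$ to identify $\gr^F(A)^{p,q}=F_q^k/F_{q+1}^k$ with $F_q^k$ itself: indeed ``bottom'' means $F_{q+1}^k=0$, because below $(p,q)$ in total degree $k$ the graded pieces of $\gr^F(A)$ vanish, i.e. $F_{q+1}^k=\bigcup_{q'>q}F_{q'+1}^k\cdot(\dots)=0$ (more precisely, the filtration on $A^k$ stabilises at $0$ once we pass below the bottom nonzero graded piece). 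Thus there is a \emph{canonical} $\Q$-linear section $\sigma_{p,q}:\gr^F(A)^{p,q}=F_q^k\hookrightarrow A^k$, landing in $F_q^k$.

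The core step is to show these sections assemble into an algebra map on $\Tot(\gr^F(A)^S)$. Since $\Tot(\gr^F(A)^S)=\bigoplus_{(p,q)\in S}\gr^F(A)^{p,q}$ as a vector space, define $\phi_S$ on the summand indexed by $(p,q)$ to be $\sigma_{p,q}$, and extend $\Q$-linearly. To see multiplicativity, take $(p,q),(p',q')\in S$ and $a\in\gr^F(A)^{p,q}$, $b\in\gr^F(A)^{p',q'}$; their product in $\Tot(\gr^F(A)^S)$ is their product in $\gr^F(A)^S$, living in $\gr^F(A)^{p+p',q+q'}$, which by (3) is again a bottom entry (so its canonical section is defined). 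Meanwhile $\sigma_{p,q}(a)\cdot\sigma_{p',q'}(b)\in F^{k+k'}_{q+q'}$ by multiplicativity of the filtration, and its image in $\gr^F(A)^{p+p',q+q'}=F^{k+k'}_{q+q'}/F^{k+k'}_{q+q'+1}$ is by definition the product $a\cdot b$ in $\gr^F(A)$. But $(p+p',q+q')$ being a bottom entry means $F^{k+k'}_{q+q'+1}=0$, so this image \emph{is} $\sigma_{p,q}(a)\cdot\sigma_{p',q'}(b)$ on the nose; hence $\sigma_{p+p',q+q'}(a\cdot b)=\sigma_{p,q}(a)\cdot\sigma_{p',q'}(b)$, which is precisely $\phi_S(ab)=\phi_S(a)\phi_S(b)$. (If $\gr^F(A)^{p+p',q+q'}=0$ then both sides are $0$ and there is nothing to check; this is the escape clause in (3).) Unitality is immediate since $(0,0)\in S$ and $\sigma_{0,0}$ sends $1$ to $1$.

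Finally I would check the filtration statements. By construction $\phi_S$ sends the summand $\gr^F(A)^{p,q}$ into $F_q^{p+q}$, and the tautological filtration on $\Tot(\gr^F(A)^S)$ places exactly that summand in $F_q\Tot$; hence $\phi_S$ is filtered. For the associated graded: on $\gr^{F(\Tot)}(\Tot(\gr^F(A)^S))^{p,q}=\gr^F(A)^{p,q}$, the map $\gr(\phi_S)$ is the composite $\gr^F(A)^{p,q}\xrightarrow{\sigma_{p,q}}F_q^{p+q}\twoheadrightarrow F_q^{p+q}/F_{q+1}^{p+q}=\gr^F(A)^{p,q}$, which is the identity since $\sigma_{p,q}$ was defined as a section of exactly this quotient map; restricted appropriately this is the inclusion $i_S$ into $\gr^F(A)^{*,*}$. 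Injectivity of $\phi_S$ then follows formally: a filtered map whose associated graded is injective is injective (filter an element by the largest $q$ with nonzero component; $\gr(\phi_S)$ nonzero on that component forces $\phi_S$ nonzero). The main obstacle is purely bookkeeping — being careful that ``bottom entry'' genuinely gives $F_{q+1}^{k}=0$ in $A^k$ (not merely that the $\gr$ piece vanishes), which requires noting that the filtration on a fixed $A^k$ only drops at indices $q$ where $\gr^F(A)^{k-q,q}\neq 0$; everything else is a direct verification.
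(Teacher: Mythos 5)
Your proposal is correct and follows essentially the same route as the paper's own proof: identify each bottom entry $\gr^F(A)^{p,q}$ canonically with the filtration step it represents inside $A^{p+q}$ (the next step vanishing), assemble these sections into $\phi_S$, verify multiplicativity by the same two-case analysis on whether $\gr^F(A)^{p+p',q+q'}$ vanishes, and read off filteredness, the associated graded, and injectivity. The only differences are cosmetic (you deduce injectivity from the general ``injective associated graded implies injective'' principle rather than from $S$ having at most one nontrivial entry per diagonal, and your index-convention bookkeeping for why bottomness kills the next filtration step is as loose as the paper's own), so nothing further is needed.
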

\begin{proof}
We first construct the linear map $\phi_S$ and prove it is a lift. The idea is that if $(p,q)$ is a bottom entry then $F^{p+q}_{i}=0$ for all $i>q$ and so $\gr^F(A)^{p,q}$ is canonically isomorphic to $F^{p+q}_{q}$. Composing this isomorphism with the inclusion $F^{p+q}_{q}\subset A^{p+q}$, provides a canonical linear monomorphism $\phi_{p,q}:\gr^F(A)^{p,q}\to A^{p,q}$, and the monomorphisms $\phi_{p,q}$ piece together to form the graded linear monomorphism $\phi_S:\Tot(\gr^F(A)^S)\to A^*$. It is then an easy tautological check that $\phi_S$ is filtered with associated graded, $\gr(\phi_S)$, equal to $i_S$ using that $S$ has at most one non-trivial entry in each diagonal, as it contains only bottom entries.

That $\gr^F(A)^S\subset \gr^F(A)^{*,*}$ is a subalgebra follows from Definition \ref{def:bottomandclosed} since $S$ being bottom and closed: from 1, $\gr^F(A)^S$ contains the unit; from 3, it is multiplicatively closed. It remains to prove that $\phi_S$ is multiplicative. 

We use that $S$ consists of bottom entries. For $i=1,2$, pick $(p_i,q_i)\in S$ and $z_i\in F^{p_i+q_i}_{q_i}$ and write $$[z_i]\in \gr^F(A)^{p_i,q_i}= F^{p_i+q_i}_{q_i}/F^{p_i+q_i}_{q_i+1}=F^{p_i+q_i}_{q_i}.$$ 
The last equality holds because $(p_i,q_i)$ is bottom. Observe that $\phi_S([z_i])=\phi_{p_i,q_i}([z_i])=z_i$. Denote by $\cdot_A$ the product in $A$ and $\cdot_{\gr}$ the product in $\gr(A)$. Then by definition, $$[z_1]\cdot_{\gr} [z_2]=[z_1\cdot_Az_2]\in \gr(A)^{p_1+p_2,q_1+q_2}.$$ 
By part 3 of Definition \ref{def:bottomandclosed}, $(p_1+p_2,q_1+q_2)$ is also a bottom entry, so we have two cases: 
\begin{enumerate}
    \item $\gr(A)^{p_1+p_2,q_1+q_2}\neq 0$ and so $(p_1+p_2,q_1+q_2)\in S$, therefore $$\phi_S([z_1]\cdot_{\gr}[z_2])=\phi_S([z_1\cdot_Az_2])=z_1 \cdot_A z_2=\phi_S(z_1)\cdot \phi_S(z_2),$$
    or
    
    \item $\gr(A)^{p_1+p_2,q_1+q_2}=0$, so $F^{p_1+p_2+q_1+q_2}_{q_1+q_2}=0$ and $z_1\cdot_A z_2=0$, and therefore $\phi_S([z_1]\cdot_{\gr}[z_2])=0=\phi_S(z_1)\cdot \phi_S(z_2)$.
\end{enumerate} 
In either case, $\phi_S$ is multiplicative.
\end{proof}

\subsection{Definition and proof of the isomorphism}\label{sec:thehomologyandthemap}
A fibration $f:E\to B$ induces a \textit{Serre filtration} $F^*_*(f)$ of $H^*(E)$ and canonical isomorphisms $\gr^f(H^*(E))\cong E^{*,*}_{\infty}(f)$. From now on, we use the shorthand $$H^*:=H^*(\mapd(\Sigma_{g,1},S^{2m}_{\Q})).$$ Fibrations $r$ and $\chi\circ r$ both have total space $\mapd(\Sigma_{g,1},S^{2m}_{\Q})$ so they define Serre filtrations $F^*_*(r)$ and $F^*_*(\chi\circ r)$ on $H^*$.
The map of fibrations $(\id_M,\chi)$ makes the identity map $\id_M$ of $\mapd(\Sigma_{g,1},S^{2m}_{\Q})$ into a filtered map
\begin{equation}
    \id_{H^*}:(H^*,F^*_*(\chi\circ r))\to (H^*,F^*_*(r)).
\end{equation}
We use these to produce a map $\sigma:H^*(R,d_R)\to H^*$ that will serve as the isomorphism from Theorem \ref{thm:maintheoremwithoutmap}.

We separate the polynomial and exterior part of $R^{*,*}$. The weighted bigraded $\Gamma_{g,1}$-subalgebra
\begin{equation}
(E^{*,*},d_E)=(\Lambda[x_1,...,x_{2g},v],d(v)=2\omega)\subset (R^{*,*},d_R)   
\end{equation}
is the exterior part. Naturally, $(E^{*,*},d_E)$ is also a differential bigraded $\Gamma_{g,1}$-subalgebra of $(Q^{*,*},d_Q)$. The weighted bigraded $\Q$-subalgebra 
\begin{equation}
    P^{*,*}=\Q[y_1,...,y_{2g},w]\subset R^{*,*}
    \label{eq:definitionofP}
\end{equation}
is the polynomial part. We endow algebra $P^{*,*}$ with the obvious $\Gamma_{g,1}$-action (symplectic on the $y_i$ and trivial on $w$) and the trivial differential, so that $(P^{*,*},0)=(R^{*,*},d_R)/(E^{*,*},d_E)$ is an isomorphism of $\Gamma_{g,1}$-algebras. However, the inclusion~\eqref{eq:definitionofP} is not $\Gamma_{g,1}$-equivariant. As $\Q$-algebras, we have
$(R^{*,*},d_R)=(P^{*,*},0)\otimes (E^{*,*},d_E)$
and therefore 
\begin{equation}
    H^{*,*}(R,d_R)=P^{*,*}\otimes H^{*,*}(E,d_E).
    \label{eq:homologydecomposition}
\end{equation} We make this into a $\Gamma_{g,1}$-isomorphism by insisting that the action on the $y_i$ in the right hand side is Johnson. See Figure \ref{fig:algerbasRPE} for a schematic representation.

We describe the differential $d_E$ and the homology $H^{*,*}(E,d_E)$. The $\sptg$-algebra $\Lambda^{*}=\Lambda^{*,0}:=\Lambda[x_1,...,x_{2g}]$, with $|x_i|=(\mu,0)$ has an $\sptg$-equivariant endomorphism $$\Phi:z\mapsto z\wedge \omega, \hspace{3pt}z\in \Lambda^{*}.$$ Denote by $V^{*}:=\Lambda^{*}/(\omega)$ the cokernel $\sptg$-algebra and $K^{*}:=\ker(\Phi)$, a graded $\sptg$-representation. Both $V^{*}$ and $K^{*}$ are naturally $\Lambda^{*}$-modules.

\begin{lemma}[The structure of $H^{*,*}(E,d_E)$]\label{lem:structureofhomology}
The following properties hold:
   \begin{enumerate}
       \item the algebra $E^{*,*}$ is the free $\Lambda^{*}$-module, \begin{equation}
           E^{*,*}=\Lambda^{*,0}\otimes\Q\{1,v\};
       \end{equation}
       
       \item the differential $d_E$ is given by $d_E(z)=0$ and $d_E(zv)=z\wedge\omega=2\Phi(z)$, for $z\in \Lambda^{*}$, and is a $\Lambda^{*}$-module map;
       
       \item the homology $H^{*,*}(E,d_E)$ is the weighted bigraded $\Lambda^{*}$-module
        \begin{equation}
        V^{*,0}\otimes \Q\{1\}\oplus K^{*,0}\otimes\Q\{v\}
        \end{equation}
       
       \item and it is non-zero precisely in the set of bidegrees
    \begin{equation}
        S_E:=\{\big(\mu k,0\big),k=0,1,...,g\}\cup \{\big(\mu k,2\mu-1\big),k=g,g+1,...,2g\}.
    \end{equation}
   \end{enumerate}
\end{lemma}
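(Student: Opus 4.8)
The plan is to verify the four claims in sequence, since each feeds into the next. First I would establish (1): by Definition~\ref{def:bigradedR}, $R^{*,*}=\Q[y_1,\dots,y_{2g},w]\otimes\Lambda[x_1,\dots,x_{2g},v]$, and $E^{*,*}=\Lambda[x_1,\dots,x_{2g},v]$ with $|v|=(0,2\mu-1)$. Since $v^2=0$ (as $v$ is an exterior generator of odd total degree), $E^{*,*}$ is the free module $\Lambda[x_1,\dots,x_{2g}]\otimes\Q\{1,v\}$ over $\Lambda^{*}:=\Lambda[x_1,\dots,x_{2g}]$, with $1$ in bidegree $(0,0)$ and $v$ in bidegree $(0,2\mu-1)$. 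This is immediate from the definition of the tensor/exterior algebra.

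For (2): the differential $d_R$ is defined to vanish on the $x_i$ and to send $v\mapsto 2\omega=2(x_1x_2+\dots+x_{2g-1}x_{2g})$, extended by the Leibniz rule. Restricting to the subalgebra $E^{*,*}$ gives $d_E(z)=0$ and, by Leibniz, $d_E(zv)=d_E(z)v\pm z\,d_E(v)=z\wedge 2\omega=2\Phi(z)$ for $z\in\Lambda^{*}$. Here one should record that $\omega$ has even total degree $2\mu$, so there is no sign issue, and the result lands in $\Lambda^{*,0}$. Because $d_E(z)=0$ for $z\in\Lambda^{*}$, the formula $d_E(zv)=z\,d_E(v)$ shows $d_E$ is a map of left $\Lambda^{*}$-modules.

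For (3): using the free module decomposition $E^{*,*}=\Lambda^{*}\{1\}\oplus\Lambda^{*}\{v\}$ from (1) and the description of $d_E$ from (2), the complex is $\Lambda^{*}\{v\}\xrightarrow{2\Phi}\Lambda^{*}\{1\}$ concentrated in these two module-summands, where $\Phi(z)=z\wedge\omega$. Hence $H^{*,*}(E,d_E)=\coker(2\Phi)\otimes\Q\{1\}\oplus\ker(2\Phi)\otimes\Q\{v\}$; since we are over $\Q$, $\coker(2\Phi)=\coker\Phi=V^{*}$ and $\ker(2\Phi)=\ker\Phi=K^{*}$, giving $H^{*,*}(E,d_E)=V^{*,0}\{1\}\oplus K^{*,0}\{v\}$ as a $\Lambda^{*}$-module (the module structure is inherited since $d_E$ is $\Lambda^{*}$-linear). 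One should note the bidegree bookkeeping: $V^{*}$ contributes in rows concentrated at $q=0$, and $K^{*}\{v\}$ in rows shifted by $(0,2\mu-1)$.

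For (4): this is the main computational step. I need the bidegrees where $V^{*}$ and $K^{*}$ are nonzero. Write $\Lambda^{*}=\Lambda[x_1,\dots,x_{2g}]$ with $|x_i|$ contributing $\mu$ to the first degree, so $\Lambda^{*}$ lives in first-degrees $0,\mu,2\mu,\dots,2g\mu$, i.e. $\Lambda^{k\mu,0}=\Lambda^k(\Q^{2g})$ for $k=0,\dots,2g$. The map $\Phi=(-\wedge\omega):\Lambda^k\to\Lambda^{k+2}$ is the classical Lefschetz operator for the symplectic form $\omega$ on $\Q^{2g}$. By the hard Lefschetz property of $\Lambda^{*}(\Q^{2g})$ with the symplectic form (equivalently, the $\mathfrak{sl}_2$-representation structure on $\Lambda^{*}(\Q^{2g})$), $\Phi$ is injective for $k\le g-1$ and surjective for $k\ge g-1$. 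Therefore $K^{*}=\ker\Phi$ is nonzero exactly in first-degrees $k\mu$ for $k=g,g+1,\dots,2g$, and $V^{*}=\coker\Phi$ is nonzero exactly in first-degrees $k\mu$ for $k=0,1,\dots,g$. Combining with the row-degrees from (3) ($q=0$ for the $V^{*}\{1\}$ part, $q=2\mu-1$ for the $K^{*}\{v\}$ part) yields exactly
\[
S_E=\{(\mu k,0): k=0,1,\dots,g\}\cup\{(\mu k,2\mu-1): k=g,g+1,\dots,2g\},
\]
as claimed.

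The main obstacle is item (4), and specifically invoking hard Lefschetz for $(\Lambda^{*}\Q^{2g},\omega)$ correctly: one must be sure that $-\wedge\omega$ is injective below the middle and surjective above it. This is standard ($\Lambda^{*}\Q^{2g}$ decomposes as an $\mathfrak{sl}_2$-module under the raising operator $-\wedge\omega$, the lowering operator contraction with the dual form, and the weight operator measuring degree away from the middle), but it is the one place the argument uses genuine input beyond formal manipulation; everything else is bookkeeping of bidegrees and the module structure. I would cite the $\mathfrak{sl}_2$-action on the exterior algebra of a symplectic vector space (e.g. as in standard references on symplectic linear algebra) rather than reprove it.
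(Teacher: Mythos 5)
Your proposal is correct and follows essentially the same route as the paper: (1)--(3) are formal consequences of the definitions, and (4) reduces to the standard symplectic linear algebra of $\Phi=(-\wedge\omega)$ on $\Lambda^{*}(\Q^{2g})$ (the paper cites the injectivity/surjectivity of $\Phi$ together with the fact that the cokernels in degrees $0,\dots,g$ are nonzero irreducibles and a duality $\coker$-$\ker$ identification, rather than naming hard Lefschetz). One small point to make explicit: injectivity for $k\le g-1$ and surjectivity for $k\ge g-1$ alone give only the vanishing direction of ``nonzero precisely''; the nonvanishing of $K$ in degrees $k\ge g$ and of $V$ in degrees $k\le g$ needs the extra observation that $\binom{2g}{k+2}<\binom{2g}{k}$ for $k\ge g$ and $\binom{2g}{k-2}<\binom{2g}{k}$ for $k\le g$ (or the duality/irreducibility argument the paper uses), which your $\mathfrak{sl}_2$ reference does supply.
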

\begin{proof}
Properties (1) and (2) are immediate and (3) follows from (2). Now, forgetting any gradings, the $\sptg$-equivariant map $\Phi:\Lambda^i(H)\to \Lambda^{i+2}(H)$ given by wedging with $\omega$ is injective for $i\le g-1$ and surjective for $i\ge g-1$ with $\Lambda^i(H)/\im(\Phi)$ a non-zero $\sptg$ irreducible for $i=0,...,g$ (see \cite{FultonHarrisRepTheory2004}). In  particular $\Lambda^i(H)/\im(\Phi)=0$
for $i>g$, and by duality $\Lambda^i(H)/\im(\Phi)\cong \ker^{2g-i}(\Phi)$ for all $i\ge 0$. Remembering the bigradings on the $x_i$, we get that $V^{*}$ is non-zero precisely in bidegrees $(\mu k),k=0,1,...,g$ and $K^{*}$ in bidegrees $(\mu k),k=g,g+1,...,2g$. Using $|v|=(0,2\mu-1)$ and (3), part (4) follows.
\end{proof}

\begin{figure}
    \centering
    \includegraphics[width=0.6\textwidth]{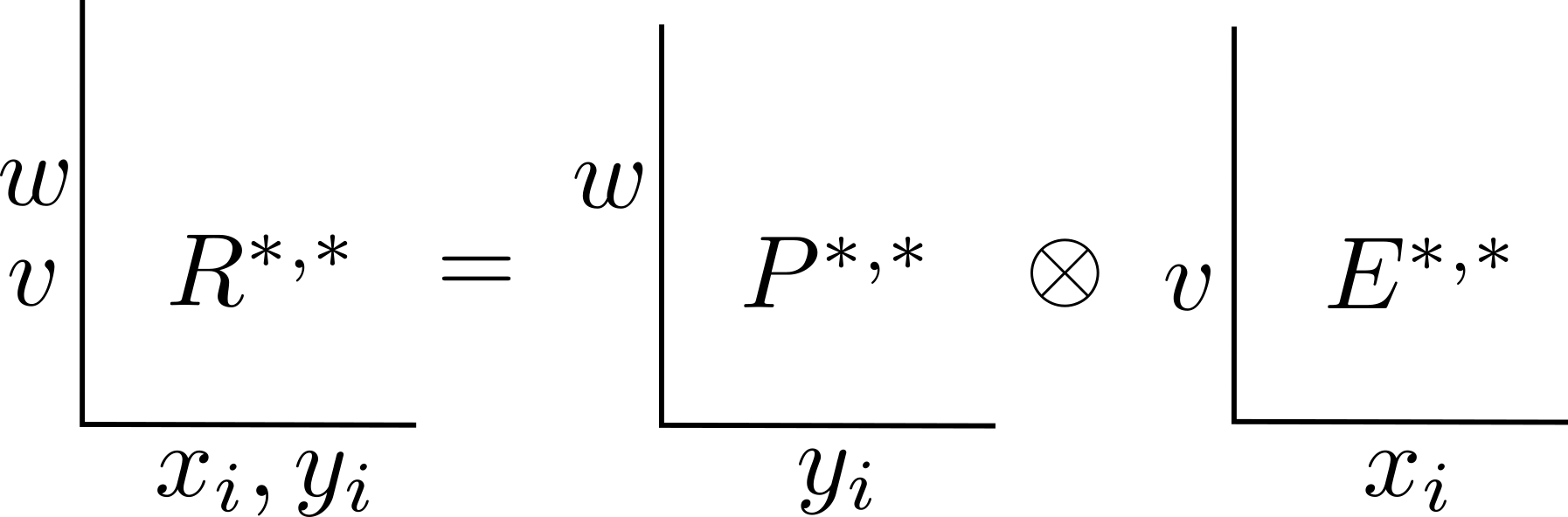}
    \caption{\footnotesize The decomposition of $R^{*,*}$ into polynomial part $P^{*,*}$ and exterior part $E^{*,*}$. The equality is as $\Q$-algebras. To make it $\Gamma_{g,1}$-equivariant, we must choose the Johnson action on the $y_i$ on the right.}
    \label{fig:algerbasRPE}
\end{figure}
\begin{lemma}
     The set of indices $S_E$ is bottom and closed for the bigraded algebra $E^{*,*}_{\infty}(\chi\circ r)$. 
     \label{lem:S_Eisbottom}
\end{lemma}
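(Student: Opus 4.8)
The plan is to verify the three conditions of Definition~\ref{def:bottomandclosed} for the set $S_E$ and the bigraded algebra $E^{*,*}_{\infty}(\chi\circ r)$. First I would identify what $E^{*,*}_{\infty}(\chi\circ r)$ actually is: from Lemma~\ref{lem:sssofchir}, the only non-trivial differential of the spectral sequence of $\chi\circ r$ is $d_{4m-3}=d_Q$, so $E^{*,*}_{\infty}(\chi\circ r)=H^{*,*}(Q,d_Q)$. Since $(Q^{*,*},d_Q)=(P^{*,*},0)\otimes(E^{*,*},d_E)$ as $\Q$-algebras (with $P^{*,*}=\Q[y_1,\dots,y_{2g},w]$ concentrated on the $p$-axis), the K\"unneth formula gives $E^{*,*}_{\infty}(\chi\circ r)=P^{*,*}\otimes H^{*,*}(E,d_E)$, where the bidegrees of the $y_i$ are $(0,2\mu)$ (the $Q$-convention). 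By Lemma~\ref{lem:structureofhomology}(4), $H^{*,*}(E,d_E)$ is supported on $S_E$; tensoring with $P^{*,*}$, whose monomials $y^{\mathbf a}w^b$ sit in bidegree $(b(\mu-1),\,2\mu|\mathbf a|)$ (up to the convention in force), shifts these. So the support of $E^{*,*}_{\infty}(\chi\circ r)$ is the set of sums $(\mu k,0)+(\text{$P$-shift})$ and $(\mu k,2\mu-1)+(\text{$P$-shift})$; the key combinatorial fact I will need is that these $P$-shifts never create a new entry below an element of $S_E$ in the same total degree, i.e. $S_E$ still consists of bottom entries after the shifts are accounted for.

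Concretely, condition~(1), $(0,0)\in S_E$, is immediate (take $k=0$ in the first family). For condition~(2), I must check each $(\mu k,0)$ with $0\le k\le g$ and each $(\mu k,2\mu-1)$ with $g\le k\le 2g$ is a bottom entry of $E^{*,*}_{\infty}(\chi\circ r)$ in its total degree. The entries on the $q=0$ line come only from $P^{p,0}\otimes H^{0,0}(E)\oplus V^{p,0}\otimes\Q\{1\}$, which certainly cannot be beaten downward since $q=0$ is minimal; so $(\mu k,0)$ is bottom. For $(\mu k,2\mu-1)$ I need that in total degree $\mu k+2\mu-1$ there is no non-zero entry with smaller $q$: the candidates with $q<2\mu-1$ in that total degree would have to come from $P^{p,q'}\otimes H^{p'',q''}(E,d_E)$ with $q'+q''<2\mu-1$; since $H(E,d_E)$ lives only in $q$-degrees $0$ and $2\mu-1$, this forces $q''=0$, $q'<2\mu-1$, i.e. the $P$-part involves only $w$'s (each $y_i$ contributes $q$-degree $2\mu$ in the $Q$-convention), and I need to check these degrees don't collide — this is a short numerical check using $\mu\ge 3$ (as $m\ge 2$) and the explicit degree of $w$, $(\mu-1)$ on the $p$-axis. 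Condition~(3) is the analogous statement for sums $(\mu k+\mu k',\,\epsilon+\epsilon')$ with $\epsilon,\epsilon'\in\{0,2\mu-1\}$: the sum $(\mu(k+k'),0)$ lands back in the first family provided $k+k'\le g$ (and is zero in $H(E)$ otherwise, since $V^{*}$ vanishes above degree $\mu g$), the sum $(\mu(k+k'),2\mu-1)$ lands in the second family provided $g\le k+k'\le 2g$, and the sum $(\mu(k+k'),2(2\mu-1))$ has $\Lambda$-part $K^{*}\cdot K^{*}$ inside $\Lambda[v]$ which is zero because $v^2=0$ — so that case never arises. In every case one verifies the sum is again a bottom entry, using the same "$q$ lives only in $\{0,2\mu-1\}$" observation as in condition~(2).

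I expect the main obstacle to be bookkeeping the grading conventions correctly — in particular keeping straight that in $Q^{*,*}$ the $y_i$ sit in bidegree $(0,2\mu)$ while in $R^{*,*}$ they sit in $(2\mu,0)$, and that the statement is about $E^{*,*}_{\infty}(\chi\circ r)$, hence the $Q$-convention — and then confirming the numerical non-collision inequalities that make the entries of $S_E$ genuinely \emph{bottom} once the polynomial factor $P^{*,*}$ is tensored in. None of the individual checks is deep; the care lies in organizing them so that the three defining properties of "bottom and closed" are each seen to hold. I would present the proof as: (i) identify $E^{*,*}_{\infty}(\chi\circ r)=P^{*,*}\otimes H^{*,*}(E,d_E)$; (ii) record the support; (iii) dispatch conditions (1)--(3) by the $q\in\{0,2\mu-1\}$ dichotomy plus the vanishing $v^2=0$ and $V^{>\mu g}=0=K^{<\mu g}$.
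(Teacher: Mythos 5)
Your overall route is the paper's: identify $E^{*,*}_{\infty}(\chi\circ r)$ as $\Q[y_1,\dots,y_{2g},w]\otimes H^{*,*}(E,d_E)$ with the polynomial generators in column $0$ (Lemma~\ref{lem:sssofchir}), and then check the three conditions of Definition~\ref{def:bottomandclosed} by a support analysis using that $H^{*,*}(E,d_E)$ lives only in rows $0$ and $2\mu-1$, that $V^{*}$ vanishes above column $g\mu$, and that all columns beyond $2g\mu$ vanish. Conditions (1), (2) and the first two cases of (3) are handled correctly in this way (for (2) the clean argument is simply that any lower entry in the same total degree would need a $V$-contribution in a column strictly greater than $g\mu$, so no extra numerical estimate is really required).

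The genuine gap is in your third case of condition (3), the sum of two indices from the second family. Condition (3) is a statement about the full bigraded piece $E^{p+p',q+q'}_{\infty}(\chi\circ r)$, not about the product of two chosen classes: when $k=k'=g$ the sum bidegree is $(2g\mu,\,4\mu-2)$, which is \emph{not} in $S_E$, so you must prove that the whole entry $E^{2g\mu,4\mu-2}_{\infty}(\chi\circ r)$ vanishes. Your argument ``$K^*\cdot K^*$ sits in $\Lambda[v]$ and $v^2=0$, so the case never arises'' only says that the product of the two lifted classes is zero; it says nothing about the entry, which decomposes as $\bigoplus_{s+q_0=4\mu-2}P^{0,s}\otimes H(E)^{2g\mu,q_0}$. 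Since $H(E)^{2g\mu,2\mu-1}=K^{2g\mu}\{v\}\neq 0$, one must rule out a monomial $w^l y^{\mathbf a}$ of row degree $s=2\mu-1$; this is exactly the paper's small parity computation ($l(\mu-1)+2k\mu$ is even because $\mu$ is odd, while $2\mu-1$ is odd), and it is not a formality: for instance when $g=0$ the column $2g\mu=0$ also carries $P^{0,4\mu-2}\otimes V^{0}$, and for $\mu=3$ the monomial $w^5$ sits in bidegree $(0,4\mu-2)$, so the vanishing genuinely depends on such degree arithmetic (and on $g\ge 1$ to kill the $V$-contribution in column $2g\mu$). So you need to replace the $v^2=0$ step by this explicit check on the row degrees of the polynomial part; with that, and with the bidegree of $w$ kept consistently at $(0,\mu-1)$ in the $Q$-convention, your proof goes through as in the paper.
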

\begin{proof}
For this argument we write $E^{*,*}_{\infty}:=E^{*,*}_{\infty}(\chi\circ r)$. 
From Lemma \ref{lem:sssofchir}, we have $$\gr^{\chi\circ r}(H^*)=E^{*,*}_{\infty}=\Q\big[y_1[0,2\mu],...,y_{2g}[0,2\mu],w[0,2\mu-1]\big]\otimes H^{*,*}(E,d_E)$$
with all the polynomial generators in the 0-th column. This means that $E^{*,*}_{\infty}$ is the direct sum of copies of $H^{*,*}(E,d_E)$ translated upwards in different rows. This immediately shows $S_E$ satifies the first two conditions of Defintion \ref{def:bottomandclosed} for $E^{*,*}_{\infty}$. It also shows that $E^{p,q}_{\infty}$ vanishes for columns $p> 2g\mu$ so any entry $(p,q)$ with $p\ge 2g\mu$ is automatically bottom. We check the third condition of Defintion \ref{def:bottomandclosed}. 
\begin{enumerate}
    \item For $(i_1,0),(i_2,0)\in \{\big(\mu k,0\big),k=0,1,...,g\}$, the entry $(i_1+i_2,0)$ is bottom by being in the bottom row, and either $i_1+i_2\le g\mu$ so $(i_1+i_2,0)\in S_E$ if or otherwise  $E^{i_1+i_2,0}_{\infty}=0$.
    
    \item For $(i_1,0)\in \{\big(\mu k,0\big),k=0,1,...,g\}$ and $(i_2,2\mu-1)\in\{\big(\mu k,2\mu-1\big),k=g,g+1,...,2g\}$, the entry $(i_1+i_2,2\mu-1)$ is either in $S_E$ if $i_1+i_2\le 2g\mu$, or $i_1+i_2\le 2g\mu$ and so $(i_1+i_2,2\mu-1)$ is bottom and $E^{i_1+i_2,2\mu-1}_{\infty}=0$.
    
    \item For $(i_1,2\mu-1),(i_2,2\mu-1)\in\{\big(\mu k,2\mu-1\big),k=g,g+1,...,2g\}$, $i_1+i_2\ge 2g\mu$ so $(i_1+i_2,4\mu-2)$ is bottom. If $i_1+i_2> 2g\mu$, then $E^{i_1+i_2,4\mu-2}_{\infty}=0$. Otherwise $i_1=i_2=g\mu$,  we also check that $E^{2g\mu,4\mu-2}_{\infty}=0$. Every  $E^{2g\mu,q}_{\infty}$ is a vertical translation of $E^{2g\mu,2\mu-1}_{\infty}$ by a product of $w$ and $y_i$ and any such monomial has bidegree $(0,l(\mu-1)+2k\mu)$, $l,k\in \Z_{\ge 0}$. The equation $2\mu-1+l(\mu-1)+2k\mu=4\mu-2$ has no solutions for $l,k\in \Z_{\ge 0}$.
\end{enumerate}
\end{proof}

\begin{proposition}
     There is a weighted $\Gamma_{g,1}$-equivariant filtered algebra monomorphism
     \begin{equation*}
       \sigma_E:\big(\Tot(H(E,d_E))^*,F^*_*(\Tot)\big)\to \big(H^*,F^*_*(r)\big)
     \end{equation*}
     whose associated graded is the inclusion
     \begin{equation*}
     H^{*,*}(E,d_E)\to H^{*,*}(R,d_R)\cong E_{\infty}^{*,*}(r)
     \end{equation*}
     from the decomposition \eqref{eq:homologydecomposition} and the isomorphism from Lemma \ref{lem:sssofr}.
    \label{prop:sigmaH} 
\end{proposition}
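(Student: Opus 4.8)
The plan is to combine the Lifting Lemma~\ref{lem:liftinglemma} with the comparison of the two Serre filtrations coming from the map of fibrations $(\id_M,\chi)$. First, I would apply Lemma~\ref{lem:liftinglemma} to the filtered algebra $(H^*,F^*_*(\chi\circ r))$ with the set of indices $S_E$, which is bottom and closed by Lemma~\ref{lem:S_Eisbottom}; this yields a filtered algebra monomorphism
\begin{equation*}
    \phi_{S_E}:\big(\Tot(\gr^{\chi\circ r}(H^*)^{S_E}),F^*_*(\Tot)\big)\to (H^*,F^*_*(\chi\circ r))
\end{equation*}
whose associated graded is the inclusion $i_{S_E}$. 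Now the point is that $\gr^{\chi\circ r}(H^*)^{S_E}$ can be identified with $H^{*,*}(E,d_E)$: from the proof of Lemma~\ref{lem:sssofchir} we have $E_\infty^{*,*}(\chi\circ r)=\Q[y_1,\dots,y_{2g},w]\otimes H^{*,*}(E,d_E)$ with all polynomial generators in the $0$-th column, so the bottom-and-closed subalgebra sitting in the indices $S_E$ is exactly the copy of $H^{*,*}(E,d_E)$ where no $y_i$ or $w$ appears; this is a $\Gamma_{g,1}$-equivariant identification, with the symplectic action on the $x_i$ (the Johnson action specializes to the symplectic one on the $y_i$-free part). So $\phi_{S_E}$ gives a weighted $\Gamma_{g,1}$-equivariant filtered monomorphism $H^{*,*}(E,d_E)\hookrightarrow H^*$ with respect to $F^*_*(\chi\circ r)$.

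The next step is to upgrade this to a statement about the \emph{other} filtration $F^*_*(r)$. Here I would use that the identity map $\id_{H^*}:(H^*,F^*_*(\chi\circ r))\to(H^*,F^*_*(r))$ is filtered (this is exactly what the map of fibrations $(\id_M,\chi)$ provides, as noted just before the statement), and that on associated gradeds it realizes the map $\phi:R^{*,*}\to Q^{*,*}$ from Lemma~\ref{lem:sssofchir}, or rather its inverse direction. Composing, $\sigma_E:=\id_{H^*}\circ\phi_{S_E}$ is a filtered monomorphism into $(H^*,F^*_*(r))$. To compute its associated graded I would trace through: $\gr(\phi_{S_E})$ is the inclusion of $H^{*,*}(E,d_E)$ into $E_\infty^{*,*}(\chi\circ r)$, and the identification $E_\infty^{*,*}(\chi\circ r)\to E_\infty^{*,*}(r)$ induced by $\gr(\id_{H^*})$ matches the bidegrees appropriately (the exterior generators $x_i,v$ occupy the same bidegrees in $R^{*,*}$ and $Q^{*,*}$, by Remark~\ref{rmk:comparisonRQ}). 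Under $E_\infty^{*,*}(r)\cong \gr^r(H^*)$ and the decomposition~\eqref{eq:homologydecomposition} $H^{*,*}(R,d_R)=P^{*,*}\otimes H^{*,*}(E,d_E)$, this composite is precisely the inclusion of $H^{*,*}(E,d_E)$ as the $1\otimes H^{*,*}(E,d_E)$ summand, as claimed.

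The main subtlety, and the step I expect to require the most care, is checking that $\sigma_E$ is \emph{multiplicative} and has the \emph{correct} associated graded as a map of $\Gamma_{g,1}$-algebras into the $r$-filtration, since the two filtrations $F^*_*(r)$ and $F^*_*(\chi\circ r)$ are genuinely different (e.g. the $y_i$ sit in row $0$ for $r$ but in a positive row for $\chi\circ r$) and the $\Gamma_{g,1}$-action on the $y_i$ differs between $R^{*,*}$ and $Q^{*,*}$. Multiplicativity of $\phi_{S_E}$ is Lemma~\ref{lem:liftinglemma}; multiplicativity of $\id_{H^*}$ is automatic; so $\sigma_E$ is multiplicative. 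For the associated graded: I would verify that $S_E$ consists of bottom entries \emph{also} for $E_\infty^{*,*}(r)$ — which is immediate since $H^{*,*}(E,d_E)$ is itself supported exactly in $S_E$ by Lemma~\ref{lem:structureofhomology}(4) and the extra generators of $R^{*,*}$ only raise the row index — so that the bidegree bookkeeping through the two identifications $E_\infty(\chi\circ r)$ and $E_\infty(r)$ is unambiguous. The weighted equivariance is inherited from $\phi_{S_E}$ and from the $\Gamma_{g,1}\times\Q^*$-equivariance of all spectral sequence identifications established in Section~\ref{sec:thefibrations}.
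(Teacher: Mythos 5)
Your argument is essentially the paper's own proof: you apply the Lifting Lemma~\ref{lem:liftinglemma} to $(H^*,F^*_*(\chi\circ r))$ with the bottom-and-closed set $S_E$ (Lemma~\ref{lem:S_Eisbottom}), identify $E_\infty^{S_E}(\chi\circ r)$ with $H^{*,*}(E,d_E)$, and compose with the filtered identity $\id_{H^*}:(H^*,F^*_*(\chi\circ r))\to(H^*,F^*_*(r))$, using the second part of Lemma~\ref{lem:sssofchir} to identify the associated graded. The extra bookkeeping you include (bidegree matching of the exterior generators via Remark~\ref{rmk:comparisonRQ} and the multiplicativity/equivariance checks) is consistent with, and slightly more explicit than, the paper's version.
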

\begin{proof}
Observe that $H^{*,*}(E,d_E)=E^{S_E}_{\infty}(\chi\circ r)$. From Lemma \ref{lem:S_Eisbottom}, $S_E$ is bottom and closed, so we apply the Lifting Lemma \ref{lem:liftinglemma} to deduce that: 1. 
$H^{*,*}(E,d_E)$ is a subalgebra of $E^{*,*}_{\infty}(\chi\circ r)$ and 2. the subalgebra-inclusion $H^{*,*}(E,d_E)\hookrightarrow E^{*,*}_{\infty}(\chi\circ r)$ lifts naturally to a map
$\sigma_H:(H^{*}(E,d_E), F^*_*(\Tot))\to (H^*,F^*_*(\chi\circ r))$.
Finally, we compose with the filtered identity map $\id_{H^*}:(H^*,F^*_*(\chi\circ r))\to (H^*,F^*_*(r))$, and taking associated graded. The second part of Lemma \ref{lem:sssofchir} gives the associated of this map.
\end{proof}

\begin{proposition}\label{prop:sigmaP} 
      There is a weighted filtered algebra monomorphism
     \begin{equation*}
       \sigma_P:\big(\Tot(P)^{*},F^*_*(\Tot)\big)\to \big(H^*,F^*_*(r)\big)
     \end{equation*}
     whose associated graded map is the inclusion
     \begin{equation*}
     P^{*,*}\to E_{\infty}^{*,*}(r)=H^{*,*}(E,d_E)\otimes P^{*,*}.
     \end{equation*}
\end{proposition}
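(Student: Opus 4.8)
The construction is direct, not via the Lifting Lemma~\ref{lem:liftinglemma}. The plan exploits that $P^{*,*}=\Q[y_1,\dots,y_{2g},w]$ is \emph{free} as a commutative algebra on generators of even total degree: any choice of classes in $H^*$ lifting $y_1,\dots,y_{2g},w$ extends uniquely to a $\Q$-algebra map $\sigma_P\colon\Tot(P)^*\to H^*$, and all that is needed is to choose the lifts so that $\sigma_P$ becomes filtered with the prescribed associated graded. Note that, unlike $\sigma_E$ of Proposition~\ref{prop:sigmaH}, no $\Gamma_{g,1}$-equivariance is claimed, and none can hold: $P^{*,*}$ with its symplectic action on the $y_i$ is not a $\Gamma_{g,1}$-subalgebra of $R^{*,*}$.

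\emph{Lifting the $y_i$.} In the Serre spectral sequence of $r$ (Lemma~\ref{lem:sssofr}), $y_i$ is a class of the base $\map_*(\Sigma_{g,1}',S^{2m}_\Q)$ occupying the bottom-row entry $E_2^{2\mu,0}(r)$. No differential leaves the bottom row, and the unique non-trivial differential cannot hit $E_2^{2\mu,0}(r)$, since its source would lie in column $1$, where the base cohomology vanishes (all its generators have degree $\ge\mu\ge 3$). Hence $y_i$ is a permanent cycle, and I set $\tilde y_i:=r^*(y_i)\in H^{2\mu}$. By the edge-homomorphism property, $\tilde y_i$ lies in the deepest step of the Serre filtration $F^*_*(r)$ on $H^{2\mu}$ and its image in $\gr^{F(r)}(H^*)\cong E_\infty^{*,*}(r)\cong H^{*,*}(R,d_R)$ is $y_i$, that is, $y_i\otimes 1$ under the decomposition \eqref{eq:homologydecomposition}. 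Since $r$ is $\Q^*$-equivariant, $\tilde y_i$ is weight-homogeneous of weight $(2)$, matching $|y_i|$.

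\emph{Lifting $w$.} The key point is that every non-zero bidegree of $R^{*,*}$ lies in a column divisible by $\mu=2m-1\ge 3$, since each generator of $R^{*,*}$ sits in column $0$, $\mu$ or $2\mu$. The same holds for $E_\infty^{*,*}(r)\cong H^{*,*}(R,d_R)$, so in total degree $\mu-1<\mu$ this associated graded is concentrated in the single bidegree $(0,\mu-1)$, the line spanned by $w$. Hence $H^{\mu-1}\cong E_\infty^{0,\mu-1}(r)$, a one-dimensional space of weight $(1)$, and I let $\tilde w\in H^{\mu-1}$ be the preimage of $w$; it is automatically weight-homogeneous of weight $(1)$.

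\emph{Checking the conclusions.} For \emph{filteredness}: by definition of the tautological filtration, a monomial $y^aw^b$ sits in precisely the filtration step of $\Tot(P)^*$ indexed by its bidegree $(2\mu|a|,b(\mu-1))$, and its image $\prod_i\tilde y_i^{\,a_i}\tilde w^{\,b}$ lies in the corresponding step of $F^*_*(r)$ because each factor does and the Serre filtration is multiplicative. For the \emph{associated graded}: $\gr(\sigma_P)\colon P^{*,*}\cong\gr^{F(\Tot)}\bigl(\Tot(P)\bigr)\to\gr^{F(r)}(H^*)\cong E_\infty^{*,*}(r)$ is an algebra map sending $y_i\mapsto y_i\otimes 1$ and $w\mapsto w\otimes 1$ by the two previous paragraphs, and the inclusion $P^{*,*}\hookrightarrow E_\infty^{*,*}(r)$ from \eqref{eq:homologydecomposition} is an algebra map agreeing with it on generators, so the two coincide. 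For \emph{injectivity}: $\gr(\sigma_P)$ is this inclusion, hence injective, and a filtered map of algebras with filtration finite in each total degree and injective associated graded is injective (a non-zero kernel element, considered in the deepest step it occupies, would yield a non-zero associated-graded class mapping to $0$). Weight-preservation is immediate, as $\sigma_P$ is generated by the weight-homogeneous classes $\tilde y_i$ and $\tilde w$. The only non-formal ingredient — hence the step to be careful about — is the survival and exact filtration placement of the generators; the divisibility-by-$\mu$ sparsity of $R^{*,*}$ is precisely what makes $\tilde w$ canonical and keeps these placements under control.
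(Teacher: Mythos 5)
Your proposal is correct and follows essentially the same route as the paper: the paper's proof likewise lifts the generators $y_1,\dots,y_{2g},w$ canonically (because their bidegrees in $E_\infty^{*,*}(r)$ are bottom entries), extends multiplicatively using freeness of $P$, and concludes injectivity from the associated graded being the inclusion. Your explicit realisations of these lifts --- $\tilde y_i=r^*(y_i)$ via the edge homomorphism, and $\tilde w$ via the fact that total degree $\mu-1$ meets only the bidegree $(0,\mu-1)$ --- are exactly the canonical bottom-entry lifts the paper invokes, just spelled out in more detail.
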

\begin{remark}
The morphism $\sigma_{P}$ is \textit{not} $\Gamma_{g,1}$-equivariant.
\end{remark}
\begin{proof}
The degrees of classes $y_1,...,y_{2g},w$ in $E_{\infty}^{*,*}(r)$ are bottom entries, and so there are canonical lifts of these classes into $y_1,...,y_{2g},w\in H^*$. Extend this to an algebra map $\sigma_P:\Q[y_1,...,y_{2g},w]\to H^*$ which is naturally filtered with respect to $F^*_*(\Tot)$ and $F^*_*(r)$, respectively. The associated graded map is therefore the claimed inclusion, which is a monomorphism. Therefore $\sigma_P$ itself is a monomorphism.
\end{proof}
Finally, by recalling that $R^*=\Tot(R^{*,*})^*$ and  $H^*(R,d_R)=\Tot(H^{*,*}(R,d_R))$, we use the decomposition \eqref{eq:homologydecomposition} to define the algebra morphism
\begin{equation}
    \sigma:=\sigma_P\otimes \sigma_E:\Tot(P)^*\otimes H^*(\Tot(E),d_E)=H^*(R,d_R)\to H^{*}.
\end{equation}
We prove that it has the properties required for Theorem \ref{thm:maintheoremwithoutmap}. 
\begin{proof}[Proof of Theorem \ref{thm:maintheoremwithoutmap}]
Since both $\sigma_P$ and $\sigma_E$ are filtered maps, the tensor product $$\sigma=\sigma_P\otimes \sigma_E:\big(\Tot(H(R,d_R))^*,F^*_*(\Tot)\big)\to \big(H^*,F^*_*(r)\big)$$ is also a filtered map and the associated graded $\gr(\sigma)$ is the tensor product $$\gr(\sigma_P)\otimes\gr(\sigma_E):H^{*,*}(R,d_R)\to E^{*,*}_{\infty}(r).$$ Combining the explicit expressions of $\gr(\sigma_P)$ from Proposition \ref{prop:sigmaH} and $\gr(\sigma_E)$ from Proposition \ref{prop:sigmaP}, and under the isomorphism $E^{*,*}_{\infty}(r)\cong H^{*,*}(R,d_R)$ from Lemma \ref{lem:sssofr}, the map $\gr(\sigma)$ is the identity. In particular, $\sigma$ is an isomorphism. It is weighted because both $\sigma_E$ and $\sigma_P$ are. 

It remains to prove that $\sigma$ is $\Gamma_{g,1}$-equivariant. We do so by exhibiting a generating set of $H^*(R,d_R)$ and checking that $\sigma$ is $\Gamma_{g,1}$-equivariant on the degrees that contain these generators. Obviously, $H^*(R,d_R)$ is generated by the subalgebra $\Tot(H(E,d_E))^*=H^*(\Lambda[x_1,...,x_{2g},v],d(v)=2\omega)$ and the elements $w$ and $y_1,...,y_{2g}$. We check these three cases separately.
\begin{enumerate}
    \item The restriction of $\sigma$ on $\Tot(H(E,d_E))^*\subset\Tot(H(R,d_R))^*$ is $\sigma_{E}$ which is $\Gamma_{g,1}$-equivariant from Proposition \ref{prop:sigmaH}. 
    
    \item The element $w\in H^{\mu-1}(R,d_R)$ has the trivial $\Gamma_{g,1}$ action. Furthermore, $H^{\mu-1}(R,d_R)$ is generated by $w$ as a vector space because all other algebra generators of $R^{*}$ have higher degrees, namely $\mu, 2\mu-1$ and $2\mu$. The Serre spectral sequence of $r$ from Lemma \ref{lem:sssofr} provides a $\Gamma_{g,1}$-equivariant isomorphism $H^{\mu-1}\to H^{\mu-1}(\Omega^2 S^{2m}_{\Q})\cong \Q$, with the trivial action on $\Q$. Thus $\sigma:\Q\langle w\rangle=H^{\mu-1}(R,d_R)\to H^{\mu-1}=\Q$ is (trivially) $\Gamma_{g,1}$-equivariant. 
    
    \item The $y_i$ have degree/weight $(2\mu,(2))$ in $H^*(R,d_R)$. By comparing degrees and weights of the $x_i,y_i,v$ and $w$, we see that the weight $2$ part $H^{2\mu,(2)}(R,d_R)$ is generated by the double wedges $x_ix_j$ and the $y_i$, in particular it is $\Gamma_{g,1}$-isomorphic to $H^{2\mu,0}(R,d_R)=H^{2\mu,0,(2)}(R,d_R)$. The associated graded map $\gr(\sigma)^{2\mu,0}:H^{2\mu,0}(R,d_R)\to E_{\infty}^{2\mu,0}$ is the $\Gamma_{g,1}$-equivariant isomorphism from Lemma \ref{lem:sssofr}.
    Finally, the entry $E_{\infty}^{2\mu,0}$ is bottom and therefore there is a natural, in particular $\Gamma_{g,1}$-equivariant, inclusion $E_{\infty}^{2\mu,0}\subset H^{2\mu}$. Combining these, the restriction of $\sigma$ on degree/weight $(2\mu,(2))$ is given by the $\Gamma_{g,1}$-equivariant composition 
    \begin{equation*}
    \sigma:H^{2\mu,(2)}(R,d_R)=
    \begin{tikzcd}
    H^{2\mu,0}(R,d_R)\rar["\gr(\sigma)^{2\mu,0}","\cong"'] & E_{\infty}^{2\mu,0}\subset H^{2\mu}
    \end{tikzcd}.
    \end{equation*}
    \end{enumerate}
    \end{proof}

\section{Cohomology of Configuration spaces of surfaces as $\Gamma_{g,1}$-representation}\label{sec:conclusion}

\begin{theorem}\label{thm:maintheoremsurfaces}
For $g,i,n\ge 0$, the $\Gamma_{g,1}$-representation $H^i(C_n(\Sigma_{g,1}))$
is isomorphic to the bidegree $(i,(n))$ part of the weighted graded commutative $\Gamma_{g,1}$-algebra
\begin{equation*}
    \Q[y_1,...,y_{2g},w]\otimes H^{*,*}(\Lambda[x_1,...,x_{2g},v], d(v)=2\omega)
\end{equation*}
with
\begin{itemize}
    \item bidegrees $|x_i|=(1,(1))$, $|y_i|=(2,(2))$,
    $|w|=(0,(1))$ and $|v|=(1,(2))$,
    
    \item differential $d$ given on $v$ by 
\begin{equation*}
    d(v)=2\omega=2(x_1x_2+...+x_{2g-1}x_{2g})
\end{equation*}
    and vanishing on the $x_i$, and extended using the Leibniz rule, and  
    
    \item the $\Gamma_{g,1}$-action trivial on $w$ and $v$, symplectic on the $x_i$ and the Johnson on the $y_i$.
\end{itemize}
\end{theorem}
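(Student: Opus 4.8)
The plan is to deduce Theorem~\ref{thm:maintheoremsurfaces} by assembling the results of the preceding sections; no new topological input is needed. Assume first $n\ge 1$. I would start from Theorem~\ref{thm:maintheoremforgeneralmanifolds} applied to $M=\Sigma_{g,1}$, which has $d=2$, with the parameter of that theorem taken to be an arbitrary $\ell\ge 1$. This gives a $\Gamma_{g,1}$-equivariant isomorphism
\[
H^i(C_n(\Sigma_{g,1}))\;\cong\;\widetilde H^{\,i+2\ell n}\big(\map_\partial(\Sigma_{g,1},S^{2+2\ell}_{\Q})\big)^{(n)},
\]
the sphere action on the right being purely by weights. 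Writing $m=\ell+1\ge 2$ so that the target sphere is $S^{2m}_{\Q}$, I would then substitute the mapping-space computation of Theorem~\ref{thm:maintheoremwithoutmap}, namely the weighted $\Gamma_{g,1}$-equivariant algebra isomorphism $H^*(\map_\partial(\Sigma_{g,1},S^{2m}_{\Q}))\cong H^*(R_m,d_R)$. Since a weighted isomorphism identifies weightspaces, and the weighting on the left is the one coming from the sphere action, taking the $n$-th weightspace in cohomological degree $i+2\ell n=i+2(m-1)n$ yields $H^i(C_n(\Sigma_{g,1}))\cong H^{\,i+2(m-1)n,\,(n)}(R_m,d_R)$ as $\Gamma_{g,1}$-representations.

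Next I would eliminate the dependence on the auxiliary $\ell$ using Remark~\ref{lem:allalgebrasSarethesame}: the algebras $R_m$ coincide as weighted ungraded differential $\Gamma_{g,1}$-algebras, with $R^{j,(n)}_1\cong R^{j+(\mu-1)n,(n)}_m$ where $\mu=2m-1$, hence $\mu-1=2(m-1)$, and the same relation passes to $d_R$-homology. Taking $j=i$, the shift cancels exactly, giving $H^{\,i,(n)}(R_1,d_R)\cong H^i(C_n(\Sigma_{g,1}))$ (in particular the answer does not depend on $\ell$, as it must not). It then remains to recognise $R_1$ as the algebra in the statement. Unwinding Definitions~\ref{def:bigradedR} and~\ref{def:algebraR} at $m=1$, so $\mu=1$: $R_1$ is the totalisation of $\Q[y_1,\dots,y_{2g},w]\otimes\Lambda[x_1,\dots,x_{2g},v]$ with tridegrees $|x_i|=(1,0,(1))$, $|y_i|=(2,0,(2))$, $|w|=(0,0,(1))$, $|v|=(0,1,(2))$, differential $d_R(v)=2\omega$ vanishing on the remaining generators, and $\Gamma_{g,1}$-action trivial on $w,v$, symplectic on the $x_i$, Johnson on the $y_i$; after totalisation these become precisely the bidegrees $|x_i|=(1,(1))$, $|y_i|=(2,(2))$, $|w|=(0,(1))$, $|v|=(1,(2))$ of the theorem. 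Finally, the splitting~\eqref{eq:homologydecomposition} from the proof of Theorem~\ref{thm:maintheoremwithoutmap} separates the polynomial part $\Q[y_i,w]$ from the exterior part $\Lambda[x_i,v]$, the tensor product being made $\Gamma_{g,1}$-equivariant precisely by imposing the Johnson action on the $y_i$; this identifies $H^{*,*}(R_1,d_R)$ with $\Q[y_1,\dots,y_{2g},w]\otimes H^{*,*}(\Lambda[x_1,\dots,x_{2g},v],d(v)=2\omega)$ as weighted bigraded $\Gamma_{g,1}$-algebras, and extracting the bidegree $(i,(n))$ part finishes the proof.

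Because all the substantial work sits in the earlier sections, I do not expect a serious obstacle here; the one point that needs genuine care is the grading bookkeeping — tracking the two cohomological degrees and the weight simultaneously, and in particular verifying that the $\ell$-dependent degree shift produced by Theorem~\ref{thm:maintheoremforgeneralmanifolds} is exactly absorbed by the reindexing in Remark~\ref{lem:allalgebrasSarethesame}, so that the final answer is the single algebra $R_1$ rather than an $\ell$-shifted variant. The degenerate case $n=0$ must be handled separately, since the mapping-space argument only sees positive $n$: there $C_0(\Sigma_{g,1})=\mathrm{pt}$ and correspondingly the bidegree-$(0,(0))$ part of the target algebra is just $\Q\cdot 1$ (every other generator carries positive weight), so the two sides agree.
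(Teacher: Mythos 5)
Your proposal is correct and follows the paper's own argument essentially verbatim: apply Theorem \ref{thm:maintheoremforgeneralmanifolds} to $\Sigma_{g,1}$, substitute Theorem \ref{thm:maintheoremwithoutmap} to get $H^{i+2(m-1)n,(n)}(R_m,d_R)$, absorb the degree shift via Remark \ref{lem:allalgebrasSarethesame} to land in $H^{i,(n)}(R_1,d_R)$, and unwind the definition of $R_1$ (with the decomposition \eqref{eq:homologydecomposition} giving the Johnson action on the $y_i$). Your extra care with the $\ell$-shift bookkeeping and the trivial $n=0$ case is fine and does not change the substance.
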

\begin{proof}
From Theorem \ref{thm:maintheoremforgeneralmanifolds}, we have isomorphism 
\begin{equation}
    H^i(C_n(\Sigma_{g,1}))\cong H^{i+2m'n}(\map_{\partial}(\Sigma_{g,1},S^{2(m'+1)}_{\Q}))^{(n)}
\end{equation}
for any $m'\ge 1$.
The right hand side is, by Theorem \ref{thm:maintheoremwithoutmap} and Definition \ref{def:algebraR}, isomorphic to
$$H^{i+2m'n,(n)}\big(R_{m'+1},d_R\big),$$
which, by Remark \ref{lem:allalgebrasSarethesame}, is isomorphic to $H^{i,(n)}\big(R_1,d_R\big)$.  All these isomorphisms are $\Gamma_{g,1}$-equivariant.
Finally, by treating the weighting as a usual grading, the bigraded algebra $H^{*,*}\big(R_1,d_R\big)$ is by definition the weighted graded algebra
$$\Q[y_1,...,y_{2g},w]\otimes H^{*,*}(\Lambda[x_1,...,x_{2g},v], d(v)=2\omega)$$ with the desired $\Gamma_{g,1}$-action.
\end{proof}

\begin{remark}
We can drop the generator $w$, write $A^{i,(\le n)}=\oplus_{j\le n} A^{i,(j)}$ and rephrase the theorem as the isomorphism of $\Gamma_{g,1}$-representations
\begin{equation}
    H^{\bullet}(C_n(\Sigma_{g,1}))\cong\Big(\Q[y_1,...,y_{2g}]\otimes H^{*,*}(\Lambda[x_1,...,x_{2g},v],  d(v)=2\omega)\Big)^{\bullet,(\le n)}
    \label{eq:truncatedringisomorphism}
\end{equation}
\end{remark}
\begin{conjecture}\label{conj:ringstructure} This is an isomorphism of graded $\Gamma_{g,1}$-algebras with the truncated algebra structure on the right.
\end{conjecture}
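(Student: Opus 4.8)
The content still to be established is that the additive, $\Gamma_{g,1}$-equivariant isomorphism \eqref{eq:truncatedringisomorphism} intertwines the \emph{cup product} on $H^{\bullet}(C_n(\Sigma_{g,1}))$ with the truncated product on $A^{\bullet,(\le n)}$, where $A=\Q[y_1,\dots,y_{2g}]\otimes H^{*,*}(\Lambda[x_1,\dots,x_{2g},v],d)$ and the truncated structure is the quotient ring $A/A^{(>n)}$ by the ideal of elements of weight $>n$. The principal conceptual difficulty is that the isomorphism of Theorem \ref{thm:maintheoremsurfaces} is built from the weight-decomposition of $H^{*}(\mapd(\Sigma_{g,1},S^{2m}_{\Q}))$, and cup product there \emph{adds} weights. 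Consequently the scanning argument only records the weight-additive product $H^{*}(C_k)\otimes H^{*}(C_l)\to H^{*}(C_{k+l})$ (the weight-$(k+l)$ component of the product on the mapping space), and says nothing a priori about the internal cup product of a single $C_n(\Sigma_{g,1})$, which is precisely what the conjecture concerns. Extracting the internal product therefore requires genuinely new input beyond the scanning isomorphism.

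The plan is to compare $H^{\bullet}(C_n(\Sigma_{g,1}))$, \emph{with its cup product}, against an independently constructed commutative model, and then to identify that model's cohomology ring with $A^{\bullet,(\le n)}$. Concretely I would use a commutative cochain model for $C_n(\Sigma_{g,1})$ --- for instance the $\mathfrak{S}_n$-invariants of a Cohen--Taylor/Kriz-type CDGA for the ordered space $F_n(\Sigma_{g,1})$, or the factorization-homology model of Knudsen --- each of which computes $H^{*}(C_n)$ \emph{together with} its cup product. On such a model there is a natural polynomial-degree (equivalently, number-of-points) filtration, which I would match with the weight grading $(*)$, so that its associated graded recovers the additive statement of Theorem \ref{thm:maintheoremsurfaces}. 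The comparison ring map $A\to H^{*}(C_n)$ is then defined by sending $x_i\in H^1$, $y_i\in H^2$ and the secondary generator $v$ (the cochain with $d(v)=2\omega$) to their evident geometric representatives, with $A^{(>n)}\mapsto 0$ forced because any monomial of weight $>n$ involves more than $n$ points and cannot be supported on an $n$-point configuration.

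To finish, I would show that this ring map coincides with the additive isomorphism already in hand; then, since Theorem \ref{thm:maintheoremsurfaces} equates total dimensions in each bidegree, a filtered map that is an iso on associated graded is an iso, so the ring map is an isomorphism. Equivariance is inherited as soon as the chosen model carries the $\Gamma_{g,1}$-action compatibly: the symplectic action on the $x_i$ is visible directly from $H^1(\Sigma_{g,1})$, while the Johnson action on the $y_i$ must be matched by tracking the crossed homomorphism $\xi$ through the model, exactly as in Theorem \ref{thm:mapsfromsurfaces}.

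The hard part will be the reconciliation of the two gradings and the behaviour of the non-closed generator $v$. At the cochain level $v$ is not a class but a primitive with $d(v)=2\omega$, so matching it multiplicatively demands an explicit CDGA in which the relation $d(v)=2\omega$ and the products $v\cdot x_i$ and $v\cdot y_i$ are controlled simultaneously; this is precisely where the weight-additive product of the scanning picture and the internal cup product must be shown to agree after truncation. I expect the vanishing $A^{(>n)}\mapsto 0$ --- that cup products overshooting weight $n$ genuinely vanish in $H^{*}(C_n)$, rather than merely dropping in the weight filtration --- to be the crux, requiring either a support/dimension argument on the $2n$-dimensional manifold $C_n(\Sigma_{g,1})$ or a direct appeal to the multiplicative structure of the comparison model; and verifying $\Gamma_{g,1}$-equivariance of that model, with the Johnson action on the $y_i$, will need the same care as in Sections \ref{sec:mapsfromwedges}--\ref{sec:mapsrelboundary}.
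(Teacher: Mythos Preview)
The statement you are attempting to prove is labelled \emph{Conjecture} in the paper and is left unproven there; the paper contains no proof to compare against. The authors explicitly flag in the remark preceding Theorem~\ref{thm:intromaintheoremsurfaces} that they make no claims about the ring structure of $H^*(C_n(M))$, and Conjecture~\ref{conj:ringstructure} is precisely the open question they record.

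Your proposal is therefore not a reproduction of an existing argument but an outline of a possible attack on an open problem. As such it is reasonable: you correctly diagnose the essential obstruction --- that the scanning isomorphism only sees the weight-additive ``external'' product $H^*(C_k)\otimes H^*(C_l)\to H^*(C_{k+l})$ and not the internal cup product on a fixed $C_n$ --- and you propose to circumvent it via an independent multiplicative model (Cohen--Taylor/Kriz invariants or Knudsen's factorization-homology CDGA). That is a sensible strategy. However, what you have written is a plan, not a proof: the two steps you yourself flag as ``the hard part'' and ``the crux'' (matching the non-closed generator $v$ multiplicatively in the comparison model, and showing that products of weight $>n$ genuinely vanish in $H^*(C_n)$ rather than merely dropping in filtration) are not carried out, and the $\Gamma_{g,1}$-equivariance of the comparison CDGA with the Johnson action on the $y_i$ is asserted rather than established. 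Until those steps are completed, the conjecture remains open.
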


\subsection{The action of the Johnson filtration}
We determine how the Johnson filtration $J(i)\subset\Gamma_{g,1}$ (see \eqref{eq:JohnsonFiltration}) acts on $H^*(C_n(\Sigma_{g,1}))$.

\begin{proposition}
      The action of $J(2)$ on the representation $J$ is trivial.
\end{proposition}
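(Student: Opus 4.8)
Write $\pi:=\pi_1(\Sigma_{g,1})$. The plan is to read the $\Gamma_{g,1}$-action on $J$ off the defining formula of Definition~\ref{def:johnsonaction} and reduce the statement to the vanishing of the content $c$ on the second lower central subgroup $\pi^{(2)}$. First I would recall that, as a vector space, $J=H\oplus\Lambda^2H$, with $\Lambda^2H$ spanned by the products $x_ix_j$ and $H$ by the classes $y_i$, and that $\phi\in\Gamma_{g,1}$ acts on the $x_ix_j$ through the symplectic action on $\Lambda^2H$ and on the $y_i$ by $\phi\cdot y_i=\{[\phi](e_i)\}_y+\{\xi(\phi)(e_i)\}_x$. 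Since $\pi^{(2)}\subseteq\pi^{(1)}$ there is a surjection $\pi/\pi^{(2)}\twoheadrightarrow\pi/\pi^{(1)}$, hence $J(2)\subseteq J(1)=I_{g,1}$; so any $\phi\in J(2)$ acts trivially on $H$, hence on $\Lambda^2H$, and satisfies $[\phi](e_i)=e_i$. Thus $\phi$ acts trivially on $J$ precisely when $\xi(\phi)=0$, i.e.\ when $c(\phi(\alpha_i))=0$ for $i=1,\dots,2g$.

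Next I would express $\xi$ on the Torelli group through the ``twist'' $\delta_i:=\phi(\alpha_i)\alpha_i^{-1}$. For $\phi\in I_{g,1}$ one has $[\phi(\alpha_i)]=e_i$, so $\delta_i$ lies in $\pi^{(1)}=[\pi,\pi]$, and Proposition~\ref{prop:propertiesofcontent}(1),(3) give
\[
c(\phi(\alpha_i))=c(\delta_i\alpha_i)=c(\delta_i)+c(\alpha_i)+[\delta_i]\wedge[\alpha_i]=c(\delta_i),
\]
since $c(\alpha_i)=0$ and $[\delta_i]=0$. It then remains to check that $c$ annihilates $\pi^{(2)}$: by Proposition~\ref{prop:propertiesofcontent}(3) the restriction $c|_{\pi^{(1)}}\colon\pi^{(1)}\to\Lambda^2H$ is a group homomorphism (the correction term $[a]\wedge[b]$ vanishes when $a,b$ lie in the commutator subgroup), and on the generators $[a,w]$ of $\pi^{(2)}=[\pi,\pi^{(1)}]$ Proposition~\ref{prop:propertiesofcontent}(4) gives $c([a,w])=2[a]\wedge[w]=0$ because $[w]=0$. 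Hence $c$ vanishes on a generating set of $\pi^{(2)}$, so on all of $\pi^{(2)}$. Finally, $\phi\in J(2)$ acts as the identity on $\pi/\pi^{(2)}$, so $\delta_i\in\pi^{(2)}$ and therefore $\xi(\phi)(e_i)=c(\delta_i)=0$ for every $i$; thus $\xi(\phi)=0$ and $\phi$ acts trivially on $J$.

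The argument is essentially bookkeeping. The one point to handle with a little care is the middle step: correctly separating the ``diagonal'' symplectic contribution to the $J(2)$-action on $J$ (trivial because $J(2)\subseteq I_{g,1}$) from the ``off-diagonal'' contribution recorded by the crossed homomorphism $\xi$, and then noting that $c$ — although merely a crossed-homomorphism-type invariant on all of $\pi$ — does restrict to an honest homomorphism on $\pi^{(1)}$ which kills $\pi^{(2)}$. I do not expect any genuine obstacle.
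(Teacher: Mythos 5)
Your proof is correct and follows essentially the same route as the paper's: reduce, via $J(2)\subseteq I_{g,1}$, to showing $\xi(\phi)=0$, write $\phi(\alpha_i)$ as $\alpha_i$ times an element of $\pi_1(\Sigma_{g,1})^{(2)}$, and kill the content using Proposition \ref{prop:propertiesofcontent}. Your only (welcome) refinement is making explicit that $c$ restricts to a homomorphism on the commutator subgroup, so it vanishes on all of $\pi_1(\Sigma_{g,1})^{(2)}$ and not merely on single double commutators, a point the paper's proof leaves implicit.
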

\begin{proof}
Since $J(2)\subset J(1)=I_{g,1}$, $J(2)$ acts on each of $H,\Lambda^2H$ trivially, therefore by the definition of $J$, it suffices to check that for all $\phi\in J(2)$, $\xi(\phi)(y)$ vanishes for $y\in H$. It suffices to check on the basis $y_i$, $i=1,...,2g$. 

From the definition of $J(2)$, a mapping class $\phi\in J(2)$ maps every $g\in \pi_1(\Sigma_{g,1})$ to $\phi(g)=gh$ for some $h\in \pi_1(\Sigma_{g,1})^{(2)}$.

Then by definition \ref{def:xi}, $\xi(\phi,y_i)=c(\phi(a_i))=c(a_ih)$ some $h\in (\Z^{*2g})^{(2)}$, i.e. a double commutator. From proposition \ref{prop:propertiesofcontent}, we get  $c(a_ih)=c(a_i)+c(h)+[a_i]\wedge [h]$, $c(a_i)=0$ , $c(h)=0$ and $[h]=0$ because $h$ is a commutator.
\end{proof}
\begin{corollary}
The action of $J(2)$ on $H^*(C_n(\Sigma_{g,1}))$ is trivial for all $n\ge 0$ and $g\ge 0$. \label{cor:actionofJ2trivial}
\end{corollary}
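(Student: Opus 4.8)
The strategy is to reduce the statement, via Theorem~\ref{thm:maintheoremsurfaces}, to the Proposition just proved. By that theorem there is a $\Gamma_{g,1}$-equivariant isomorphism identifying $H^i(C_n(\Sigma_{g,1}))$ with the bidegree $(i,(n))$ part of $H^{*,*}(R,d)$, where $R=\Q[y_1,\dots,y_{2g},w]\otimes\Lambda[x_1,\dots,x_{2g},v]$ carries the differential $d(v)=2\omega$ together with the $\Gamma_{g,1}$-action which is symplectic on the $x_i$, Johnson on the $y_i$, and trivial on $w$ and $v$. This action is by automorphisms of the differential graded algebra, so it suffices to prove that $J(2)$ fixes a set of algebra generators of $R$: triviality then propagates to all of $R$, to the cycles and boundaries of $(R,d)$, hence to $H^{*,*}(R,d)$ and finally to $H^i(C_n(\Sigma_{g,1}))$.

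As a $\Q$-algebra, $R$ is generated by the classes $x_1,\dots,x_{2g},y_1,\dots,y_{2g},w,v$, and I would check that $J(2)$ fixes each of these in turn. The generators $w$ and $v$ carry the trivial $\Gamma_{g,1}$-action by definition, so there is nothing to do. The $x_i$ span the standard symplectic representation $H$, on which all of $I_{g,1}\supseteq J(2)$ acts trivially. For the $y_i$, the point is that the weight-$2$ part of $R$ in which they sit is, together with the monomials $x_ix_j$, a copy of the Johnson representation $J=H\oplus\Lambda^2H$ (the basis-free description in the introduction, or Remark~\ref{rmk:alternativedescriptionofR}), with the $y_i$ forming a lift of the $H$-quotient; the Proposition above asserts exactly that $J(2)$ acts trivially on $J$, so in particular each $y_i$ is fixed.

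Putting these together, $J(2)$ fixes every algebra generator of $R$, therefore acts trivially on $R$, on $H^{*,*}(R,d)$, and, by the $\Gamma_{g,1}$-equivariant isomorphism of Theorem~\ref{thm:maintheoremsurfaces}, on $H^i(C_n(\Sigma_{g,1}))$ for all $i,n\ge 0$; the case $g=0$ is vacuous since $\Gamma_{0,1}$ is trivial. I do not expect any real obstacle here: the entire content of the argument is the vanishing of $\xi$ (equivalently, of the content) on $J(2)$, which was the point of the preceding Proposition and ultimately rests on the fact that the content vanishes on double commutators, Proposition~\ref{prop:propertiesofcontent}(5).
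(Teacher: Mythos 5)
Your proposal is correct and follows essentially the same route as the paper: both reduce to the Proposition that $J(2)$ acts trivially on the Johnson representation $J$ (together with the trivial action on $H$, $w$ and $v$), and then propagate triviality through the algebra $R$ and its cohomology to $H^i(C_n(\Sigma_{g,1}))$ via Theorem~\ref{thm:maintheoremsurfaces}. The paper phrases this propagation as $H^*(R_1,d_R)$ being a subquotient of tensor powers of $J\oplus H\oplus\Q\oplus\Q$ (Remark~\ref{rmk:alternativedescriptionofR}), whereas you argue via fixing algebra generators, which amounts to the same thing.
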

\begin{proof}
The vector space $H^i(C_n(\Sigma_{g,1});\Q)$ is a $\Gamma_{g,1}$-subrepresentation of $H^*(R_1,d_R)$ which is a subquotient of the free tensor power on the direct sum of $\Gamma_{g,1}$-representation $J\oplus H\oplus \Q\oplus\Q$ with appropriate gradings (see Remark \ref{rmk:alternativedescriptionofR}) all of which have trivial action of $J(2)$ .
\end{proof}
\begin{remark}
This is in constrast with $J(2)$ acting non-trivially on $H^*(F_n(\Sigma_{g,1}))$ for \textit{ordered} configuration spaces, see \cite{bianchi2021mapping}.
\end{remark}

On the other hand, the $\Gamma_{g,1}$-representation $H^2(C_2(\Sigma_{g,1}))$ was already known to be non-symplectic by \cite{Bianchi2020}. We compute this representation explicitly and conclude this result independently.

\begin{definition}
    The \textit{reduced Johnson representation} is the quotient $\widetilde{J}:=J/\langle \omega \rangle$, fitting in an extension of $\Gamma_{g,1}$-representations
\begin{equation}
    \begin{tikzcd}[column sep=small]
        0\rar &\Lambda^2H/\langle \omega \rangle\rar["i"]& \widetilde{J}\rar & H\rar& 0.
    \end{tikzcd}
    \label{eq:extensionJtilde}
\end{equation}
\end{definition}

\begin{proposition}
If $g\ge 2$, the reduced Johnson representation $\widetilde{J}$ is not symplectic and the extension \eqref{eq:extensionJtilde} is not $\Gamma_{g,1}$-split. In particular, $J$ is also not symplectic.
\label{prop:reducedJnonsymplectic}
\end{proposition}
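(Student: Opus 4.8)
The plan is to prove the crucial assertion first — that $\widetilde J$ is not symplectic, i.e.\ that $I_{g,1}$ acts non-trivially on it — and then to obtain the other two statements formally. As a vector space $\widetilde J = H\oplus(\Lambda^2H/\langle\omega\rangle)$, with $\Lambda^2H/\langle\omega\rangle$ the sub and $H$ the quotient in \eqref{eq:extensionJtilde}. Reading off the action on the generators $y_i$ and $x_ix_j$ from Definition \ref{def:johnsonaction}, one sees that an element $\phi\in I_{g,1}$, which acts trivially on $H$ and on $\Lambda^2H/\langle\omega\rangle$, acts on $\widetilde J$ by $(h,\bar z)\mapsto\bigl(h,\,\bar z+\bar\xi(\phi)(h)\bigr)$, where $\bar\xi(\phi)\in\hom(H,\Lambda^2H/\langle\omega\rangle)$ is $\xi(\phi)$ followed by the projection $\Lambda^2H\to\Lambda^2H/\langle\omega\rangle$. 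Thus $\widetilde J$ is symplectic if and only if $\xi(\phi)(H)\subseteq\langle\omega\rangle$ for every $\phi\in I_{g,1}$, and it suffices to exhibit one $\phi\in I_{g,1}$ for which this fails.

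To produce such a $\phi$, I first identify $\xi|_{I_{g,1}}$, up to the factor $2$, with the Johnson homomorphism $\tau\colon I_{g,1}\to\hom(H,\Lambda^2H)$ of \cite{JohnsonAnAbelianQuotient}: for $\phi\in I_{g,1}$ one has $\phi(\alpha_i)=h_i\alpha_i$ with $h_i\in\pi_1(\Sigma_{g,1})^{(1)}$, so by Proposition \ref{prop:propertiesofcontent}(1),(3), $\xi(\phi)(e_i)=c(\phi(\alpha_i))=c(h_i)$; moreover $c$ restricted to $\pi_1(\Sigma_{g,1})^{(1)}$ is a homomorphism by Proposition \ref{prop:propertiesofcontent}(3) and sends $[a,b]\mapsto 2[a]\wedge[b]$ by Proposition \ref{prop:propertiesofcontent}(4), hence equals $2\tau(\phi)$ on the generators. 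Next I recall the classical fact that, under the identification $\hom(H,\Lambda^2H)\cong H^\vee\otimes\Lambda^2H\cong H\otimes\Lambda^2H$ afforded by $\lambda$, the image of $\tau$ spans over $\Q$ the subrepresentation $\Lambda^3H$, embedded by $a\wedge b\wedge c\mapsto a\otimes(b\wedge c)+b\otimes(c\wedge a)+c\otimes(a\wedge b)$; for $g\ge 3$ this is Johnson's surjectivity theorem, and for $g=2$ it holds because $\Lambda^3H$ is $\sptg$-irreducible and $\im\tau\otimes\Q$ is a non-zero submodule of it (a suitable genus-one bounding-pair map has non-zero Johnson invariant).

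It then remains to show that the composite $\Lambda^3H\hookrightarrow\hom(H,\Lambda^2H)\to\hom(H,\Lambda^2H/\langle\omega\rangle)$ is non-zero for $g\ge 2$; any element of $\im\tau$ mapping to something non-zero then yields the desired $\phi$. For $g\ge 3$ this is immediate, since $\dim\Lambda^3H=\binom{2g}{3}>2g=\dim\hom(H,\langle\omega\rangle)$. For $g=2$ I evaluate on the explicit element $e_1\wedge e_2\wedge e_3\in\Lambda^3H$ (which exists since $2g\ge 4$): unwinding the embedding above together with the identification $v\mapsto\lambda(v,-)$ gives the homomorphism $f\colon H\to\Lambda^2H$ with $f(e_1)=e_1\wedge e_3$ (and $f(e_2)=e_2\wedge e_3$, $f(e_4)=e_1\wedge e_2$, vanishing on the remaining basis vectors), and $e_1\wedge e_3\notin\langle\omega\rangle=\langle e_1\wedge e_2+\dots+e_{2g-1}\wedge e_{2g}\rangle$, so the image of $f$ in $\Lambda^2H/\langle\omega\rangle$ is non-zero. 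This finishes the proof that $\widetilde J$ is not symplectic. Finally, if \eqref{eq:extensionJtilde} were $\Gamma_{g,1}$-split then $\widetilde J\cong H\oplus\Lambda^2H/\langle\omega\rangle$ as $\Gamma_{g,1}$-representations and hence symplectic, a contradiction; and since $J\twoheadrightarrow\widetilde J$ is $\Gamma_{g,1}$-equivariant, $J$ symplectic would force $\widetilde J$ symplectic, so $J$ is not symplectic either.

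I expect the genuine difficulty to be the case $g=2$, where $\dim\Lambda^3H=2g=\dim\hom(H,\langle\omega\rangle)$, so the clean dimension count is unavailable and one must both justify surjectivity of $\tau\otimes\Q$ (via the irreducibility argument above) and carry out the explicit computation showing the composite into $\hom(H,\Lambda^2H/\langle\omega\rangle)$ does not vanish. Everything else is formal once the identification of $\xi|_{I_{g,1}}$ with the Johnson homomorphism is in place.
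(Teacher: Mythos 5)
Your proposal is correct and follows essentially the same route as the paper: reduce both remaining claims to showing $\widetilde{J}$ is not symplectic, identify $\xi|_{I_{g,1}}=2\tau$ using the properties of the content, and invoke Johnson's computation $\im\tau=\Lambda^3H\subset\hom(H,\Lambda^2H)$, which is not contained in $\hom(H,\Q\langle\omega\rangle)$. Your extra care at $g=2$ (rational surjectivity of $\tau$ via irreducibility, and the explicit check that the embedded $\Lambda^3H$ escapes $\hom(H,\Q\langle\omega\rangle)$) is a sound refinement of the paper's terser dimension remark, which as written is only decisive for $g\ge 3$.
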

\begin{proof}
If $J$ were symplectic, then so would be its quotient $\widetilde{J}$. 
If the sequence were $\Gamma_{g,1}$-split, then $\widetilde{J}$ would be a direct sum of symplectic representations, thus symplectic. So it suffices then to prove that $\widetilde{J}$ is not symplectic. It is enough to show that there are $\phi\in I_{g,1}$ and $y\in H$ so that $\xi(\phi,y)\not\in \Q\langle \omega \rangle$. We will prove that the image of $\xi|_{I_{g,1}}:I_{g,1}\to \hom(H,\Lambda^2H)$ is not contained in $\hom(H,\Q\langle \omega \rangle)$. 

We consider the crossed homomorphism $\xi$ with homomorphism $\tau:I_{g,1}\to \hom(H_{\Z},\Lambda^2H_{\Z})$ from \cite{JohnsonAnAbelianQuotient}. Let $a_1,...,a_{2g}\in \pi:=\pi_1(\Sigma_{g,1})$ and $x_1,...,x_{2g}\in H_1(\Sigma_{g,1};\Z)$ be the standard bases and define homomorphism $j:[\pi,\pi]\to \Lambda^2H_{\Z}$ the map $j([a,b])=[a]\wedge [b]$. Then $\tau$ is defined for $\phi\in I_{g,1}$, by  \begin{equation}
    \tau(\phi)(x_i)=j(\phi(a_i)a_i^{-1}).
\end{equation}
Now by definition of $I_{g,1}$, for each $i=1,...,n$ there is a conjugator $c=[a,b]\in [\pi,\pi]$ a conjugator so that $\phi(a_i)=a_ic$. Then  \begin{equation}
    \tau(\phi)(x_i)=j(a_ica_i^{-1})=j([a_i,c]c)=j([a_i,c])+j(c)=[a]\wedge [b]
\end{equation}
since $[c]=0$, being a commutator.
On the other hand, \begin{equation}
    \xi(\phi)(x_i)=c(\phi(a_i))=c(ca_i)=c(c)+c(a_i)+[c]\wedge[a_i]=c(c)=2[a]\wedge [b],
\end{equation} by Proposition \ref{prop:propertiesofcontent}. Extending linearly we conclude $\xi|_{I_{g,1}}=2\tau$. 

Finally, from \cite{JohnsonAnAbelianQuotient}, the image of $\tau$ is $\Lambda^3H\subset H_{\Z}\otimes \Lambda^2H_{\Z}\cong \hom(H_{\Z},\Lambda^2H_{\Z})$. Thus $\im(\xi|_{I_{g,1}})=\Lambda^3H\subset H\otimes \Lambda^2H$, which is not contained in $H\otimes \omega$, provided that $g\ge 2$, so $\dim(\Lambda^3H)={2g \choose 3}>1$.
\end{proof}

\begin{proposition}
      As $\Gamma_{g,1}$ representations, $H^2(C_2(\Sigma_{g,1}))\cong \widetilde{J}$. In particular, $H^2(C_2(\Sigma_{g,1}))$ is not symplectic. 
      \label{prop:torelliactionnontrivial}
\end{proposition}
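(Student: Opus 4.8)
The plan is to read $H^2(C_2(\Sigma_{g,1}))$ off Theorem~\ref{thm:maintheoremsurfaces} with $i=n=2$: it is the bidegree $(2,(2))$ part of the $\Gamma_{g,1}$-algebra $\Q[y_1,\dots,y_{2g},w]\otimes H^{*,*}(\Lambda[x_1,\dots,x_{2g},v],d(v)=2\omega)$, where $|x_i|=(1,(1))$, $|y_i|=(2,(2))$, $|w|=(0,(1))$ and $|v|=(1,(2))$. Everything then reduces to a short bookkeeping computation of this one bidegree, followed by a comparison of $\Gamma_{g,1}$-actions with the defining extension~\eqref{eq:extensionJtilde} of $\widetilde{J}$.

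First I would compute the relevant bidegrees of the exterior factor $H^{*,*}(\Lambda[x_1,\dots,x_{2g},v],d)$. Each generator of $\Lambda[x_1,\dots,x_{2g},v]$ has strictly positive weight, so $H^{j,(0)}=0$ for $j>0$ while $H^{0,(0)}=\Q$; there are no cochains of bidegree $(2,(1))$, so $H^{2,(1)}=0$; and in bidegree $(2,(2))$ the cochains are $\Lambda^2\{x_1,\dots,x_{2g}\}\cong\Lambda^2H$, on which $d$ vanishes, while the only incoming differential is $d(v)=2\omega$ out of the one-dimensional space $\Q\{v\}$ in bidegree $(1,(2))$. Hence $H^{2,(2)}(\Lambda[x,v],d)\cong\Lambda^2H/\langle\omega\rangle$. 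Among monomials in the $y_i$ and $w$ of total degree at most $2$ and weight at most $2$, only $1$ (in bidegree $(0,(0))$) and the $y_i$ (in bidegree $(2,(2))$) can pair with a nonzero bidegree of the exterior factor to land in bidegree $(2,(2))$; pairing $1$ with $H^{2,(2)}$ and the $y_i$ with $H^{0,(0)}$ gives, as a vector space,
\begin{equation*}
    H^2(C_2(\Sigma_{g,1}))\;\cong\;\langle y_1,\dots,y_{2g}\rangle\;\oplus\;\Lambda^2H/\langle\omega\rangle,
\end{equation*}
of dimension $2g+\binom{2g}{2}-1=\dim J-1=\dim\widetilde{J}$.

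Next I would identify this with $\widetilde{J}=J/\langle\omega\rangle$ as a $\Gamma_{g,1}$-representation. On the summand $\Lambda^2H/\langle\omega\rangle$ the action is the symplectic quotient action, which is exactly the sub-representation in~\eqref{eq:extensionJtilde}. On $\langle y_1,\dots,y_{2g}\rangle$ the Johnson action (Definition~\ref{def:algerbaswithJohnsonaction}) is $\phi\cdot y_i=\phi\cdot_{\sptg}y_i+\xi(\phi)(y_i)$ with $\xi(\phi)(y_i)\in\Lambda^2\{x_1,\dots,x_{2g}\}$; inside $H^{2,(2)}$ the $x_jx_k$ are identified with their classes in $H^{2,(2)}(\Lambda[x,v],d)\cong\Lambda^2H/\langle\omega\rangle$, so this correction term becomes its image in $\Lambda^2H/\langle\omega\rangle$. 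This is precisely the twisting in the extension~\eqref{eq:extensionJtilde} defining $\widetilde{J}$. Since $\langle\omega\rangle\subset J$ is spanned by the $\Gamma_{g,1}$-invariant tensor $\omega$, the above is literally the reduction of the Johnson representation $J$ modulo $\langle\omega\rangle$, and we obtain a $\Gamma_{g,1}$-equivariant isomorphism $H^2(C_2(\Sigma_{g,1}))\cong\widetilde{J}$. The final assertion that $H^2(C_2(\Sigma_{g,1}))$ is not symplectic is then immediate from Proposition~\ref{prop:reducedJnonsymplectic}.

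The main obstacle is the $\Gamma_{g,1}$-equivariance, not the dimension count. One must keep careful track of the fact that the ``correction term'' $\xi(\phi)(y_i)$ of the Johnson action---a priori a quadratic expression in the $x_i$---descends to a well-defined element of $\Lambda^2H/\langle\omega\rangle$ in cohomology precisely because $2\omega$ has become a boundary, and that this descent reproduces the defining extension~\eqref{eq:extensionJtilde} of $\widetilde{J}$; equivalently, that the degree/weight $(2,(2))$ part of the underlying bigraded algebra is the Johnson representation $J$ on the nose, with the differential vanishing on it and the unique incoming differential having image exactly $\langle\omega\rangle$.
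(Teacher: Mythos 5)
Your computation is correct and is essentially the paper's argument spelled out: the paper's proof is a one-line ``immediate by inspection'' of the bidegree $(2,(2))$ part of the algebra $R_1$ (equivalently of Theorem \ref{thm:maintheoremsurfaces}), together with Proposition \ref{prop:reducedJnonsymplectic}, which is exactly what you carry out — identifying $H^{2,(2)}$ as the extension of $\langle y_1,\dots,y_{2g}\rangle\cong H$ by $\Lambda^2H/\langle\omega\rangle$ with twisting $\xi$ reduced mod $\omega$, i.e.\ $\widetilde{J}$. The only caveat is that the non-symplecticity conclusion, as in Proposition \ref{prop:reducedJnonsymplectic}, requires $g\ge 2$, which you implicitly inherit by citing that proposition.
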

\begin{proof}
Immediate by inspection of the definition of $R_1$ and $S_1$, and proposition \ref{prop:reducedJnonsymplectic}.
\end{proof}

\subsection{A comment on closed surfaces}\label{sec:closedsurfaces}
Let $\Sigma_g$ be a closed oriented genus $g$ surface and $\Gamma_g$ its mapping class group.
As briefly mentioned in the introduction, in \cite{looijenga2020torelli} it is shown that the $\Gamma_{g}$ representation $H^3(C_3(\Sigma_g))$ is non-symplectic for $g\ge 3$. We briefly explain how the same representation arises from results of our paper. 

The inclusion $\Diffd^+(\Sigma_{g,1})\subset \Diff^+(\Sigma_g)$ gives rise to surjection $\Gamma_{g,1}\to \Gamma_{g}$, and so we can speak of $\Gamma_{g,1}$ representations instead. By applying Theorem~\ref{thm:maintheoremforgeneralmanifolds}, $H^3(C_3(\Sigma_g))$ is isomorphic to the $\Gamma_{g,1}$ representation
\begin{equation}
    \widetilde{H}^{3\mu}(\map(\Sigma_g,S^{2m}_{\Q}))^{(3)}.
\end{equation}
We can compute this by analysing the Serre spectral sequence of the evaluation fibration
\begin{equation}\label{eq:evaluationfibrationclosed}
    \mapd(\Sigma_{g,1},S^{2m}_{\Q})\cong\map_*(\Sigma_g,S^{2m}_{\Q})\to \map(\Sigma_g,S^{2m}_{\Q})\to S^{2m}_{\Q},
\end{equation}
and comparing it with the Serre spectral sequence from Section \ref{sec:freemaps} by deformation retracting $\vee_{2g}S^1$ to $\Sigma_{g,1}$. One of the relevant entries of the Serre spectral sequence of \eqref{eq:evaluationfibrationclosed} is then
\begin{align*}
    E^{0,3\mu, (3)}_{\infty}&=\Q\{x_i\wedge x_j\wedge x_k,x_i y_j:i,j,k=1,...,2g\}/\langle \omega \wedge x_i,x_iy_j=x_jy_i\rangle\\
    &\cong \Lambda^3H/(\omega\wedge H)\oplus \Sym^2H
\end{align*} The direct sum here is not $\Gamma_{g,1}$-equivariant, but rather, the action is again the symplectic on the $x_i$ and the ``Johnson on the $y_i$''. We defer a more detailed elaboration of this to future work.

\bibliographystyle{amsalpha}

\bibliography{biblio}

\providecommand{\bysame}{\leavevmode\hbox to3em{\hrulefill}\thinspace}
\providecommand{\MR}{\relax\ifhmode\unskip\space\fi MR }
\providecommand{\MRhref}[2]{%
  \href{http://www.ams.org/mathscinet-getitem?mr=#1}{#2}
}
\providecommand{\href}[2]{#2}
\begin{thebibliography}{BMW21}

\bibitem[BC88]{bodigheimer88rationalcohomologyofsurfaces}
Carl-Friedrich B{\"o}digheimer and Frederick~R. Cohen, \emph{Rational
  cohomology of configuration spaces of surfaces}, Algebraic Topology and
  Transformation Groups (Berlin, Heidelberg) (Tammo tom Dieck, ed.), Springer
  Berlin Heidelberg, 1988, pp.~7--13.

\bibitem[BCT89]{BODIGHEIMER1989111generalconfigurations}
Carl-Friedrich Bödigheimer, Frederick~R. Cohen, and Laurence Taylor, \emph{On
  the homology of configuration spaces}, Topology \textbf{28} (1989), no.~1,
  111--123.

\bibitem[Bia20]{Bianchi2020}
Andrea Bianchi, \emph{Splitting of the homology of the punctured mapping class
  group}, Journal of Topology \textbf{13} (2020), no.~3, 1230–1260.

\bibitem[BMW21]{bianchi2021mapping}
Andrea Bianchi, Jeremy Miller, and Jennifer C.~H. Wilson, \emph{Mapping class
  group actions on configuration spaces and the {J}ohnson filtration}, 2021,
  p.~arXiv:2104.09253.

\bibitem[CK16]{kassabovHopfalgberas}
Jim Conant and Martin Kassabov, \emph{{Hopf algebras and invariants of the
  Johnson cokernel}}, Algebraic and Geometric Topology \textbf{16} (2016),
  no.~4, 2325 -- 2363.

\bibitem[Dar19]{DarneAndreadakisIAn}
Jacques Darné, \emph{{On the Andreadakis Problem for Subgroups of $IA_n$}},
  International Mathematics Research Notices (2019).

\bibitem[FH04]{FultonHarrisRepTheory2004}
William Fulton and Joe Harris, \emph{Representation theory a first course}, 1st
  ed. 2004. ed., Readings in Mathematics ; 129, 2004 (eng).

\bibitem[FHT01]{FelixYves2001RHTb}
Yves Felix, Stephen Halperin, and Jean-Claude Thomas, \emph{Rational homotopy
  theory}, Graduate Texts in Mathematics, 205, 2001.

\bibitem[Hat02]{Hatcher}
Allen Hatcher, \emph{Algebraic topology}, Cambridge University Press,
  Cambridge, UK, 2002.

\bibitem[Hat16]{HatcherSSS}
\bysame, \emph{The {S}erre spectral sequence}, [online], 2016.

\bibitem[Joh80]{JohnsonAnAbelianQuotient}
Dennis Johnson, \emph{An abelian quotient of the mapping class group
  {${I}_{g}$}}, Math. Ann. \textbf{249} (1980), no.~3, 225--242.

\bibitem[Laz54]{LazardGroupesNilpotents54}
Michel Lazard, \emph{Sur les groupes nilpotents et les anneaux de {L}ie}, Ann.
  Sci. Ecole Norm. Sup. (3) \textbf{71} (1954), 101--190.

\bibitem[Loo20]{looijenga2020torelli}
Eduard Looijenga, \emph{Torelli group action on the configuration space of a
  surface}, arXiv e-prints (2020), arXiv:2008.10556.

\bibitem[MT14]{Tillmanthorpe2014}
Richard Manthorpe and Ulrike Tillmann, \emph{Tubular configurations:
  equivariant scanning and splitting}, Journal of the London Mathematical
  Society \textbf{90} (2014), no.~3, 940–962.

\bibitem[Pag19]{pagaria2019cohomology}
Roberto Pagaria, \emph{Cohomology of configuration spaces of surfaces}, arXiv
  e-prints (2019), arXiv:1905.04724.

\bibitem[Pir00]{PirashviliHigherHochschild}
Teimuraz Pirashvili, \emph{Hodge decomposition for higher order {H}ochschild
  homology}, Ann. Sci. \'{E}cole Norm. Sup. (4) \textbf{33} (2000), no.~2,
  151--179.

\bibitem[PV19]{vespapowell2019higher}
Geoffrey Powell and Christine Vespa, \emph{Higher hochschild homology and
  exponential functors}, 2019, p.~arXiv:1802.07574.

\bibitem[See11]{Seeliger2011freeloopspace}
Nora Seeliger, \emph{Cohomology of the free loop space of a complex projective
  space}, Topology and Its Applications \textbf{155} (2011).

\bibitem[TW19]{turchin2016hochschildpirashvili}
Victor Turchin and Thomas Willwacher, \emph{Hochschild-{P}irashvili homology on
  suspensions and representations of {${\rm Out}(F_n)$}}, Ann. Sci. \'{E}c.
  Norm. Sup\'{e}r. (4) \textbf{52} (2019), no.~3, 761--795.

\end{thebibliography}

\end{document}